\theoremstyle{plain}
\newtheorem{thm}{Theorem}[section]
\newtheorem{cor}[thm]{Corollary}
\newtheorem{theorem}[thm]{Theorem}
\newtheorem{proposition}[thm]{Proposition}
\newtheorem{lemma}[thm]{Lemma}
\newtheorem{corollary}[thm]{Corollary}
\newtheorem{question}[thm]{Question}
\theoremstyle{definition}
\newcommand{\bB}{{\mathbb{B}}}
\newcommand{\bC}{{\mathbb{C}}}
\newcommand{\bD}{{\mathbb{D}}}
\newcommand{\bS}{{\mathbb{S}}}
\newcommand{\bT}{{\mathbb{T}}}
  \newcommand{\A}{{\mathcal{A}}}
  \newcommand{\B}{{\mathcal{B}}}
  \newcommand{\C}{{\mathcal{C}}}
  \newcommand{\I}{{\mathcal{I}}}
  \newcommand{\J}{{\mathcal{J}}}
  \newcommand{\M}{{\mathcal{M}}}
\renewcommand{\S}{{\mathcal{S}}}
\newcommand{\rC}{\mathrm{C}}
\newcommand{\ep}{\varepsilon}
\newcommand{\eps}{\varepsilon}
\renewcommand{\phi}{\varphi}
\newcommand{\upchi}{{\raise.35ex\hbox{$\chi$}}}
\newcommand{\Ba}{{\mathbf{a}}}
\newcommand{\lip}{\langle}
\newcommand{\rip}{\rangle}
\newcommand{\ip}[1]{\lip #1 \rip}
\newcommand{\ol}{\overline}
\newcommand{\AB}{{\mathrm{A}(\mathbb{B}_d)}}
\newcommand{\AD}{{\mathrm{A}(\mathbb{D})}}
\newcommand{\FOR}{\text{ for }}
\newcommand{\qand}{\quad\text{and}\quad}
\newcommand{\qfor}{\quad\text{for}\quad}
\newcommand{\qforal}{\quad\text{for all}\quad}
\newcommand{\TS}{\operatorname{TS}}
\begin{document}
\title[Ideals in a multiplier algebra]{Ideals in a multiplier algebra on the ball}

\author[R. Clou\^atre]{Rapha\"el Clou\^atre}
\address{Department of Mathematics, University of Manitoba, 186 Dysart Road,
Winnipeg, MB, Canada R3T 2N2}
\email{raphael.clouatre@umanitoba.ca\vspace{-2ex}}
\thanks{The first author was partially supported by an FQRNT postdoctoral fellowship and a start-up grant from the University of Manitoba.}

\author[K.R. Davidson]{Kenneth R. Davidson}
\address{Department of Pure Mathematics, University of Waterloo, 200 University Avenue West, 
Waterloo, ON, Canada N2L 3G1}
\email{krdavids@uwaterloo.ca}
\thanks{The second author is partially supported by an NSERC grant.}

\begin{abstract}
We study the ideals of the closure of the polynomial multipliers on the Drury-Arveson space. 
Structural results are obtained by investigating the relation between an ideal and its weak-$*$ closure, much in the spirit of the corresponding  classical facts for the disc algebra. Zero sets for multipliers are also considered and are deeply intertwined with the structure of ideals. Our approach is primarily based on duality arguments.
\end{abstract}

\subjclass[2010]{46J20, 46E22, }
\keywords{ideals, zero sets, ball algebra, multipliers, Drury-Arveson space}
\maketitle

\vspace{-3ex} 

\section{Introduction} \label{S:intro}å

We study the ideals and zero sets for the algebra $\A_d$ of multipliers on the Drury-Arveson space
which are limits in the multiplier norm of (analytic) polynomials in $d$ variables.
Precise definitions are given in Section \ref{S:prelim}. 
To a great extent the theory parallels the theory for the ball algebra $\AB$, the uniform closure
of the polynomials in $\rC(\ol{\bB_d})$. However, as the multiplier norm is not comparable to the supremum
norm over the ball, there are complications. Our results are based on duality arguments, and rely heavily on recent machinery developed in \cite{CD} to overcome these complications. It should be noted however that our results on zero sets have (somewhat easier) analogues in the uniform algebra case,
and some of these results appear to be new in that context as well. We emphasize this where appropriate.

The initial motivation for our investigation stemmed from multivariate operator theoretic considerations surrounding the functional calculus associated to an absolutely continuous row contraction. An upcoming paper (\cite{abscont}) deals with this topic.
We focus here on purely function theoretic aspects.

We first review the classical theory.
Let $\bB_d\subset \bC^d$ denote the open unit ball and let $\bS_d$ denote its boundary, the unit sphere. 
The ball algebra $A(\bB_d)$ consists of those functions $\phi$ that are holomorphic on $\bB_d$ and continuous on $\ol{\bB_d}$. It is a uniform algebra when equipped with the supremum norm
\[
\|\phi\|_{\infty}=\sup_{z\in \ol{\bB_d}}|\phi(z)|.
\]
By the maximum modulus principle, it may also be considered as a closed subalgebra of $\rC(\bS_d)$ obtained as the closure of the analytic polynomials. 
Furthermore, let $H^\infty(\bB_d)$ denote the algebra of bounded holomorphic functions on $\bB_d$. 
Functions in $H^\infty(\bB_d)$ have radial limits which are defined on $\bS_d$ almost everywhere with respect to the unique rotation invariant Borel probability measure $\sigma$. In particular, we have the isometric inclusion
\[
H^\infty(\bB_d)\subset L^\infty(\bS_d,\sigma).
\]
In fact, $H^\infty(\bB_d)$ is a weak-$*$ closed subalgebra of $L^\infty(\bS_d,\sigma)$, and
thus it is a dual space that carries the induced weak-$*$ topology.

A natural problem is to determine the ideal structure of $\AB$. First consider the single variable case on the unit disc $\bD\subset \bC$.
A function $\omega\in H^\infty(\bD)$ is called inner if $|\omega|=1$ almost everywhere on the circle $\bS_1$ (with respect to Lebesgue measure). 
Let $X_{\omega}\subset \bS_1$ denote the closure of the points across which $\omega$ cannot be continued holomorphically. This is called the support of $\omega$  (see \cite{Hoffman}). The structure of ideals of $A(\bD)$ was unraveled by Carleson and Rudin independently \cite{Carleson57,Rudin57} (see also \cite{Hoffman} for an exposition of these results).

\begin{theorem}[Carleson, Rudin]\label{T:carlesonrudin}
Let $\J\subset A(\bD)$ be a non-trivial closed ideal. Then there exists an inner function $\omega\in H^\infty(\bD)$ and a closed subset $K\subset \bS_1$ of Lebesgue measure $0$ containing $X_{\omega}$ such that
\[
\J = \omega H^\infty(\bD)\cap \I(K) 
\]
where $\I(K)\subset A(\bD)$ is the ideal of functions vanishing on $K$.
\end{theorem}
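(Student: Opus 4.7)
The plan is to extract from $\J$ two canonical objects --- a closed boundary set $K$ and an inner function $\omega$ --- directly verify the easy inclusion, and then obtain the reverse inclusion by a Hahn-Banach duality argument built on a careful description of the annihilator $\J^\perp \subset M(\bS_1)$.

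First I would set $K := \bigcap_{f \in \J}\{z \in \bS_1 : f(z) = 0\}$, the common boundary zero set. Since any nonzero $f \in A(\bD)$ has $\log|f| \in L^1(\bS_1,\sigma)$, its boundary zero set has Lebesgue measure zero, so $K$ is closed of measure zero whenever $\J$ is non-trivial. Next, for each nonzero $f \in \J$ take its inner-outer factorization $f = \omega_f u_f$ in $H^\infty(\bD)$ and let $\omega$ be the greatest common inner divisor of the family $\{\omega_f\}$. By construction, $f \in \omega H^\infty(\bD)$ and $f|_K = 0$ for every $f \in \J$, which gives the easy inclusion
\[
\J \subseteq \omega H^\infty(\bD) \cap \I(K).
\]

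For the reverse inclusion, fix $g \in A(\bD)$ with $g = \omega h$ for some $h \in H^\infty(\bD)$ and $g|_K = 0$. If $g \notin \J$, Hahn-Banach supplies $\mu \in M(\bS_1)$ with $\mu \perp \J$ and $\int g\, d\mu \neq 0$. The crux is then a structural description of the annihilator: every such $\mu$ splits as $\mu = \mu_s + h_\mu\, d\sigma$ where $\mu_s \perp \sigma$ is concentrated on $K$, and $h_\mu$ satisfies an $H^1$-type condition expressing the presence of the common inner factor $\omega$ in $\J$. The concentration of $\mu_s$ on $K$ would be pinned down via the Rudin-Carleson extension theorem, which furnishes members of $\I(K)$ realizing prescribed continuous behavior on $\bS_1 \setminus K$; the control on $h_\mu$ combines the classical F.\ and M.\ Riesz theorem with the fact that $\J$ contains $\omega$ times a sufficiently rich family of outer functions. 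Once this decomposition of $\J^\perp$ is available, $g|_K = 0$ annihilates $\mu_s$ while the factorization $g = \omega h$ paired with the $H^1$-condition on $h_\mu$ annihilates the absolutely continuous part, forcing $\int g\, d\mu = 0$, a contradiction.

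The main obstacle is precisely the structural description of $\J^\perp$: once it is in hand, the rest reduces to bookkeeping. This is the heart of the original arguments of Carleson and Rudin and of Hoffman's exposition, and it is where the interplay between the measure-theoretic input (F.\ and M.\ Riesz) and the function-theoretic inputs (inner-outer factorization and Rudin-Carleson) becomes essential. I would expect that in the multiplier setting treated in the rest of the paper, the analogous structural step --- controlling annihilating functionals by a combination of zero-set data and an inner-type factor --- is similarly the principal difficulty, now complicated by the lack of comparability between the multiplier and supremum norms.
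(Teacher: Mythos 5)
The paper does not prove Theorem~\ref{T:carlesonrudin}; it is stated as classical background, with the original proofs attributed to Carleson and Rudin and Hoffman's book cited for an exposition. There is therefore no internal proof to compare against, so I will assess your sketch directly.

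Your outline is in the right spirit, but let me flag three things. First, the theorem requires $K \supseteq X_\omega$, and this should be verified: a function $f\in A(\bD)$ must vanish at every boundary point in $X_{\omega_f}$ (the closure of the Blaschke zeros together with the support of the singular measure of $\omega_f$), so $X_{\omega_f}\subseteq Z(f)\cap\bS_1$, and divisibility $\omega\,|\,\omega_f$ gives $X_\omega\subseteq X_{\omega_f}$; intersecting over $f\in\J$ yields $X_\omega\subseteq K$. Second, the concentration of $\mu_s$ on $K$ follows from F.\ and M.\ Riesz alone, without Rudin--Carleson: for $\phi\in\J$ and $\mu\perp\J$, the measure $\phi\mu$ annihilates $A(\bD)$ and hence is absolutely continuous; its singular part $\phi\mu_s$ therefore vanishes, so $\mu_s$ is concentrated on $Z(\phi)\cap\bS_1$ for every $\phi\in\J$. (This is exactly the $d=1$ shadow of the argument in Theorem~\ref{T:TS annihilators}.) Third, and most importantly, the step you label as an ``$H^1$-type condition'' on $h_\mu$ is the crux, and as written it is too vague to constitute a proof --- this is where the genuine gap lies. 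The cleanest way to make it precise is to note that $\mu_a = h_\mu\,d\sigma$ is a weak-$*$ continuous functional on $H^\infty(\bD)$ which annihilates $\J$, hence annihilates the weak-$*$ closure $\widetilde\J$; the weak-$*$ Beurling theorem gives $\widetilde\J = \omega' H^\infty(\bD)$ for some inner $\omega'$, and one checks $\omega'=\omega$ by comparing with your GCD inner divisor. This yields $\J = \I(K)\cap\widetilde\J$, which is Hedenmalm's Theorem~\ref{T:hedenmalm} specialized to $d=1$, and substituting $\widetilde\J = \omega H^\infty$ gives the stated form. Once that step is filled in, your argument is the standard one, and it is precisely the blueprint the paper itself follows in proving Theorem~\ref{T: ideals of A_d} via Theorems~\ref{T:dual} and~\ref{T:dualJ}.
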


The general case of the ball algebra was handled much later by Hedenmalm \cite{Hed89}. In this higher dimensional setting, the smaller supply of inner functions available makes the description less explicit. Given a set $\S\subset A(\bB_d)$, we let $Z(\S)\subset \ol{\bB_d}$ be the common zero set of the functions in $\S$; while given a set $K\subset \ol{\bB_d}$, we let $\I(K)\subset A(\bB_d)$ be the ideal of functions vanishing on $K$, as above. The description of ideals of $A(\bB_d)$ reads as follows.

\begin{theorem}[Hedenmalm]\label{T:hedenmalm}
Let $\J\subset A(\bB_d)$ be a closed ideal, and let $K = Z(\J)\cap \bS_d$. Then
\[
 \J = \widetilde{\J} \cap \I(K),
\]
where $\widetilde{\J}$ denotes the weak-$*$ closure of $\J$ in $H^\infty(\bB_d)$.
\end{theorem}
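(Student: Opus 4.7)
The inclusion $\J \subseteq \widetilde{\J} \cap \I(K)$ is immediate: $\J$ sits inside its own weak-$*$ closure, and every $\phi \in \J$ vanishes on $K = Z(\J) \cap \bS_d$ by definition. For the reverse inclusion, I would argue by duality. Suppose $f \in A(\bB_d)$ lies in $\widetilde{\J} \cap \I(K)$ but $f \notin \J$. Using the identification $A(\bB_d)^* \cong M(\bS_d)/A(\bB_d)^\perp$ and the Hahn-Banach theorem, there is a measure $\mu \in M(\bS_d)$ with $\mu \perp \J$ and $\int f\, d\mu \neq 0$; the plan is to derive a contradiction by showing $\int f\, d\mu = 0$.

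The main tool is the Glicksberg-K\"onig-Seever band decomposition adapted to the ball algebra: every $\mu \in M(\bS_d)$ splits uniquely as $\mu = \mu_H + \mu_s$, where $\mu_H$ is a Henkin measure (the functional it defines on $A(\bB_d)$ extends weak-$*$ continuously to $H^\infty(\bB_d)$) and $\mu_s$ is $A(\bB_d)$-singular (mutually singular with every Henkin measure). Both bands are stable under multiplication by elements of $A(\bB_d)$, and the annihilator $A(\bB_d)^\perp$ is stable under the band decomposition itself. For an ideal $\J$ this lets one separate the two pieces: given $\phi \in \J$, the ideal property places $\phi \mu$ in $A(\bB_d)^\perp$, which then splits as $\phi \mu_H + \phi \mu_s$ with each piece still in $A(\bB_d)^\perp$, so evaluating against the constant $1$ yields $\int \phi\, d\mu_H = \int \phi\, d\mu_s = 0$. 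Hence $\mu_H \perp \J$ and $\mu_s \perp \J$ individually. The Henkin part is then handled directly: the attached functional is weak-$*$ continuous on $H^\infty(\bB_d)$, so its annihilation of $\J$ propagates to $\widetilde{\J}$, giving $\int f\, d\mu_H = 0$.

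The main obstacle is to show that $|\mu_s|$ is concentrated on $K$; granting this, $f|_K = 0$ gives $\int f\, d\mu_s = 0$, which combined with the previous paragraph yields the desired contradiction $\int f\, d\mu = 0$. This concentration claim is the function-theoretic heart of the argument, and is where the F.\,\&\,M. Riesz theorem would close the case $d = 1$ by forcing $\mu_s = 0$ outright; for $d \geq 2$ one must instead invoke peak-interpolation / totally null set theory for the ball algebra. The rough idea is that an $A(\bB_d)$-singular measure in $A(\bB_d)^\perp$ is carried by $A(\bB_d)$-totally null sets, and since for each $z \in \bS_d \setminus K$ some $\phi \in \J$ has $\phi(z) \neq 0$, a countable selection $\{\phi_n\} \subset \J$ together with the peak-interpolation structure of $\supp \mu_s$ should force $|\mu_s|(\bS_d \setminus K) = 0$. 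This is the delicate step, relying on the Henkin-Val$'$ski\u{\i} structural theory for singular annihilators of $A(\bB_d)$, and is where the bulk of the technical work lies.
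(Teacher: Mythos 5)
The paper does not reprove Hedenmalm's theorem; it cites \cite{Hed89} and instead proves the $\A_d$-analogue (Theorem~\ref{T: ideals of A_d}) via the duality machinery of Section~\ref{S:dualityideals}, whose central step (Theorem~\ref{T:TS annihilators}) the authors explicitly describe as ``inspired by a clever argument from \cite{Hed89}.'' Your outline is the same approach at heart: separate a putative $f\in\widetilde\J\cap\I(K)\setminus\J$ by a measure $\mu\perp\J$, decompose $\mu=\mu_H+\mu_s$ into its Henkin and $A(\bB_d)$-singular parts, use that the ideal property and the Cole--Range/Glicksberg--K\"onig--Seever theory force each part to annihilate $\J$ separately, dispatch $\mu_H$ by weak-$*$ continuity against $\widetilde\J$, and reduce to showing $\mu_s$ is concentrated on $K$. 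Steps (1)--(5) of your plan are correct and complete.

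The gap is in step (6), and it is a gap of execution rather than of strategy: you correctly flag the concentration of $\mu_s$ on $K$ as the heart of the matter, but then reach for machinery (countable selections from $\J$, totally null set structure, peak-interpolation) that is both vaguer and heavier than what is needed. You already set up the key fact in step (4): for $\phi\in\J$, the ideal property gives $\phi\mu_s\in A(\bB_d)^\perp$. Now observe two things. First, $\phi\mu_s\ll\mu_s$, and the singular band is stable under absolute continuity, so $\phi\mu_s$ is again $A(\bB_d)$-singular. Second --- and this is the clever point --- \emph{any} measure in $A(\bB_d)^\perp$ induces the zero functional on $A(\bB_d)$, which trivially extends weak-$*$ continuously to $H^\infty(\bB_d)$, so $\phi\mu_s$ is also a Henkin measure. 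A measure that is both Henkin and $A(\bB_d)$-singular is singular with respect to itself, hence zero. Therefore $\phi\mu_s=0$, which means $|\mu_s|$ vanishes on the open set $\{|\phi|>0\}$. Since this holds for every $\phi\in\J$ and $\mu_s$ is regular, $\supp\mu_s\subset\bigcap_{\phi\in\J}Z(\phi)\cap\bS_d=K$, and you are done: $\int f\,d\mu_s=0$ because $f|_K=0$, yielding the contradiction $\int f\,d\mu=0$. This is precisely the mechanism behind the paper's Theorem~\ref{T:TS annihilators}, transplanted back to $\AB$. Your proposal would be correct once this step is filled in as above; no countable exhaustion or peak-interpolation is required.
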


The proof of this result uses duality arguments.  It depends on the finer structure of  the dual space of $\AB$ which in turn hinges on results of Henkin \cite{Henkin}, Valskii \cite{Valskii} and Cole and Range \cite{ColeRange} (for a full treatment, see \cite[Chapter 9]{Rudin}). Note that the Carleson-Rudin theorem contains the information that the zero set of a proper ideal intersects $\bS_1$ in a closed set of Lebesgue measure zero, while  Hedenmalm's theorem contains almost no information about  zero sets.

In the present paper, we study the ideals and zero sets for the norm closed algebra $\A_d$ generated by the polynomial multipliers on the Drury-Arveson space.
Analogues of the aforementioned results on the dual space of $\AB$ were established by the authors in \cite{CD}, and they play an equally important role in our study of $\A_d$. 


The plan of the paper is as follows. In Section \ref{S:prelim} we introduce the necessary background and preliminaries on the Drury-Arveson space and the algebra $\A_d$. 

In Section \ref{S:dualityideals}, we establish several technical duality results that we require throughout the paper. We exhibit a intimate relation between the property of the boundary portion $K$ of a zero set being small with respect to the algebra $\A_d$, and the weak-$*$ closure in the dual space $\A_d^*$ of a certain space of measures on $K$. More precisely, we prove the following (see Corollary \ref{C:TNw*} and Theorem \ref{T:TSw*closed}). 

\begin{theorem}\label{T:intro1}
Let $\J\subset \A_d$ be a closed ideal, and let $K=Z(\J)\cap \bS_d$. Then, the space of $\A_d$--totally singular measures concentrated on $K$ is  closed in the weak-$*$ topology of $\A_d^*$ if and only if $K$ is $\A_d$-totally null.
\end{theorem}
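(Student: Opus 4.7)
The approach is duality, built on the Lebesgue-type decomposition of $\A_d^*$ into absolutely continuous and totally singular parts established in \cite{CD}. View $\TS_K$ as a subspace of $\A_d^*$ via the pairing with $\A_d$. The bipolar theorem gives
\[
\wsclos{\TS_K} \;=\; \bigl({}^\perp \TS_K\bigr)^\perp,
\]
so weak-$*$ closedness of $\TS_K$ reduces to identifying $\TS_K$ with the annihilator, in $\A_d^*$, of the pre-annihilator ${}^\perp \TS_K \subseteq \A_d$. The plan is to connect both sides of this identity with the $\A_d$-TN property of $K$, by matching ${}^\perp \TS_K$ with $\I(K) \cap \A_d$, the space of multipliers vanishing on $K$ in the appropriate boundary sense.

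For the direction ($\Leftarrow$), if $K$ is $\A_d$-totally null, then by definition every $\A_d$-TS measure assigns zero variation to $K$. Combined with the requirement that elements of $\TS_K$ are also concentrated on $K$, this forces $\TS_K = \{0\}$, which is trivially weak-$*$ closed. Thus this direction is essentially immediate from the definitions, though one still needs to check compatibility with the bipolar identification.

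For the direction ($\Rightarrow$), I argue contrapositively. Suppose $K$ is not $\A_d$-TN. Then there is an $\A_d$-TS measure $\mu$ with $|\mu|(K) > 0$; since the TS measures form a band in $M(\bS_d)$ (closed under Borel restriction), $\mu|_K$ is a nonzero element of $\TS_K$, so $\TS_K$ is nontrivial. To witness non-closedness, I plan to invoke the F.\ and M.\ Riesz / Valskii--Cole--Range-style decomposition from \cite{CD}: the failure of TN allows the construction of a functional on $\A_d$ annihilating $\I(K) \cap \A_d$ but whose decomposition in $\A_d^*$ has a nonzero absolutely continuous part. Such a functional lies in $(\I(K) \cap \A_d)^\perp = \wsclos{\TS_K}$ but cannot be represented by any TS measure, so it lies outside $\TS_K$, contradicting weak-$*$ closedness.

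The principal obstacle is the bipolar identification ${}^\perp\TS_K = \I(K) \cap \A_d$, and the subsequent production of a non-TS functional in $(\I(K) \cap \A_d)^\perp$ when $K$ fails to be TN. Both steps hinge on the fine structural results of \cite{CD}, which play here the role that Henkin, Valskii, and Cole--Range play in Hedenmalm's proof of Theorem~\ref{T:hedenmalm}. An additional subtlety is that $\A_d$ is not a uniform function algebra on $\bS_d$: the notions ``$f$ vanishes on $K$'' and ``$\mu$ is concentrated on $K$'' must be read through the boundary-theoretic framework of \cite{CD} rather than pointwise on $\bS_d$.
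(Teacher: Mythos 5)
Your proposal has a fundamental error in the direction ($\Leftarrow$), which stems from a confusion between the definitions of ``$\A_d$-totally null'' and ``$\A_d$-totally singular.'' You write that ``if $K$ is $\A_d$-totally null, then by definition every $\A_d$-TS measure assigns zero variation to $K$,'' and conclude $\TS_{\A_d}(K) = \{0\}$. This is false. The definition of $\A_d$-totally null is that every $\A_d$-\emph{Henkin} measure assigns zero variation to $K$; equivalently, \emph{every} measure concentrated on $K$ is $\A_d$-totally singular. So when $K$ is totally null, $\TS_{\A_d}(K) = M(K)$, which is huge (it contains, for instance, every point mass $\delta_\zeta$ with $\zeta\in K$ — these are always $\A_d$-totally singular by \cite[Proposition 6.1]{CD}). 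The direction you dismiss as ``essentially immediate from the definitions'' is actually the substantial half of the theorem: the paper proves it (Theorem~\ref{T:TSw*closed}) by applying the Krein--Smulyan criterion to a bounded net, decomposing each functional via the F.\&M.~Riesz theorem (Theorem~\ref{T:dualJ}) into a predual part and a totally singular measure, and passing to a weak-$*$ convergent subnet of the measure parts. None of this is present in your proposal.

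The direction ($\Rightarrow$), by contrast, is roughly on track despite being phrased through the same definitional confusion. If $K$ is not $\A_d$-totally null, there is an $\A_d$-\emph{Henkin} measure (not a TS measure, as you write) $\mu$ with $|\mu|(K)>0$; its restriction $\mu|_K$ is a nonzero Henkin measure concentrated on $K$ (by the band property, Theorem~\ref{T:band}). This measure annihilates $\I(K)$, hence lies in $\I(K)^\perp = (\TS_{\A_d}(K)_\perp)^\perp$, but it is not totally singular, so it lies outside $\TS_{\A_d}(K)$. By the bipolar theorem $\TS_{\A_d}(K)$ then fails to be weak-$*$ closed. Your identification ${}^\perp \TS_{\A_d}(K) = \I(K)$ (Proposition~\ref{P:TSpreann} in the paper) and the bipolar framing are correct and match the paper's Corollary~\ref{C:TNw*}; but you should not need to ``construct'' an annihilating functional with a nonzero absolutely continuous part — the restricted Henkin measure already is one, and the step is elementary rather than requiring the fine structure theory you allude to.

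In short: fix the definitions. Totally null is a statement about Henkin measures vanishing on $K$, which makes all measures on $K$ totally singular. The ($\Leftarrow$) direction is not trivial and needs a genuine argument (Krein--Smulyan plus the F.\&M.~Riesz decomposition); your current version of it proves nothing.
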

 
Shifting our focus towards functions vanishing on a set rather than measures concentrated on it, we give the following characterization of a subset of the sphere being small with respect to $\A_d$ (see Corollaries \ref{C:I(K) dense} and \ref{C:I(K) dense ball}).

\begin{theorem}\label{T:intro2}
Let $K\subset \bS_d$ be a closed set. 
Then, $K$ is $\A_d$-totally null if and only if the ideal $\I(K)$ of functions in $\A_d$ vanishing on $K$ is weak-$*$ dense in the full multiplier algebra $\M_d$. When this happens, the unit ball of $\I(K)$ is weak-$*$ dense in the unit ball of $\M_d$.
\end{theorem}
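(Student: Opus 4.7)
\medskip

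\noindent\textbf{Proof proposal.} The plan is to combine Theorem~\ref{T:intro1} with the Lebesgue-type decomposition of functionals on $\A_d$ developed in \cite{CD} and a Hahn--Banach argument in the predual of $\M_d$.

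For the easy implication ($\I(K)$ weak-$*$ dense $\Rightarrow$ $K$ totally null), let $\mu$ be any $\A_d$-totally singular measure concentrated on $K$. By the duality results of \cite{CD}, integration against $\mu$ defines a weak-$*$ continuous functional $\phi_\mu$ on $\M_d$. Since every element of $\I(K)$ vanishes on $K = \supp \mu$, this functional annihilates $\I(K)$, and the weak-$*$ density assumption forces $\phi_\mu = 0$ on all of $\M_d$. Testing against polynomials recovers every moment of $\mu$, so $\mu = 0$. Hence $K$ is $\A_d$-totally null.

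For the converse, suppose $K$ is $\A_d$-totally null. By Hahn--Banach, weak-$*$ density of $\I(K)$ in $\M_d$ reduces to showing that any element $\phi$ of the predual of $\M_d$ annihilating $\I(K)$ must vanish. Restrict $\phi$ to $\A_d$ and apply the decomposition from \cite{CD} into an absolutely continuous (Henkin) piece and an $\A_d$-totally singular piece. Theorem~\ref{T:intro1} applied to $\J = \I(K)$ controls the totally singular piece: it is forced to lie in a space of measures concentrated on $K$, hence vanishes by $\A_d$-total nullity. The Henkin piece is then a Henkin measure annihilating $\I(K)$; since Henkin measures give mass zero to totally null sets, combining this with the ideal-theoretic description of the zero set forces this piece to vanish as well. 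Then $\phi|_{\A_d} = 0$, and the weak-$*$ density of polynomials (hence of $\A_d$) in $\M_d$ yields $\phi = 0$.

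The main obstacle is the unit-ball assertion. My approach would be to construct a net $(h_\alpha)$ in the unit ball of $\I(K)$ with $h_\alpha \to 1$ in the weak-$*$ topology of $\M_d$; for any $f \in \M_d$ with $\|f\| \le 1$, the products $h_\alpha f$ then lie in $\I(K)$, have multiplier norm at most $1$, and converge weak-$*$ to $f$. The existence of such contractive ``peak-like'' elements is expected precisely under $\A_d$-total nullity of $K$, and would be established using the fine function-theoretic information extracted from the duality framework of \cite{CD}; verifying this construction carefully is the most delicate point of the argument.
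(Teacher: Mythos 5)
Your proposal has several genuine gaps, and the first one is a substantive error rather than a mere slip.

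\textbf{The forward direction.} You begin with ``let $\mu$ be any $\A_d$-totally singular measure concentrated on $K$'' and assert that integration against $\mu$ defines a weak-$*$ continuous functional on $\M_d$. This is backwards: the weak-$*$ continuous extendability property is precisely what \emph{defines} $\A_d$-Henkin measures, and totally singular measures are the measures singular to all of those. Moreover, ``$K$ is $\A_d$-totally null'' does not mean that totally singular measures on $K$ vanish --- point masses $\delta_\zeta$ for $\zeta \in K$ are always totally singular and nonzero --- it means every $\A_d$-Henkin measure gives $K$ zero mass, or equivalently that \emph{every} measure on $K$ is totally singular. The correct argument is the mirror image of what you wrote: take an $\A_d$-Henkin measure $\mu$, restrict it to $K$ (the restriction is still Henkin by the band property), observe that the resulting functional is weak-$*$ continuous on $\M_d$ and annihilates $\I(K)$, hence annihilates its weak-$*$ closure $\M_d$, so it is zero.

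\textbf{The converse direction.} The reduction via Hahn--Banach is right, but the execution is muddled. You restrict a predual element $\Psi \in \M_{d*}$ that annihilates $\I(K)$ to $\A_d$ and decompose it; but since $\Psi$ already lies in $\M_{d*}$, the decomposition is trivial (the totally singular part is zero), so Theorem~\ref{T:intro1} provides no control here. Worse, the phrase ``hence vanishes by $\A_d$-total nullity'' has things inverted again, and the treatment of the ``Henkin piece'' implicitly invokes knowledge of $\widetilde{\I(K)}$, which is what you are trying to determine --- this is circular. The actual engine of the paper's proof is the implication you never use: that $\A_d$-total nullity of $K$ forces $\TS_{\A_d}(K)$ to be weak-$*$ closed in $\A_d^*$ (Theorem~\ref{T:TSw*closed}). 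Combined with $\TS_{\A_d}(K)_\perp = \I(K)$ (Proposition~\ref{P:TSpreann}), one gets $\I(K)^\perp = \TS_{\A_d}(K)$, which meets $\M_{d*}$ only in $\{0\}$ by the $\ell^1$-direct sum decomposition of $\A_d^*$; hence $\widetilde{\I(K)}_\perp = \M_{d*}\cap\I(K)^\perp = \{0\}$.

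\textbf{The unit ball claim.} Your proposed construction of a contractive net $h_\alpha \to 1$ in $\I(K)$ is a plausible alternative route, but as you note you cannot carry it out, and the existence of such ``contractive peak-like'' elements on a totally null set is nontrivial (it is related to, but not a formal consequence of, the peak interpolation Theorem~\ref{T:peakinterpolation}). The paper sidesteps this entirely: from $\I(K)^\perp = \TS_{\A_d}(K)$ and the dual decomposition one computes $\I(K)^* = \M_{d*} \oplus_1 (\TS_{\A_d}(\bS_d)/\TS_{\A_d}(K))$, hence $\I(K)^{**} = \M_d \oplus \TS_{\A_d}(K)^\perp$, and Goldstine's theorem immediately yields density of the unit ball. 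This bidual argument is both shorter and avoids any hard function theory.
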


Interpolation results (Corollary \ref{C:interpolation}) are obtained as additional byproducts of our duality tools.

In Section \ref{S:ideals}, we establish our structure theorem (Theorem \ref{T: ideals of A_d}) for ideals of $\A_d$, which has a striking resemblance with the classical Theorems \ref{T:carlesonrudin} and \ref{T:hedenmalm}. In particular, the weak-$*$ closure of an ideal plays a role. We also show in Theorem \ref{T:XminusY} that the zero set of an ideal and that of its weak-$*$ closure are the same up to a small set. 

\begin{theorem}\label{T:intro3}
Let $\J\subset \A_d$ be a closed ideal and let $\widetilde{\J}$ be its weak-$*$ closure in the full multiplier algebra $\M_d$. Let $K = Z(\J)\cap \bS_d$. Then, $\J = \I(K) \cap \widetilde\J$. Moreover, $Z(\J)\setminus Z(\widetilde{\J}\cap \A_d)$ is an $\A_d$-totally null subset of $\bS_d$.
\end{theorem}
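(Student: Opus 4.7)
I would handle the two assertions separately; the first does the heavy lifting, and the second follows from it by applying the first back to a slightly enlarged ideal.

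\emph{First assertion.} The inclusion $\J \subset \I(K) \cap \widetilde\J$ is immediate from the definitions. For the reverse, I would argue by duality. Assume for contradiction that some $f \in \I(K) \cap \widetilde\J$ is not in $\J$; by Hahn--Banach there is $\phi \in \A_d^*$ with $\phi|_{\J} = 0$ and $\phi(f) \neq 0$. I then apply the $\A_d^*$-decomposition developed in \cite{CD} and reviewed in Section~\ref{S:dualityideals}, writing $\phi = \phi_a + \phi_s$ where $\phi_a$ is induced by an element of the predual $\M_{d,*}$ and $\phi_s$ is an $\A_d$-totally singular measure. Weak-$*$ continuity of $\phi_a$ together with the fact that $\phi_a$ annihilates $\J$ forces $\phi_a$ to annihilate the weak-$*$ closure $\widetilde\J$, so $\phi_a(f) = 0$. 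For $\phi_s(f) = 0$ I invoke the support statement of Corollary~\ref{C:TNw*} and Theorem~\ref{T:TSw*closed}, namely that the totally singular component of any annihilator of $\J$ is concentrated on $K$. Since $f \in \I(K)$, this yields $\phi_s(f) = 0$ and a contradiction.

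\emph{Second assertion.} Set $\J' := \widetilde\J \cap \A_d$ and $K' := Z(\J') \cap \bS_d$. Weak-$*$ continuity of the point evaluations at interior points of $\bB_d$ shows $Z(\J) \cap \bB_d \subset Z(\J')$, so $Z(\J) \setminus Z(\J')$ lies entirely on $\bS_d$ and equals $K \setminus K'$. From $\J \subset \J' \subset \widetilde\J$ we get $\widetilde{\J'} = \widetilde\J$, and the first assertion applied to $\J'$ then gives $\J' = \I(K') \cap \widetilde\J$. To show that $K \setminus K'$ is $\A_d$-totally null, I would invoke Theorem~\ref{T:intro2} and reduce to proving that $\I(K \setminus K')$ is weak-$*$ dense in $\M_d$. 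For this, take $\nu \in \M_{d,*}$ annihilating $\I(K \setminus K')$. Since $\J \subset \I(K \setminus K')$ and $\nu$ is weak-$*$ continuous, $\nu$ annihilates $\widetilde\J$, so $\nu|_{\A_d} \in (\J')^\perp$. Combining this with the description $\J' = \I(K') \cap \widetilde\J$, the disjointness $(K \setminus K') \cap K' = \varnothing$, and a second pass through the $\A_d^*$-decomposition from Section~\ref{S:dualityideals}, I expect to force $\nu = 0$.

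\emph{Main obstacle.} The technical crux of both parts is the same: the concentration statement that the totally singular component of an annihilator of a closed ideal is supported on the sphere zero set $K$. This is the heart of Section~\ref{S:dualityideals}, and it in turn rests on the fine structure of $\A_d^*$ established in \cite{CD}. The second assertion is further delicate because one must transport the disjointness $(K \setminus K') \cap K' = \varnothing$ through the duality in a way that genuinely uses the separation of the two pieces of the sphere; otherwise the argument would collapse to the tautology $\widetilde{\J'} = \widetilde\J$ without producing any information about $K \setminus K'$.
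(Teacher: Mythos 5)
Your argument for the first assertion, $\J = \I(K) \cap \widetilde\J$, is essentially the paper's argument: both rest on the F.\&M.\ Riesz-type decomposition $\J^\perp = \widetilde\J_\perp \oplus_1 \TS_{\A_d}(K)$. You phrase it as a Hahn--Banach separation while the paper writes it directly via preannihilators ($\J = (\J^\perp)_\perp = (\widetilde\J_\perp)_\perp \cap \TS_{\A_d}(K)_\perp$), but the content is identical. One correction: the ``support statement'' you need --- that the totally singular part of a functional annihilating $\J$ is concentrated on $K$ --- is Theorem~\ref{T:dualJ} (equivalently Theorem~\ref{T:TS annihilators}), not Corollary~\ref{C:TNw*} or Theorem~\ref{T:TSw*closed}; the latter two are downstream consequences and assert something different (weak-$*$ closedness of $\TS_{\A_d}(E)$).

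Your argument for the second assertion has a genuine gap. You correctly reduce to showing $\I(K\setminus K')^\perp \cap \M_{d*} = \{0\}$ (and your reduction is salvageable despite $K\setminus K'$ not being closed, since the implication ``weak-$*$ dense $\Rightarrow$ totally null'' in Corollary~\ref{C:I(K) dense} does not actually use closedness). But the only information you extract from $\nu \perp \I(K\setminus K')$ is the much weaker $\nu \perp \J$ (since $\J \subset \I(K\setminus K')$), which gives $\nu \in \widetilde\J_\perp$ --- and nothing more. The set $\widetilde\J_\perp \cap \M_{d*}$ is not $\{0\}$ in general, and the subsequent appeal to ``$\J' = \I(K')\cap\widetilde\J$, disjointness, and a second pass through the decomposition'' does not produce an actual step: plugging $\nu\in\widetilde\J_\perp$ into Theorem~\ref{T:dualJ} applied to $\J'$ returns $\nu\in\widetilde{\J'}_\perp = \widetilde\J_\perp$, a tautology. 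You have discarded the real constraint (annihilating the large ideal $\I(K\setminus K')$, not just $\J$), and I do not see how to recover it by pure duality.

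The paper's proof of the second assertion (Theorem~\ref{T:XminusY}) is more concrete and is not a straightforward duality argument. Given an $\A_d$-Henkin measure $\eta$, one restricts $|\eta|$ to $K\setminus K'$ to get a Henkin measure $\mu$, and then for each $\zeta_0 \in K\setminus K'$ chooses $\phi \in \widetilde\J\cap\A_d$ with $\phi(\zeta_0)\neq 0$. Approximating $\phi$ weak-$*$ by $\psi_n\in\J$ and using that $\bar\phi\mu$ is Henkin, one computes $\int |\phi|^2\,d\mu = \lim_n \int \psi_n\bar\phi\,d\mu = 0$, so $\zeta_0\notin\supp\mu$. The decisive move is integrating against $\bar\phi$, which exits $\A_d$; this is precisely the kind of information a formal pass through $\A_d^*$-duality cannot see, and it is why the abstract approach in your second paragraph stalls.
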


Finally, in Section \ref{S:zerosets} we undertake a more exhaustive study of zero sets for $\A_d$. We show that closed $\A_d$--totally null subsets of the sphere are necessarily zero sets (Proposition \ref{P:Z(f)}), based on a sophisticated interpolation result from \cite{CD}. In addition, we provide supporting evidence that the converse may also be valid (Theorem \ref{T:zero + TN}), thus connecting our work with an old unresolved question of Rudin which asks whether the zero set of an ideal of $\AB$ is the zero set of a single function. Interestingly, our Theorems \ref{T:intro1}, \ref{T:intro2} and  \ref{T:intro3} provide a novel approach for a possible future resolution of this question. We also consider the problem of describing the smallest zero set for $\A_d$ containing a given subset $X$, and obtain rather definitive results in the case where $X$ is the intersection of a zero set with the sphere. Indeed, we show in Theorem \ref{T:smallestzeroset} that the following holds.

\begin{thm}\label{T:intro4}
Let $X$ be a zero set for $\A_d$ and let $X_0=X\cap\bS_d$. 
Define $\widehat{X_0}$ to be the set consisting of $X_0$ together with all points in $\bB_d$ 
which have a representing measure supported on $X_0$. 
Then, $\widehat{X_0}$ is the smallest zero set for $\A_d$ containing $X_0$, 
and  $X\setminus \widehat{X_0}$ is a countable discrete set.
\end{thm}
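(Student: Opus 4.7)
The plan is to establish the theorem in three steps: (i) minimality --- every zero set for $\A_d$ containing $X_0$ contains $\widehat{X_0}$; (ii) $\widehat{X_0}$ is itself a zero set, which together with (i) proves the first assertion; (iii) $X \setminus \widehat{X_0}$ is countable and discrete. Step (i) is immediate from the defining property of a representing measure: if $Y = Z(\S) \supseteq X_0$ is a zero set and $\lambda \in \widehat{X_0} \setminus X_0 \subseteq \bB_d$ carries a representing measure $\mu$ supported on $X_0$, then each $f \in \S$ satisfies $f|_{X_0} = 0$ and so $f(\lambda) = \int f \, d\mu = 0$, placing $\lambda$ in $Y$.

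For step (ii), I propose to show $\widehat{X_0} = Z(\I(X_0))$, where $\I(X_0) = \{f \in \A_d : f|_{X_0} = 0\}$. The inclusion $\widehat{X_0} \subseteq Z(\I(X_0))$ is the same integration identity. For the reverse inclusion, a point $\lambda \in \bS_d \cap Z(\I(X_0))$ must belong to $X_0$: writing $X = Z(\J)$ for some ideal $\J \subseteq \I(X_0)$, any $\lambda \in \bS_d \setminus X_0 = \bS_d \setminus X$ is separated from $X$ by some $f \in \J$ with $f(\lambda) \neq 0$, contradicting $\lambda \in Z(\I(X_0))$. For the interior case $\lambda \in \bB_d \cap Z(\I(X_0))$, the character $f \mapsto f(\lambda)$ vanishes on $\I(X_0)$ and descends to a multiplicative linear functional on $\A_d/\I(X_0)$, hence on the restriction subalgebra $\A_d|_{X_0} \subseteq C(X_0)$. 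A standard function-algebra representing-measure argument, combined with multiplicativity and the normalization $1 \mapsto 1$, then produces a probability measure on $X_0$ that represents $\lambda$, so $\lambda \in \widehat{X_0}$.

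For step (iii), $X \setminus \widehat{X_0}$ lies entirely inside $\bB_d$ since $X_0 \subseteq \widehat{X_0}$. Fix any nonzero $f \in \A_d$ with $X \subseteq Z(f)$; then $X \cap \bB_d$ is contained in the analytic variety $V := Z(f) \cap \bB_d$. Any positive-dimensional irreducible component of $V$ lying inside $X$ must accumulate at $\bS_d$ within $X_0$, and through each of its points one constructs an analytic disc in the component whose boundary values lie in $X_0$; the pushforward of normalized Lebesgue measure under such a disc is a representing measure on $X_0$, placing the entire component in $\widehat{X_0}$. The points of $X \setminus \widehat{X_0}$ must therefore be isolated zeros of $f$ in $\bB_d$, which, being discrete in the $\sigma$-compact space $\bB_d$, form a countable set. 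The principal obstacle I anticipate lies in step (iii) --- specifically, the construction of analytic discs with boundary in $X_0$ through every interior point of a positive-dimensional component of $V$. This will likely require the Henkin--Valskii style machinery from \cite{CD} adapted to the multiplier algebra $\A_d$, together with the structural information from Theorem \ref{T:intro3} relating $\J$ to its weak-$*$ closure in $\M_d$, to ensure that no positive-dimensional piece of $X \cap \bB_d$ can escape $\widehat{X_0}$.
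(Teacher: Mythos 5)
Your Step (i) is correct and matches the paper. The trouble begins in Step (ii), at the passage to a representing measure for an interior point $\lambda \in \bB_d \cap Z(\I(X_0))$. You assert that the evaluation character at $\lambda$ ``descends to'' $\A_d|_{X_0}\subseteq \rC(X_0)$ and that a standard argument then yields a probability measure on $X_0$. But this conflates two different norms: the character is automatically contractive for the quotient norm on $\A_d/\I(X_0)$, whereas to produce a measure on $X_0$ you need contractivity for the sup norm over $X_0$, i.e.\ the inequality $|\phi(\lambda)|\le\sup_{X_0}|\phi|$ for all $\phi\in\A_d$. That inequality is exactly the assertion that $X_0$ is a boundary for the restricted algebra, and it is the nontrivial content of the theorem, not a standard fact. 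In general the Shilov boundary of $\A_d/\I(X_0)$ also contains any isolated points of $Z(\I(X_0))\cap\bB_d$ (Lemma \ref{L:shilov}), and for such a point no representing measure on $X_0$ exists. The paper closes this gap by showing there \emph{are} no isolated interior points of $Z(\I(X_0))$: if $z$ were one, the Shilov idempotent theorem produces $\phi\in\A_d$ with $\phi(z)=1$ and $\phi=0$ on $Z(\I(X_0))\setminus\{z\}$; then $Z(\I(X_0))\cap Z(\phi)$ is a strictly smaller zero set containing $X_0$, contradicting minimality. Only after this does the Shilov boundary collapse to $X_0$, and the representing measure exists.

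Step (iii) as written does not go through. You pass to $V=Z(f)\cap\bB_d$ for a single $f$ vanishing on $X$, but $V$ can be much larger than $X\cap\bB_d$; a positive-dimensional component of $V$ need not lie inside $X$, and conversely a positive-dimensional piece of $X\cap\bB_d$ need not be a component of $V$, so the deduction ``points of $X\setminus\widehat{X_0}$ must be isolated zeros of $f$'' fails. You should instead work directly with the variety $X\cap\bB_d$. More fundamentally, the plan to push forward Lebesgue measure along an analytic disc whose boundary lies in $X_0$ asks for far more than is available: through a generic interior point of a variety there is no analytic disc with boundary in a prescribed compact subset of $\bS_d$, and the Henkin--Valskii machinery in \cite{CD} does not produce one. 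The paper avoids all of this. Lemma \ref{L:repmeasure} --- proved with the same Shilov-boundary and idempotent-theorem tools used in Step (ii) --- shows that every non-isolated point of $X\cap\bB_d$ already has a positive representing measure supported on $X_0$, hence lies in $\widehat{X_0}$. Thus $X\setminus\widehat{X_0}$ consists of isolated points of the analytic variety $X\cap\bB_d$, which form a discrete subset of $\bB_d$ and are therefore countable. In short, the soft function-algebra argument you invoked informally in Step (ii), once made precise, also dispatches Step (iii); the analytic-disc detour is both unnecessary and unavailable.
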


In fact, our proof applies to the uniform algebra setting as well. This result has a nice interpretation in complex geometry: up to a countable discrete set, an analytic variety in the ball is completely determined by its intersection with the boundary. 
This says in particular that if $V$ is a connected analytic variety which intersects the open ball,
then $V \cap \bB_d$ is determined by $V \cap \bS_d$. We have not been able to find a reference containing this result, even though it seems to answer a natural and classical question.

Lastly, we show that under some mild natural conditions, an interpolating sequence can be adjoined to an $\A_d$--totally null subset to obtain another zero set (Corollary \ref{C:zero interp}). This is meaningful from the operator theoretic point of view, as shown in \cite{abscont}.

\textbf{Acknowledgements.} The second author would like to thank Alex Izzo for several stimulating
conversations related to this paper.

\section{Background and preliminaries}\label{S:prelim}

Let $d\geq 1$ be an integer. The \emph{Drury-Arveson space}, denoted by $H_d^2$, is the reproducing kernel Hilbert space on the open unit ball $\bB_d\subset \bC^d$ with kernel given by
\[ k(z,w) = \frac1{1-\ip{z,w}} \qfor z,w \in \bB_d .\]
This space has the complete Nevanlinna-Pick property (\cite{quiggin93}), and furthermore it plays a central role in modern multivariate operator theory (\cite{Drury78,MV93,Arv98}). It is only a supporting character in our study however, as we are mostly interested in its  \emph{multiplier algebra}, which we denote by $\M_d$. Recall that a function $\phi:\bB_d\to \bC$ is a \emph{multiplier} if $\phi H^2_d\subset H^2_d$. The associated multiplication operator $M_\phi$ on $H^2_d$ is bounded, and we define the multiplier norm as follows
\[
\|\phi\|_{\M_d}=\|M_{\phi}\|.
\]
It is basic fact that
\[
\|\phi\|_{\infty}\leq \|\phi\|_{\M_d}
\]
for every $\phi\in \M_d$. While the polynomials are contained in $\M_d$, the inclusion 
\[
\M_d\subset H^\infty(\bB_d)
\]
is strict, and in fact the multiplier norm and supremum norm are not comparable. Throughout the paper we will focus on the norm closed subalgebra $\A_d$ of $\M_d$ generated by the polynomial multipliers. In particular, we have
\[
\A_d\subset \AB.
\]

By identifying a multiplier $\phi\in \M_d$ with the multiplication operator $M_\phi\in B(H^2_d)$, we may view $\M_d$ as an operator algebra on $H^2_d$, and this algebra is closed in the weak-$*$ topology of $B(H^2_d)$. 
In particular, $\M_d$ has a weak-$*$ topology. 
For sequences, this topology can easily be understood as follows:  $\{\phi_n\}_n\subset \M_d$ converges to $\psi\in \M_d$ in the weak-$*$ topology if and only if $\{\phi_n\}_n$ is bounded in the multiplier norm and converges pointwise to $\psi$ on $\bB_d$.

As mentioned in the introduction, several of our results have proofs that are based on duality arguments in $\A_d$. 
Because of the inequality
\[
\|\phi\|_{\infty}\leq \|\phi\|_{\M_d}, \quad \phi\in \A_d,
\]
every regular Borel measure on the unit sphere $\bS_d$ gives rise to a continuous linear functional on $\A_d$ via integration. 
Denote by $M(\bS_d)$ the space of such measures. 
Let $\mu\in M(\bS_d)$ and let $\Phi_\mu\in \A_d^*$ be the induced integration functional on $\A_d$.
We say that $\mu$ is \emph{$\A_d$-Henkin} if $\Phi_\mu$ extends to a weak-$*$ continuous linear functional on $\M_d$. 
At the other extreme, we say that $\mu$ is \emph{$\A_d$-totally singular} if it is singular with respect to every $\A_d$-Henkin measure. 
These definitions are direct analogues of the the classical ones for the ball algebra \cite[Chapter 9]{Rudin}. The point mass $\delta_\zeta$ is $\A_d$-totally singular for every $\zeta\in \bS_d$ by \cite[Proposition 6.1]{CD}. Given a Borel subset $E\subset \bS_d$, we let $\TS_{\A_d}(E)$ denote the space of all $\A_d$-totally singular measures $\mu$ which are concentrated on $E$, in the sense that $\mu(A\cap E)=\mu(A)$ for every Borel set $A$.

A Borel subset $K\subset \bS_d$ is called \emph{$\A_d$-totally null} if $|\mu|(K)=0$ for every $\A_d$-Henkin measure $\mu$. Equivalently, $K$ is $\A_d$-totally null if every measure supported on it is $\A_d$-totally singular (\cite[Corollary 5.6]{CD}). On such sets, we can achieve ``peak interpolation" using functions in $\A_d$ with
additional control on the multiplier norm of the interpolating function. The following deep result (\cite[Theorem 9.5]{CD}) is analogous to a classical theorem of Bishop  for uniform algebras \cite{Bishop}.

\begin{thm}\label{T:peakinterpolation}
Let $K\subset \bS_d$ be a closed $\A_d$-totally null subset and let $\ep>0$.
Then for every $f\in \rC(K)$, there exists $\phi\in \A_d$ such that 
\begin{enumerate}[label=\normalfont{(\roman*)}]
\item  $\phi|_K=f$,
\item  $|\phi(\zeta)|<\|f\|_K$ for every $\zeta\in \ol{\bB_d} \setminus K$, and
\item  $\|\phi\|_{\M_d}\leq (1+\ep)\|f\|_K $.
\end{enumerate}
\end{thm}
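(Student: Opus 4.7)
The plan is to follow the Bishop peak interpolation template, with every norm estimate carried out in the multiplier norm $\|\cdot\|_{\M_d}$ rather than only the sup norm $\|\cdot\|_\infty$. The argument splits naturally into a duality step that produces approximate extensions with multiplier norm control, and a classical telescoping step that upgrades an approximate extension to a genuine peak extension.

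The first and deepest step is a quotient norm calculation: for any $f \in \rC(K)$ and $\eta > 0$, the goal is to produce $\phi \in \A_d$ with $\phi|_K = f$ and $\|\phi\|_{\M_d} \leq (1+\eta)\|f\|_K$. Via Hahn--Banach, this amounts to showing that the quotient norm on $\A_d / \I(K)$ essentially coincides with the sup norm $\|\cdot\|_K$. Any functional $\Phi \in \A_d^*$ of norm $\leq 1$ that vanishes on $\I(K)$ should, by the duality theory developed in \cite{CD}, be represented (or at least weak-$*$ approximated) by a measure concentrated on $K$. Since $K$ is $\A_d$-totally null, every such measure is automatically $\A_d$-totally singular, and the companion results of \cite{CD} on totally singular measures should then force the functional norm of $\Phi$ to coincide with its total variation. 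The latter is at most $\|f\|_K$ when pairing against elements of $\rC(K)$, yielding the desired quotient norm bound.

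The second step is Bishop's inductive peak construction, now executed in the multiplier norm. Starting from an extension $\phi_0$ of $f$ with $\|\phi_0\|_{\M_d}$ close to $\|f\|_K$, one inductively builds corrections $\psi_n \in \A_d$ that vanish on $K$, have small $\M_d$-norm, and modify $\phi_n \coloneqq \phi_{n-1} + \psi_{n-1}$ so as to drive its modulus strictly below $\|f\|_K$ on an exhausting sequence of compact sets in $\ol{\bB_d} \setminus K$. At stage $n$, one applies the first step to the totally null set $K \cup F_n$, where $F_n$ is a finite collection of trouble points from the previous stage. The cardinality of $F_n$ may grow, but a geometric decay of $\|\psi_n\|_{\M_d}$ guarantees that $\phi = \phi_0 + \sum_{n \geq 0} \psi_n$ converges in $\M_d$ to a function satisfying (i)--(iii).

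The main obstacle lies in the first step. In the classical Bishop setting, the extension with sup-norm control follows from a one-step Hahn--Banach argument whose direct analogue here fails because the multiplier norm strictly dominates the sup norm with no reverse bound. The totally null hypothesis is what renders the dual measures ``invisible'' to the multiplier norm beyond their sup-norm action on $K$; this is a deep structural feature of $\A_d^*$ specific to the Drury--Arveson setting and requires the full weight of the Henkin/Cole--Range-type machinery established in \cite{CD}. Once the quotient norm identity is in hand, the Bishop iteration is essentially formal, and the constant $(1+\ep)$ in (iii) absorbs all accumulated slackness from the successive approximate extensions.
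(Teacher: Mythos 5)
The paper does not actually prove this theorem: it is imported verbatim as \cite[Theorem 9.5]{CD} and used as a black box. So there is no in-paper proof to compare against, and your attempt must be judged on its own merits against what is known about the argument in \cite{CD}.

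Your Step 1 is sound in spirit and in fact tracks the duality machinery that the present paper \emph{does} develop in Section \ref{S:dualityideals}: when $K$ is $\A_d$-totally null one gets $\I(K)^\perp = \TS_{\A_d}(K) = M(K)$ (Proposition~\ref{P:TSpreann} plus Theorem~\ref{T:TSw*closed}), and consequently $\A_d/\I(K) \cong \rC(K)$ isometrically; this is exactly the content of (the $K$-only version of) Theorem~\ref{T:isometricquotient}. The key non-formal ingredient that you correctly flag, namely that $\TS_{\A_d}(K)$ is weak-$*$ closed and accounts for all of $\I(K)^\perp$, is indeed where the Henkin/Cole--Range-type duality from \cite{CD} enters.

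Step 2 has real gaps. First, the claim that one can apply Step~1 to ``the totally null set $K\cup F_n$'' is only meaningful if $F_n\subset\bS_d$: total nullity is a notion for subsets of the sphere, and point masses at interior points $\lambda\in\bB_d$ correspond to weak-$*$ continuous (Henkin) functionals, not totally singular ones. The paper even records the obstruction explicitly: the restriction of $\A_d$ (or $\AB$) to two interior points cannot be isometric onto $\rC(F)$ because of the Schwarz lemma, so the quotient-norm identity of Step~1 fails the moment $F_n$ picks up two or more points of $\bB_d$. Second, even for $F_n\subset\bS_d$, a finite set of points cannot control the modulus of $\phi_n$ on compact subsets of $\ol{\bB_d}\setminus K$, so the mechanism by which your geometric series of corrections is supposed to enforce the strict pointwise inequality (ii) is unclear; adding terms of small $\M_d$-norm does not by itself push $|\phi_n|$ below $\|f\|_K$ at the places where it is already nearly $\|f\|_K$. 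Condition (ii) is precisely the ``peak'' content of Bishop's theorem, and in the classical case it is obtained by a one-shot convexity/extreme-point or measure-theoretic argument rather than by a Cauchy-sequence correction scheme; in the $\M_d$ setting the argument of \cite{CD} is more delicate still, because the multiplier norm has to be tracked through an estimate that is not governed by the sup norm. So the closing claim that ``the Bishop iteration is essentially formal'' is not accurate; that is where the depth lies.
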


It is not known whether $\eps$ can be chosen to be zero in the statement above.

Note that since the multiplier norm and supremum norm are not comparable, not all continuous linear functionals on $\A_d$ are of the form $\Phi_{\mu}$ for some $\mu\in M(\bS_d)$.
The following result is a summary of \cite[Corollary 4.3, Theorem 4.4]{CD} and it paints a more complete picture.

\begin{thm}\label{T:dual}
The dual space $\A_d^*$ can be completely isometrically identified with $\M_{d*}\oplus_1 \TS_{\A_d}(\bS_d)$. In particular, any functional $\Psi\in \A_d^*$ can be decomposed as $\Psi=\Psi_a+\Phi_{\mu_s}$ where $\Psi_a\in \M_{d*}$,\ $\mu_s\in \TS_{\A_d}(\bS_d)$ and 
\[
\|\Psi\|_{\A_d^*}=\|\Psi_a\|_{\M_{d*}}+\|\mu_s\|_{M(\bS_d)}.
\]
\end{thm}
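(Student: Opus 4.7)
The plan is to realize $\A_d^*$ as a quotient of a well-understood larger dual space via an isometric embedding, and then extract the two claimed summands from parallel Lebesgue-type decompositions. The key observation is that, because of the estimate $\|\phi\|_\infty \leq \|\phi\|_{\M_d}$ valid on $\A_d$, the diagonal map
\[
j \colon \A_d \longrightarrow \M_d \oplus_\infty \rC(\bS_d), \qquad j(\phi) = (M_\phi, \phi|_{\bS_d}),
\]
is an \emph{isometric} embedding (the $\ell^\infty$ norm on the right is already saturated by the first coordinate). Its codomain has the nicely understood dual $\M_d^* \oplus_1 M(\bS_d)$, so by Hahn--Banach every $\Psi \in \A_d^*$ admits a norm-preserving extension to a pair $(L,\nu)$ with $\|L\|_{\M_d^*} + \|\nu\|_{M(\bS_d)} = \|\Psi\|_{\A_d^*}$ and $\Psi(\phi) = L(M_\phi) + \int_{\bS_d} \phi\, d\nu$ for all $\phi \in \A_d$.

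Next I would run two Lebesgue-type decompositions in parallel. On the $\M_d^*$ side, split $L = L_a + L_s$ into its weak-$*$ continuous part $L_a \in \M_{d*}$ and its weak-$*$ singular part $L_s$, with the norms adding (this is standard for the dual of a dual Banach space). On the $M(\bS_d)$ side, decompose $\nu = \nu_H + \nu_s$ into its $\A_d$-Henkin and $\A_d$-totally singular parts, which likewise is an $\ell^1$-decomposition by the very definition of totally singular. Since $\nu_H$ is Henkin, the functional $\Phi_{\nu_H}$ extends weak-$*$ continuously to $\M_d$, hence equals $\omega_H|_{\A_d}$ for a unique $\omega_H \in \M_{d*}$ with $\|\omega_H\|_{\M_{d*}} \leq \|\nu_H\|_{M(\bS_d)}$.

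The heart of the argument, and the step I expect to be hardest, is to show that the weak-$*$ singular part $L_s$, when restricted to $\A_d$, is \emph{realized by a totally singular measure}, i.e. $L_s|_{\A_d} = \Phi_{\mu_s}$ for some $\mu_s \in \TS_{\A_d}(\bS_d)$. This is a non-commutative F.~and~M.~Riesz / Glicksberg / Cole--Range type statement: the weak-$*$-singular functionals on $\M_d$ that happen to restrict continuously to $\A_d$ must arise from measures concentrated on the boundary and orthogonal to every Henkin measure. The plan would be to dilate $L_s$ (via Arveson's extension theorem) to a completely bounded map into a commutative envelope, identify its boundary representation on $\rC(\bS_d)$, and check that the resulting representing measure is annihilated by every $\A_d$-Henkin measure; the peak-interpolation/Bishop machinery already available for $\A_d$ supplies the separation needed to carry this through.

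Granted this, one sets $\Psi_a := (L_a + \omega_H)|_{\A_d} \in \M_{d*}|_{\A_d}$ and $\mu := \nu_s + \mu_s \in \TS_{\A_d}(\bS_d)$, and verifies $\Psi = \Psi_a + \Phi_\mu$. The summation map $\M_{d*} \oplus_1 \TS_{\A_d}(\bS_d) \to \A_d^*$ is trivially contractive, so $\|\Psi\|_{\A_d^*} \leq \|\Psi_a\|_{\M_{d*}} + \|\mu\|_{M(\bS_d)}$; the reverse inequality is forced by the norm preservation in the Hahn--Banach extension together with the $\ell^1$-additivity of both decompositions. For uniqueness it suffices to show that if $\omega|_{\A_d} + \Phi_{\mu_s} = 0$ with $\omega \in \M_{d*}$ and $\mu_s \in \TS_{\A_d}(\bS_d)$, then both are zero: indeed the equation forces $\Phi_{\mu_s}$ to extend weak-$*$ continuously to $\M_d$, making $\mu_s$ simultaneously Henkin and totally singular, hence zero, and then $\omega|_{\A_d} = 0$ with $\omega \in \M_{d*}$ forces $\omega = 0$ by weak-$*$ density of $\A_d$ in $\M_d$.
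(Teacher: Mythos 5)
Note first that the paper does not actually prove Theorem \ref{T:dual}: it is quoted directly from [CD, Corollary 4.3 and Theorem 4.4], so there is no in-text argument to compare against. Assessing your sketch on its own terms: the isometric embedding $j\colon \A_d \to \M_d \oplus_\infty \rC(\bS_d)$ and the Hahn--Banach extraction of a pair $(L,\nu)$ with $\|L\| + \|\nu\| = \|\Psi\|_{\A_d^*}$ are fine as far as they go, and your uniqueness argument at the end is correct. The difficulties come in between.

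The step you call the ``heart of the argument''---that the weak-$*$ singular part $L_s$ of $L$, once restricted to $\A_d$, is given by integration against an $\A_d$-totally singular measure---is a genuine gap, not merely a hard lemma that Arveson's extension theorem will deliver. The functional $L_s|_{\A_d}$ is only multiplier-norm bounded; nothing in your setup makes it sup-norm bounded, so there is no a priori reason for it to be represented by \emph{any} measure on $\bS_d$, let alone a totally singular one. Moreover the Hahn--Banach extension $(L,\nu)$ is highly non-unique (one may always take $\nu=0$ and $L$ a norm-preserving extension to $\M_d$), so your plan effectively tries to prove the entire theorem from a single extension into $\M_d^*$; that is precisely the obstacle the Henkin/totally-singular machinery is designed to overcome, and it requires a Glicksberg--K\"onig--Seever/Cole--Range-style iteration exploiting the band structure of Theorem \ref{T:band}, not a completely positive dilation of $L_s$. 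Two further issues: the splitting $L = L_a + L_s$ with $\|L\| = \|L_a\| + \|L_s\|$ is Takesaki's theorem for von Neumann algebras and is not automatic for the dual operator algebra $\M_d$, so it needs a reference or proof; and the closing norm computation tacitly uses $\|\Phi_{\mu_s}\|_{\A_d^*} = \|\mu_s\|_{M(\bS_d)}$ for $\mu_s \in \TS_{\A_d}(\bS_d)$, which is itself one of the deep ingredients (it needs the peak-interpolation technology behind Theorem \ref{T:peakinterpolation}) and cannot be folded into ``the $\ell^1$-additivity of both decompositions.'' Finally, the statement asserts a \emph{complete} isometry, which your sketch does not address at all.
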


Here, $\M_{d*}$ denotes the standard predual of $\M_d$, namely 
\[
\M_{d*}=B(H^2_d)_*/\M_{d\perp}.
\]

Because of Theorem \ref{T:dual}, to understand $\A_d^*$ it is sufficient to understand both pieces $\M_{d*}$ and $\TS_{\A_d}(\bS_d)$ separately. One property of these spaces which we will require is that they are closed under absolute continuity: they form complementary \emph{bands}.

\begin{theorem}\label{T:band}
Let $\mu,\nu\in M(\bS_d)$ be such that $\mu$ is absolutely continuous with respect to $\nu$. If $\nu$ is $\A_d$-totally singular then so is $\mu$. If $\nu$ is $\A_d$-Henkin then so is $\mu$. Moreover every measure on $\bS_d$ decomposes uniquely as the sum of an $\A_d$-Henkin measure and an $\A_d$-totally singular measure.
\end{theorem}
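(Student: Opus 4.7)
The plan is to obtain Theorem \ref{T:band} from Theorem \ref{T:dual} together with elementary measure theory. I would address the three assertions in the order (1), existence of the decomposition, and finally (2), since the argument for (2) relies on the decomposition.

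For assertion (1), suppose $\mu \ll \nu$ with $\nu$ being $\A_d$-totally singular, and let $\lambda$ be an arbitrary $\A_d$-Henkin measure. The hypothesis $\nu \perp \lambda$ supplies a Borel set $E$ with $|\nu|(E^c)=0$ and $|\lambda|(E)=0$. Absolute continuity forces $|\mu|(E^c)=0$, so $\mu \perp \lambda$. As $\lambda$ was arbitrary, $\mu$ is totally singular. This part is pure measure theory.

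For the existence of the decomposition, I would apply Theorem \ref{T:dual} to the integration functional $\Phi_\mu \in \A_d^*$ associated with a given $\mu \in M(\bS_d)$. This yields $\Phi_\mu = \Psi + \Phi_{\mu_s}$ with $\Psi \in \M_{d*}$ and $\mu_s \in \TS_{\A_d}(\bS_d)$. Setting $\mu_a := \mu - \mu_s \in M(\bS_d)$ gives $\Phi_{\mu_a} = \Psi$ as functionals on $\A_d$, and since $\Psi$ extends weak-$*$ continuously to all of $\M_d$, the measure $\mu_a$ is $\A_d$-Henkin by definition. For uniqueness of this decomposition (which goes slightly beyond what the $\oplus_1$ in Theorem \ref{T:dual} records, since the latter is at the functional level), I would observe that any measure which is simultaneously Henkin and totally singular is singular with respect to itself and hence zero; together with the obvious fact that each of the two classes is a linear subspace of $M(\bS_d)$, this forces $\mu_a$ and $\mu_s$ to be unique.

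Assertion (2) is the main obstacle, and is where the decomposition is actually needed. Given $\mu \ll \nu$ with $\nu$ Henkin, I would write $\mu = \mu_a + \mu_s$ using the previous paragraph and argue that $\mu_s = 0$. The key observation is that $\mu_a$ is itself a particular Henkin measure, so the total singularity of $\mu_s$ forces $\mu_s \perp \mu_a$. Consequently $\mu_a$ and $\mu_s$ are concentrated on disjoint Borel sets, which gives the exact equality $|\mu| = |\mu_a| + |\mu_s|$. In particular $|\mu_s| \leq |\mu|$, so absolute continuity propagates to give $\mu_s \ll \nu$. Combined with $\mu_s \perp \nu$ (which holds because $\nu$ is Henkin and $\mu_s$ is totally singular), this forces $\mu_s = 0$, and hence $\mu = \mu_a$ is Henkin. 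The only subtle point in this argument is the passage from $\mu_s \perp \mu_a$ to $|\mu| = |\mu_a| + |\mu_s|$, which is a standard consequence of Jordan decomposition on disjoint carriers and requires no duality input.
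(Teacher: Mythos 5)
The paper does not supply a proof of Theorem \ref{T:band}; it is recalled as a background fact from \cite{CD} immediately after Theorem \ref{T:dual}, so there is no in-text argument to compare against. That said, your proposal is correct and self-contained given Theorem \ref{T:dual}. Assertion (1) is indeed pure measure theory (a subset of a null set is null). The existence of the decomposition $\mu=\mu_a+\mu_s$ follows exactly as you say from $\Phi_\mu=\Psi+\Phi_{\mu_s}$, since $\mu_a:=\mu-\mu_s$ is a genuine element of $M(\bS_d)$ whose integration functional on $\A_d$ agrees with $\Psi|_{\A_d}$, and $\Psi$ is the required weak-$*$ continuous extension. Your uniqueness argument (each class is a linear subspace, and a measure both $\A_d$-Henkin and $\A_d$-totally singular is singular with respect to itself and hence zero) is valid; one could also note that the isometric identification in Theorem \ref{T:dual} already forces uniqueness of $\mu_s$, since a totally singular measure with $\Phi_{\mu_s}=0$ must have $\|\mu_s\|=\|\Phi_{\mu_s}\|_{\A_d^*}=0$, but your direct route is cleaner. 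For assertion (2), the chain $\mu_s\perp\mu_a \Rightarrow |\mu|=|\mu_a|+|\mu_s| \Rightarrow |\mu_s|\le|\mu| \Rightarrow \mu_s\ll\nu$, combined with $\mu_s\perp\nu$, is exactly right; the only fact used is that mutually singular complex measures have additive total variations, and that $\mu\ll\nu$ is equivalent to $|\mu|\ll|\nu|$. No gaps.
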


Let us now briefly discuss the space $\M_{d*}$, which is more difficult to grasp than $\TS_{\A_d}(\bS_d)$. By definition, it contains $\Phi_{\mu}$ whenever $\mu$ is an $\A_d$-Henkin measure. However, since the norm on $\A_d$ is not comparable to the supremum norm over the ball, $\A_d^*$ must contain functionals which are not given as integration against some measure on the sphere. For every element $\Psi\in \M_{d*}$ and every $\eps>0$, there are functions $f,g\in H^2_d$ such that
\[ \Psi(\phi) = \langle M_\phi f,g \rangle_{H^2_d}, \quad \phi\in \A_d \]
and 
\[
\|f\|_{H^2_d} \, \|g\|_{H^2_d}<\|\Psi\|_{\A_d^*}+\eps.
\]
These facts follow from \cite[Theorem 2.10]{DP99}, but we will not require them explicitly.

Throughout the paper, if $X$ is a Banach space and $E\subset X$, we put
\[
E^\perp=\{\Lambda\in X^*:\Lambda(x)=0 \qforal x\in E\}
\]
while if $F\subset X^*$ we put
\[
F_\perp=\{x\in X: \Lambda(x)=0 \qforal \Lambda\in F\}.
\]
Standard separation arguments using the Hahn-Banach theorem show that if $E\subset X$ is a norm closed subspace and $F\subset X^*$ is weak-$*$ closed subspace, then
\[
(E^\perp)_\perp=E \qand (F_\perp)^\perp=F.
\]
For our purposes, we will need the following F.\&M.\ Riesz type result, which is \cite[Theorem 4.7]{CD}.
If $E\subset \A_d$, we let $\widetilde{E}\subset \M_d$ denote the closure of $E$ in the weak-$*$ topology of $\M_d$.
\begin{theorem}\label{T:dualJ}
Let $\J\subset \A_d$ be a closed ideal and let $\Psi\in \J^\perp$. Suppose that $\Psi=\Psi_a+\Phi_{\mu_s}$ where $\Psi_a\in \M_{d*}$ and $\mu_s\in \TS_{\A_d}(\bS_d)$. Then, $\Psi_a\in \widetilde{\J}_\perp$ and $\mu_s\in \TS_{\A_d}(Z(\J)\cap \bS_d)$, where $Z(\J)$ is the common zero set of the functions in $\J$.
\end{theorem}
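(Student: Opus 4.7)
The plan is to exploit the ideal structure of $\J$ to turn the single vanishing identity $\Psi(\phi\psi)=0$ into two separate identities, one living in $\M_{d*}$ and one in the totally singular band, and then let uniqueness in Theorem~\ref{T:dual} force each piece to vanish on its own.

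Fix $\psi \in \J$. For any $\phi \in \A_d$ we have $\phi\psi \in \J$, so
\[
\Psi_a(\phi\psi) + \int_{\bS_d} \phi\psi\,d\mu_s = 0.
\]
Set $\Psi_{a,\psi}(\phi) := \Psi_a(\phi\psi)$; this still lies in $\M_{d*}$ because right multiplication by $M_\psi$ on $B(H_d^2)$ is weak-$*$ continuous, its predual being the bounded map $\tau \mapsto M_\psi\tau$ on the trace class. Define a complex measure $\psi\mu_s$ on $\bS_d$ by $d(\psi\mu_s) = \psi|_{\bS_d}\,d\mu_s$; since $\psi|_{\bS_d}$ is continuous, $\psi\mu_s$ is absolutely continuous with respect to $\mu_s$, hence $\A_d$-totally singular by the band property of Theorem~\ref{T:band}. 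The identity above therefore reads $\Psi_{a,\psi} + \Phi_{\psi\mu_s} = 0$ in $\A_d^*$, with its two summands in the complementary factors of Theorem~\ref{T:dual}. Uniqueness of that decomposition forces $\Psi_{a,\psi} = 0$ and $\psi\mu_s = 0$.

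From $\Psi_{a,\psi}=0$ specialized at $\phi\equiv 1$ I get $\Psi_a(\psi) = 0$ for every $\psi \in \J$. Since $\Psi_a$ extends to a weak-$*$ continuous functional on $\M_d$, it must vanish on the weak-$*$ closure $\widetilde{\J}$, so $\Psi_a \in \widetilde{\J}_\perp$. For the second conclusion, $\psi\mu_s = 0$ means $\psi = 0$ $|\mu_s|$-almost everywhere. Each set $\{\zeta \in \bS_d : \psi(\zeta) \neq 0\}$ is open in $\bS_d$, and by second countability the open cover $\bS_d \setminus Z(\J) = \bigcup_{\psi \in \J}\{\psi \neq 0\}$ admits a countable subcover, whence $|\mu_s|(\bS_d \setminus Z(\J)) = 0$ and $\mu_s$ is concentrated on $Z(\J)\cap\bS_d$, giving $\mu_s \in \TS_{\A_d}(Z(\J)\cap\bS_d)$.

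The main obstacle is conceptual rather than computational: one must recognize that multiplying by an ideal element is precisely the operation that respects the band decomposition of $\A_d^*$, simultaneously preserving $\M_{d*}$ (via weak-$*$ continuity of right multiplication) and $\TS_{\A_d}(\bS_d)$ (via the band property). Once this observation is in place, uniqueness of the decomposition in Theorem~\ref{T:dual} does the rest of the work.
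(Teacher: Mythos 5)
Your proof is correct, and the central idea---multiplying the annihilating functional by an element $\psi$ of the ideal so that the resulting functional splits into an $\M_{d*}$-piece and a totally singular piece that must then \emph{each} vanish by uniqueness of the band decomposition in Theorem~\ref{T:dual}---is exactly the kind of F.\ \& M.\ Riesz argument that underlies this circle of ideas. Note that the paper does not actually prove Theorem~\ref{T:dualJ}; it quotes it from \cite[Theorem~4.7]{CD}. But the technique you use is visibly the same one the authors deploy in the proof of Theorem~\ref{T:TS annihilators} (which they credit to Hedenmalm): take an annihilator, multiply by an ideal element, observe the product lies in both complementary bands, and conclude it is zero. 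Your argument is a correct and clean packaging of this, and the two supporting observations---that right multiplication by $M_\psi$ is weak-$*$ continuous on $B(H_d^2)$ so $\Psi_{a,\psi}\in\M_{d*}$, and that $\psi\mu_s\ll\mu_s$ so $\psi\mu_s\in\TS_{\A_d}(\bS_d)$ by Theorem~\ref{T:band}---are both right. The Lindel\"of argument at the end (covering $\bS_d\setminus Z(\J)$ by countably many of the open sets $\{\psi\neq 0\}$, each $|\mu_s|$-null) gives concentration of $\mu_s$ on $Z(\J)\cap\bS_d$ exactly as required. The one place to be slightly more careful is the deduction ``$\Phi_{\psi\mu_s}=0\Rightarrow\psi\mu_s=0$'': this uses that $\nu\mapsto\Phi_\nu$ is injective on $\TS_{\A_d}(\bS_d)$, which follows either from the isometry in Theorem~\ref{T:dual} or from the observation that a measure which is simultaneously $\A_d$-Henkin (because it annihilates $\A_d$) and $\A_d$-totally singular must vanish. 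You invoke this correctly, but it deserves a sentence rather than being folded silently into ``uniqueness.''
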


\section{Duality and ideals in $\A_d$}\label{S:dualityideals}

In this section we prove duality results relating ideals and $\A_d$-totally singular measures that will be used throughout the paper. First, let us establish some terminology and notation.

If $\S\subset \A_d$ is a set, we let $Z(\S)\subset \ol{\bB_d}$ denote the common zero set of the functions in $\S$. Observe that if $\langle \S\rangle \subset \A_d$ denotes the closed ideal generated by $S$, then $Z(\S)=Z(\langle \S\rangle)$. Furthermore, if $X\subset\ol{\bB_d}$ is a set, we let $\I(X)\subset \A_d$ denote the closed ideal of functions vanishing on $X$.

It is easily verified that for any sets $\S\subset \A_d$ and $X\subset \ol{\bB_d}$, we have
\[
X\subset Z(\I(X)) \qand \langle \S \rangle\subset \I(Z(\S)).
\]
In particular, we have that
\begin{equation}\label{E:ZI}
Z(\I(Z(\S)))=Z(\S) \qand \I(Z(\I(X)))=\I(X).
\end{equation}

We start with a basic observation.

\begin{proposition}\label{P:TSpreann}
Let $E\subset \bS_d$ be a Borel set. Then
\[
\TS_{\A_d}(E)_\perp=\I(E).
\]
\end{proposition}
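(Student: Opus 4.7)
The plan is to establish the two inclusions $\I(E) \subset \TS_{\A_d}(E)_\perp$ and $\TS_{\A_d}(E)_\perp \subset \I(E)$, both of which turn out to be quick given the tools already recorded.

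For the first inclusion, I would simply unwind the definitions. If $\phi \in \I(E)$, then $\phi$ vanishes on $E$; since any $\mu \in \TS_{\A_d}(E)$ is concentrated on $E$ (meaning $\mu(A) = \mu(A \cap E)$ for every Borel $A \subset \bS_d$), a standard approximation by simple functions shows that $\int \phi\,d\mu = \int_E \phi\,d\mu = 0$. Hence $\Phi_\mu(\phi) = 0$, and $\phi \in \TS_{\A_d}(E)_\perp$.

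For the reverse inclusion, I would use the fact, recalled in Section~\ref{S:prelim} via \cite[Proposition 6.1]{CD}, that the point mass $\delta_\zeta$ is $\A_d$-totally singular for every $\zeta \in \bS_d$. If $\zeta \in E$, then $\delta_\zeta$ is also trivially concentrated on $E$, so $\delta_\zeta \in \TS_{\A_d}(E)$. Given $\phi \in \TS_{\A_d}(E)_\perp$, we obtain
\[
\phi(\zeta) = \int \phi \, d\delta_\zeta = \Phi_{\delta_\zeta}(\phi) = 0 \qforal \zeta \in E,
\]
which says precisely that $\phi \in \I(E)$.

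There is no real obstacle here; the content of the statement is entirely packaged into the prior results, particularly the total singularity of point masses. The only point worth watching is that $E$ is an arbitrary Borel set rather than closed, but this plays no role in the argument above since the point evaluations used lie in $E$ itself and the definition of ``concentrated on $E$'' via $|\mu|(\bS_d \setminus E) = 0$ is adapted to Borel sets.
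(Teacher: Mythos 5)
Your argument is correct and follows exactly the same route as the paper: the forward inclusion is immediate from the definition of ``concentrated on $E$,'' and the reverse inclusion uses the fact that $\delta_\zeta \in \TS_{\A_d}(E)$ for each $\zeta \in E$ (via \cite[Proposition 6.1]{CD}). You have merely spelled out the ``obvious'' direction in slightly more detail; the substance is identical.
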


\begin{proof}
Obviously we have $\I(E)\subset \TS_{\A_d}(E)_\perp$. Conversely, let $\zeta\in E$. Then $\delta_{\zeta}\in \TS_{\A_d}(E)$ so that $\phi\in \TS_{\A_d}(E)_\perp$ implies $\phi(\zeta)=0$. Hence 
\[
\TS_{\A_d}(E)_\perp\subset \I(E) .\qedhere
\]
\end{proof}

The following result complements the previous proposition. The proof is inspired by a clever argument from \cite{Hed89}.

\begin{theorem}\label{T:TS annihilators}
Let $\J\subset \A_d$ be a closed ideal. Then 
\[ \J^\perp \cap  \TS_{\A_d}(\bS_d) \ =\  \I(Z(\J))^\perp \cap \TS_{\A_d}(\bS_d) \ =\  \TS_{\A_d}(Z(\J) \cap \bS_d) .\]
\end{theorem}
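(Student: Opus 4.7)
The strategy is to establish a circular chain of inclusions
\[
\I(Z(\J))^\perp \cap \TS_{\A_d}(\bS_d) \subset \J^\perp \cap \TS_{\A_d}(\bS_d) \subset \TS_{\A_d}(Z(\J)\cap \bS_d) \subset \I(Z(\J))^\perp \cap \TS_{\A_d}(\bS_d),
\]
where $\TS_{\A_d}(\bS_d)$ is viewed as a subspace of $\A_d^*$ via $\mu\mapsto \Phi_\mu$, an identification justified by Theorem \ref{T:dual}.

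For the first inclusion, I would simply observe that every function in $\J$ vanishes on its zero set, so $\J \subset \I(Z(\J))$, from which $\I(Z(\J))^\perp \subset \J^\perp$ is automatic. For the third inclusion, if $\mu \in \TS_{\A_d}(Z(\J)\cap \bS_d)$ and $\phi \in \I(Z(\J))$, then $\phi$ vanishes on $Z(\J)\cap \bS_d$, on which $\mu$ is concentrated, so $\int \phi \, d\mu = 0$; this is a one-line measure-theoretic check.

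The heart of the matter is the middle inclusion, and this is exactly where the earlier F.\&M.~Riesz type result (Theorem \ref{T:dualJ}) does the work. Given $\mu \in \J^\perp \cap \TS_{\A_d}(\bS_d)$, I would apply that theorem to the functional $\Psi = \Phi_\mu \in \J^\perp$, noting that its canonical decomposition from Theorem \ref{T:dual} is $\Psi = \Psi_a + \Phi_{\mu_s}$ with $\Psi_a = 0 \in \M_{d*}$ and $\mu_s = \mu \in \TS_{\A_d}(\bS_d)$ (uniqueness of the decomposition guarantees no ambiguity). The conclusion of Theorem \ref{T:dualJ} immediately gives $\mu \in \TS_{\A_d}(Z(\J)\cap \bS_d)$.

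The main obstacle in this argument is really already hidden in Theorem \ref{T:dualJ}, which encapsulates the F.\&M.~Riesz phenomenon in this nonclassical setting. Once that technology is in hand the present statement reduces to a routine chase of inclusions, with the only subtle point being to remember that $\J^\perp \cap \TS_{\A_d}(\bS_d)$ is to be interpreted through the embedding furnished by Theorem \ref{T:dual}, so that the decomposition invoked in the application of Theorem \ref{T:dualJ} is unambiguous.
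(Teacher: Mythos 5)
Your argument is correct, and the structure of the inclusion chain mirrors the paper's; the difference is in the proof of the one nontrivial inclusion, $\J^\perp \cap \TS_{\A_d}(\bS_d) \subset \TS_{\A_d}(Z(\J)\cap \bS_d)$. You obtain it by invoking Theorem \ref{T:dualJ} directly: take $\Psi = \Phi_\mu$, observe that the Theorem \ref{T:dual} decomposition is $\Psi_a = 0$, $\mu_s = \mu$, and read off the conclusion. The paper instead gives a self-contained measure-theoretic argument that does not pass through Theorem \ref{T:dualJ}: for $\mu \in \J^\perp \cap \TS_{\A_d}(\bS_d)$ and $\phi \in \J$, the measure $\phi\mu$ annihilates all of $\A_d$ (since $\J$ is an ideal), hence is trivially $\A_d$-Henkin; but $\phi\mu \ll \mu$ is also $\A_d$-totally singular by the band property (Theorem \ref{T:band}), so $\phi\mu = 0$, i.e.\ $\mu$ is carried by $Z(\phi)\cap\bS_d$; intersecting over $\phi\in\J$ gives $\supp\mu \subset Z(\J)\cap\bS_d$. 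This is the Hedenmalm-style argument the paper explicitly flags. Your route is shorter because Theorem \ref{T:dualJ} already packages the F.\&M.~Riesz content, but it leans on a heavier imported result; the paper's route is more elementary, using only the band decomposition of $M(\bS_d)$, and exhibits the mechanism (ideal multiplication forcing Henkin/totally-singular collision) that is also at play inside the proof of Theorem \ref{T:dualJ}. Since Theorem \ref{T:dualJ} is cited from \cite{CD} and proved independently there, your proof is not circular and is a legitimate alternative.
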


\begin{proof}
It is clear that
\[ \TS_{\A_d}(Z(\J) \cap \bS_d) \ \subset\  \I(Z(\J))^\perp \cap \TS_{\A_d}(\bS_d)  \ \subset\   \J^\perp \cap \TS_{\A_d}(\bS_d).\]
Take $\mu\in \J^\perp \cap \TS_{\A_d}(\bS_d)$.
Note then that $\phi\mu$ annihilates $\A_d$ for all $\phi\in \J$. Hence, $\phi\mu$ is trivially an $\A_d$-Henkin measure. On the other hand, $\phi\mu$ belongs to $\TS_{\A_d}(\bS_d)$ by Theorem \ref{T:band}; so that $\phi\mu=0$.
This shows that the support of $\mu$ is contained in the zero set of $\phi$.
Repeating this argument for every $\phi\in\J$ shows that $\mu$ is supported on $Z(\J)\cap \bS_d$, whence $\mu$ belongs to $\TS_{\A_d}(Z(\J) \cap \bS_d)$.
\end{proof}

Next, we explore some consequences of the previous observations.

\begin{corollary}\label{C:TNw*}
Let $\J\subset \A_d$ be a closed ideal, and let $K = Z(\J) \cap \bS_d$.  
If $\TS_{\A_d}(K)$ is closed in the weak-$*$ topology of $\A_d^*$, then $K$ is $\A_d$-totally null.
\end{corollary}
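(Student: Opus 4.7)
The plan is to reformulate the hypothesis using the bipolar theorem, deduce that $\I(K)$ is weak-$*$ dense in $\M_d$, and then exploit this density to show that every $\A_d$-Henkin measure has zero variation on $K$.

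First I would note that by Proposition~\ref{P:TSpreann} we have $(\TS_{\A_d}(K))_\perp=\I(K)$, so by the bipolar theorem the weak-$*$ closure of $\TS_{\A_d}(K)$ in $\A_d^*$ is exactly $\I(K)^\perp$. The hypothesis that $\TS_{\A_d}(K)$ is weak-$*$ closed therefore amounts to the identification
\[
\I(K)^\perp=\TS_{\A_d}(K)\qquad\text{inside } \A_d^*.
\]

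Next I would establish that $\widetilde{\I(K)}=\M_d$. Take $\Lambda\in\widetilde{\I(K)}_\perp\subset \M_{d*}$. Regarded inside $\A_d^*$ via the inclusion $\M_{d*}\hookrightarrow \A_d^*$ coming from Theorem~\ref{T:dual}, the restriction of $\Lambda$ to $\A_d$ annihilates $\I(K)\subset\widetilde{\I(K)}$, so $\Lambda\in\I(K)^\perp=\TS_{\A_d}(K)$ by the previous step. Since $\A_d^*=\M_{d*}\oplus_1\TS_{\A_d}(\bS_d)$ is a direct sum, the intersection $\M_{d*}\cap \TS_{\A_d}(\bS_d)$ is trivial, forcing $\Lambda=0$. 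Hence $\widetilde{\I(K)}_\perp=\{0\}$, which by standard duality in $\M_d$ means $\widetilde{\I(K)}=\M_d$.

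Finally I would verify that $K$ is $\A_d$-totally null. Let $\mu$ be any $\A_d$-Henkin measure on $\bS_d$. Since $\mu$ and $|\mu|$ have the same null sets, Theorem~\ref{T:band} guarantees that $|\mu|$ is also Henkin, and hence so is its restriction $\nu$ to $K$, which is a positive Henkin measure concentrated on $K$. Because every $f\in\I(K)$ vanishes on $K$, the functional $\Phi_\nu\in\M_{d*}$ annihilates $\I(K)$; being weak-$*$ continuous on $\M_d$, it therefore annihilates the weak-$*$ closure $\widetilde{\I(K)}=\M_d$. In particular,
\[
0=\Phi_\nu(1)=\nu(\bS_d)=\nu(K),
\]
and since $\nu$ is a positive measure concentrated on $K$ this forces $\nu=0$, i.e.\ $|\mu|(K)=0$. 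As $\mu$ was an arbitrary $\A_d$-Henkin measure, $K$ is $\A_d$-totally null.

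The only subtle point I foresee is keeping track of how $\M_{d*}$ and $\TS_{\A_d}(\bS_d)$ are embedded into $\A_d^*$, so that the triviality of their intersection afforded by Theorem~\ref{T:dual} can be invoked cleanly. Once that is in place, the argument is a transparent chain of duality facts, with the passage from $\mu$ to the positive measure $|\mu|$ being the little trick that converts the vanishing of the single value $\Phi_\nu(1)$ into the vanishing of the total variation on $K$.
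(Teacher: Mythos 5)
Your proof is correct and follows essentially the same duality route as the paper: the bipolar theorem together with Proposition~\ref{P:TSpreann} yields $\I(K)^\perp=\TS_{\A_d}(K)$, and the $\ell^1$-direct sum $\A_d^*=\M_{d*}\oplus_1\TS_{\A_d}(\bS_d)$ from Theorem~\ref{T:dual} then forces any Henkin functional in $\I(K)^\perp$ to vanish. Your intermediate step establishing $\widetilde{\I(K)}=\M_d$ is a small detour the paper bypasses — it argues directly that $\I(K)^\perp\cap\M_{d*}=\{0\}$ — but the two formulations encode the same duality fact, and your version has the minor virtue of already anticipating Corollary~\ref{C:I(K) dense}. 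The one place you are more careful than the printed proof is the last step: the paper simply asserts that a Henkin measure supported on $K$ "must be the zero measure," which implicitly requires passing from $\Phi_\nu=0$ to $\nu=0$; your explicit replacement of $\mu$ by the positive Henkin measure $|\mu|\big|_K$, so that the single identity $\Phi_\nu(1)=\nu(K)=0$ kills the total variation, is exactly the right way to close that small gap.
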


\begin{proof}
We have that
\[
\TS_{\A_d}(K)=(\TS_{\A_d}(K)_\perp)^\perp 
\]
whence Proposition \ref{P:TSpreann} implies that
\[
\I(K)^\perp\subset \TS_{\A_d}(\bS_d).
\]
Any $\A_d$-Henkin measure supported on $K$ lies in $\I(K)^\perp$, and by the previous inclusion it must be the zero measure. 
We conclude that $K$ is $\A_d$-totally null.
\end{proof}

In particular, taking $\J=\{0\}$ shows that $\TS_{\A_d}(\bS_d)$ is not closed in the weak-$*$ topology of $\A_d^*$ (in fact, it is weak-$*$ dense as was observed in \cite[Proposition 6.3]{CD}). 
The general problem of determining when $Z(\J)\cap \bS_d$ is $\A_d$-totally null is a difficult one and we return to it in Section \ref{S:zerosets}. 
For now, we show that the converse of Corollary \ref{C:TNw*} holds as well. In fact, the statement is true for general $\A_d$-totally null closed subsets.

\begin{theorem}\label{T:TSw*closed}
Let $\J\subset \A_d$ be a closed ideal. Let $E\subset \bS_d$ be an $\A_d$-totally null Borel set disjoint from $Z(\J)$ such that $\ol{E} \subset Z(\J) \cup E$. Then,
$
\J^\perp+\TS_{\A_d}(E)
$
is weak-$*$ closed.
\end{theorem}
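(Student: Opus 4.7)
The strategy is to apply the Krein-Smulian theorem: it suffices to show that, for each $r > 0$, the set of $\Lambda \in \J^\perp + \TS_{\A_d}(E)$ with $\|\Lambda\|_{\A_d^*} \le r$ is weak-$*$ closed. For this to be useful, we first need to know that the algebraic decomposition $\Lambda = \Psi + \Phi_\mu$ (with $\Psi \in \J^\perp$ and $\mu \in \TS_{\A_d}(E)$) is \emph{isometric}. Given $\Psi \in \J^\perp$, Theorem \ref{T:dualJ} provides a unique decomposition $\Psi = \Psi_a + \Phi_{\mu_s}$ with $\Psi_a \in \widetilde{\J}_\perp \subset \M_{d*}$ and $\mu_s \in \TS_{\A_d}(Z(\J) \cap \bS_d)$. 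Since $E$ is disjoint from $Z(\J)$, the measures $\mu_s$ and $\mu$ live on disjoint Borel subsets of $\bS_d$ and are therefore mutually singular, giving $\|\mu_s + \mu\|_{M(\bS_d)} = \|\mu_s\| + \|\mu\|$. Combined with Theorem \ref{T:dual}, this yields
\[
\|\Psi + \Phi_\mu\|_{\A_d^*} \ =\  \|\Psi_a\|_{\M_{d*}} + \|\mu_s + \mu\|_{M(\bS_d)} \ =\  \|\Psi\|_{\A_d^*} + \|\mu\|_{M(\bS_d)},
\]
so the decomposition is unique and isometric.

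To verify the Krein-Smulian hypothesis, suppose $(\Lambda_\alpha)$ is a net in $\J^\perp + \TS_{\A_d}(E)$ of norm at most $r$ converging weak-$*$ in $\A_d^*$ to some $\Lambda$. Writing $\Lambda_\alpha = \Psi_\alpha + \Phi_{\mu_\alpha}$, the isometry just established forces $\|\Psi_\alpha\|_{\A_d^*} \le r$ and $\|\mu_\alpha\|_{M(\bS_d)} \le r$. Passing to a subnet inside the weak-$*$ compact product of the two closed balls, we may assume $\Psi_\alpha \to \Psi$ in the weak-$*$ topology of $\A_d^*$ and $\mu_\alpha \to \mu$ in the weak-$*$ topology of $M(\bS_d) = \rC(\bS_d)^*$. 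Since the restriction map $\A_d \hookrightarrow \rC(\bS_d)$ is a continuous inclusion, we also have $\Phi_{\mu_\alpha} \to \Phi_\mu$ weak-$*$ in $\A_d^*$, so $\Lambda = \Psi + \Phi_\mu$. Because $\J^\perp$ is weak-$*$ closed, $\Psi$ already lies in $\J^\perp$, and the task reduces to showing $\Phi_\mu \in \J^\perp + \TS_{\A_d}(E)$.

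Here is where the hypothesis $\ol{E} \subset Z(\J) \cup E$ enters. By regularity of $\mu$ together with the fact that each $\mu_\alpha$ is concentrated on $E \subset \ol{E}$, the limit $\mu$ is concentrated on $\ol{E}$. Decompose $\mu = \mu|_E + \mu|_{\ol{E} \setminus E}$. Since $E$ is $\A_d$-totally null, any measure concentrated on $E$ is $\A_d$-totally singular, and hence $\mu|_E \in \TS_{\A_d}(E)$. On the other hand, the hypothesis combined with $E \cap Z(\J) = \varnothing$ forces $\ol{E} \setminus E \subset Z(\J) \cap \bS_d$; because every function in $\J$ vanishes on $Z(\J)$, the integration functional $\Phi_{\mu|_{\ol{E}\setminus E}}$ belongs to $\J^\perp$. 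Assembling the pieces,
\[
\Lambda \ =\ \bigl(\Psi + \Phi_{\mu|_{\ol{E}\setminus E}}\bigr) + \Phi_{\mu|_E} \ \in\ \J^\perp + \TS_{\A_d}(E),
\]
which completes the Krein-Smulian argument.

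The main subtlety will be the careful juggling of two different weak-$*$ topologies, namely those of $\A_d^*$ and of $M(\bS_d)$. Compactness in the latter is needed to produce a genuine measure in the limit (so that we can speak of its support), while the isometric decomposition coming from Theorem \ref{T:dualJ} is what guarantees that norm bounds in $\A_d^*$ translate simultaneously into boundedness of $\|\Psi_\alpha\|$ and $\|\mu_\alpha\|$. The role of the geometric hypothesis $\ol{E} \subset Z(\J) \cup E$ is then precisely to allow the ``boundary part'' of the limit measure $\mu|_{\ol{E}\setminus E}$ to be absorbed into $\J^\perp$.
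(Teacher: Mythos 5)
Your proof is correct and follows essentially the same route as the paper's: a Krein--Smulian argument in which Theorem \ref{T:dualJ} is used to refine the decomposition $\Lambda_\alpha=\Psi_\alpha+\Phi_{\mu_\alpha}$ and extract a uniform norm bound on $\mu_\alpha$, after which a weak-$*$ compactness argument in $M(\bS_d)$ produces a limit measure $\mu$ concentrated on $\ol{E}$ that is split according to $E$ and $\ol{E}\setminus E\subset Z(\J)\cap\bS_d$. The only cosmetic difference is that you establish the full isometric identity $\|\Psi+\Phi_\mu\|=\|\Psi\|+\|\mu\|$, whereas the paper only needs (and only records) the one-sided inequality $\|\mu_\alpha\|\le\|\mu_\alpha+\nu_\alpha\|\le\|\Psi_\alpha\|$.
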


\begin{proof}
Put $X=Z(\J)$ and $X_0 = X \cap \bS_d$. 
By the Krein-Smulyan theorem, it suffices to show that for every net $\{\Psi_{\alpha}\}_{\alpha}\subset \J^\perp+\TS_{\A_d}(E)$ with $\|\Psi_{\alpha}\|\leq 1$ which converges to $\Psi\in \A_d^*$ in the weak-$*$ topology, we have that 
$\Psi$ belongs to $\J^\perp+\TS_{\A_d}(E)$.
Now, for each $\alpha$ we can write 
\[
\Psi_\alpha = \Theta_\alpha + \Phi_{\mu_\alpha}
\]
for some $\Theta_{\alpha}\in \J^\perp$ and $\mu_\alpha\in \TS_{\A_d}(E)$.
Furthemore, by Theorem \ref{T:dualJ} we have
\[
\J^\perp = \widetilde\J_\perp \oplus_1 \TS_{\A_d}(X_0) ;
\]
so that for every $\alpha$ we may write
\[
\Theta_{\alpha}=\Lambda_{\alpha}\oplus \Phi_{\nu_{\alpha}}
\] 
for some $\Lambda_{\alpha}\in \widetilde{\J}_\perp$ and $\nu_\alpha\in \TS_{\A_d}(X_0)$.
Thus,
\[
\Psi_\alpha = \Lambda_\alpha \oplus \Phi_{\mu_\alpha + \nu_\alpha}.
\]
Since the measures $\mu_{\alpha}$ and $\nu_{\alpha}$ are concentrated on disjoint Borel sets, we have that
\[
\|\mu_{\alpha}\|\leq \|\mu_{\alpha}+\nu_\alpha\|\leq \|\Psi_{\alpha}\|\leq 1.
\]
Therefore, upon passing to a subnet we may suppose that $\{\mu_\alpha\}_\alpha$ converges to a measure $\mu$ concentrated on $\ol{E}\subset X\cup E$ in the weak-$*$ topology of $M(\bS_d)$.
Hence, $\{\Theta_\alpha\}_\alpha$ converges to $\Psi-\Phi_\mu$ in the weak-$*$ topology of $\A_d^*$. Since $\J^\perp\subset \A_d^*$  is weak-$*$ closed we see that $\Psi-\Phi_\mu\in \J^\perp$. Finally, note that we can decompose $\mu = \mu_1 + \mu_2$ where $\mu_1$ is concentrated on $E$ and $\mu_2$ is concentrated on $X_0$.
Since $E$ is assumed to be $\A_d$-totally null, we have $\mu_1\in \TS_{\A_d}(E)$; while $\Phi_{\mu_2} \in \J^\perp$ since $X_0\subset X=Z(\J)$. 
Hence
\[
\Psi-\Phi_{\mu}+\Phi_{\mu_2}\in \J^\perp
\]
and
\[
\Psi = (\Psi - \Phi_\mu + \Phi_{\mu_2}) + \Phi_{\mu_1}  \in \J^\perp+\TS_{\A_d}(E) .
\]
Thus $\J^\perp+\TS_{\A_d}(E)$ is closed in the weak-$*$ topology of $\A_d^*$.
\end{proof}

Note that upon taking $\J=\A_d$ in the theorem above, we see that $\TS_{\A_d}(E)$ is closed in the weak-$*$ topology of $\A_d^*$ whenever $E\subset \bS_d$ is a closed $\A_d$-totally null subset. We obtain the following density result as an immediate consequence.

\begin{cor}\label{C:I(K) dense}
Let $K\subset \bS_d$ be a closed set. 
Then, $K$ is $\A_d$-totally null if and only if $\I(K)$ is weak-$*$ dense in $\M_d$.
\end{cor}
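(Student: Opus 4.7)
The plan is to exploit the duality decomposition $\A_d^* = \M_{d*}\oplus_1 \TS_{\A_d}(\bS_d)$ from Theorem \ref{T:dual}, combined with the identity $\TS_{\A_d}(K)_\perp = \I(K)$ from Proposition \ref{P:TSpreann}, so as to translate the question of weak-$*$ density of $\I(K)$ in $\M_d$ into a statement about weak-$*$ closedness of $\TS_{\A_d}(K)$ in $\A_d^*$---for which Theorem \ref{T:TSw*closed} already provides the key input.

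For the forward direction, I would assume $K$ is $\A_d$-totally null and apply Theorem \ref{T:TSw*closed} with $\J=\A_d$ (so that $Z(\J)=\emptyset$) and $E=K$; the hypotheses hold because $K$ is closed and $\A_d$-totally null. As noted in the remark immediately after that proof, this gives that $\TS_{\A_d}(K)$ is weak-$*$ closed in $\A_d^*$. The bipolar theorem then upgrades Proposition \ref{P:TSpreann} to the equality $\I(K)^\perp = \TS_{\A_d}(K)$ in $\A_d^*$. Since this annihilator sits inside $\TS_{\A_d}(\bS_d)$, it meets the complementary summand $\M_{d*}$ only at $0$, so no nonzero element of $\M_{d*}$ annihilates $\I(K)$. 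By Hahn-Banach, $\I(K)$ is weak-$*$ dense in $\M_d$.

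For the converse, I would suppose $\I(K)$ is weak-$*$ dense in $\M_d$ and show that every $\A_d$-Henkin measure $\mu$ satisfies $|\mu|(K)=0$. Arguing by contradiction and using the band stability of Theorem \ref{T:band} to pass to the nonzero Henkin restriction $\mu|_K$, I may assume $\mu$ is nonzero, Henkin, and concentrated on $K$. The idea is to promote weak-$*$ density to a quantitative conclusion by twisting $\mu$: for every bounded Borel function $f$ on $\bS_d$, the measure $f\mu$ is absolutely continuous with respect to $\mu$, hence $\A_d$-Henkin by Theorem \ref{T:band}, and it remains concentrated on $K$. Therefore $\Phi_{f\mu}\in\M_{d*}$ annihilates $\I(K)$; weak-$*$ continuity together with the density hypothesis forces $\Phi_{f\mu}\equiv 0$ on all of $\M_d$, and evaluating at the constant function $1$ yields $\int f\,d\mu = 0$. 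Since this holds for every bounded Borel $f$, I conclude $\mu=0$, a contradiction.

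The delicate step is the converse. It is tempting but insufficient to apply the density argument only to $\mu$ itself: that would merely yield $\int\phi\,d\mu=0$ for $\phi\in\A_d$, which is too weak, because Henkin measures can annihilate the algebra nontrivially (as in the classical $A(\bB_d)$ setting, where analytic measures in $H^1_0$ provide such examples). The remedy is to run the weak-$*$ continuity argument simultaneously for every twist $f\mu$, and this is available precisely because the $\A_d$-Henkin measures form a band under absolute continuity.
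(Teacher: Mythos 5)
Your proof is correct and follows essentially the same route as the paper's: both directions hinge on $\TS_{\A_d}(K)_\perp=\I(K)$, the weak-$*$ closedness of $\TS_{\A_d}(K)$ obtained from Theorem \ref{T:TSw*closed} with $\J=\A_d$, and the direct-sum decomposition of $\A_d^*$. The only difference worth noting is in the ``density implies totally null'' direction, where the paper compresses the argument to the single line that any Henkin measure supported on $K$ has vanishing functional and hence is zero. You fill this in more carefully with the band-twist by bounded Borel $f$; the paper's implicit shortcut is simply to apply the same reasoning to $|\mu|\big|_K$, which is a positive Henkin measure supported on $K$ by Theorem \ref{T:band}, so that evaluating its (vanishing) functional at the constant $1$ directly yields $|\mu|(K)=0$. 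Either way the band property of Theorem \ref{T:band} is what closes the gap you correctly identified, and your added explanation of why one cannot just test $\mu$ against $\A_d$ is a genuine and useful clarification.
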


\begin{proof}
Let $\J$ be the weak-$*$ closure of $\I(K)$ in $\M_d$ and note that 
\[
\J_\perp =\M_{d*} \cap  \I(K)^\perp.
\]

Assume first that $\I(K)$ is weak-$*$ dense in $\M_d$, that is $\J=\M_d$. Any $\A_d$-Henkin measure supported on $K$ lies in $\I(K)^\perp\cap \M_{d*}=\J_\perp=\{0\}$; so that $K$ is $\A_d$-totally null. 

Assume conversely that $K$ is $\A_d$-totally null.
Now, we have that 
\[
\TS_{\A_d}(K)_\perp=\I(K)
\]
by Proposition \ref{P:TSpreann}. By Theorem \ref{T:TSw*closed} we see that
\[
\I(K)^\perp=(\TS_{\A_d}(K)_\perp)^\perp=\TS_{\A_d}(K)
\]
is disjoint from $\M_{d*}$; whence $\J_\perp=\{0\}$ and $\J = (\J_\perp)^\perp = \M_d$.
\end{proof}

In fact a stronger density statement holds.

\begin{cor}\label{C:I(K) dense ball}
Let $K\subset \bS_d$ be a closed $\A_d$-totally null subset. 
Then the unit ball of $\I(K)$ is weak-$*$ dense in the unit ball of $\M_d$.
\end{cor}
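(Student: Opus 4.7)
The plan is a clean duality argument resting on two ingredients already established: the $\ell^1$ direct sum decomposition $\A_d^* = \M_{d*} \oplus_1 \TS_{\A_d}(\bS_d)$ (Theorem \ref{T:dual}) and the weak-$*$ density $\widetilde{\I(K)} = \M_d$ (Corollary \ref{C:I(K) dense}). By the bipolar theorem in the dual pair $(\M_d, \M_{d*})$ applied to the absolutely convex unit ball of $\I(K)$, its weak-$*$ closure in $\M_d$ equals the unit ball of $\M_d$ if and only if the polar in $\M_{d*}$ coincides with the unit ball of $\M_{d*}$; equivalently, for every $\Lambda \in \M_{d*}$,
\[
\|\Lambda|_{\I(K)}\|_{\I(K)^*} \;=\; \|\Lambda\|_{\M_{d*}}.
\]
The inequality $\leq$ is immediate from the isometric embedding $\M_{d*} \hookrightarrow \A_d^*$ given by Theorem \ref{T:dual}, so I would focus on the reverse inequality.

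To compute the restriction norm, I would use the Hahn-Banach quotient formula
\[
\|\Lambda|_{\I(K)}\| \;=\; \inf\bigl\{\|\Lambda + \Phi\|_{\A_d^*} : \Phi \in \I(K)^\perp\bigr\},
\]
which reduces the task to describing $\I(K)^\perp \subset \A_d^*$. Here I would invoke Theorem \ref{T:dualJ} with $\J = \I(K)$: any $\Phi \in \I(K)^\perp$ splits as $\Phi = \Phi_a + \Phi_{\mu_s}$ with $\Phi_a \in \widetilde{\I(K)}_\perp$ and $\mu_s \in \TS_{\A_d}(\bS_d)$. Since Corollary \ref{C:I(K) dense} forces $\widetilde{\I(K)} = \M_d$, we must have $\Phi_a = 0$; hence every element of $\I(K)^\perp$ has the form $\Phi_{\mu_s}$ for some $\mu_s \in \TS_{\A_d}(\bS_d)$.

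The $\ell^1$ nature of the decomposition in Theorem \ref{T:dual} then closes the argument: for $\Lambda \in \M_{d*}$ and any such $\mu_s$,
\[
\|\Lambda + \Phi_{\mu_s}\|_{\A_d^*} \;=\; \|\Lambda\|_{\M_{d*}} + \|\mu_s\|_{M(\bS_d)} \;\geq\; \|\Lambda\|_{\M_{d*}},
\]
with equality at $\mu_s = 0$. The infimum is therefore exactly $\|\Lambda\|_{\M_{d*}}$, which finishes the proof. I do not foresee a genuine obstacle: the whole argument is a short chain of deductions from the machinery already in place, and the heavy lifting is done by Theorem \ref{T:dual} together with the preceding Corollary \ref{C:I(K) dense}. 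If anything is subtle, it is only bookkeeping regarding which weak-$*$ topology is meant at each step (the $\sigma(\M_d, \M_{d*})$ topology on $\M_d$ versus the $\sigma(\A_d^*, \A_d)$ topology on $\A_d^*$), but this causes no real difficulty.
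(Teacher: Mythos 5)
Your argument is correct, and it reaches the conclusion by a genuinely different route from the paper's. The paper first identifies $\I(K)^\perp = \TS_{\A_d}(K)$ exactly (via Proposition~\ref{P:TSpreann} and the weak-$*$ closedness from Theorem~\ref{T:TSw*closed}, a computation it inherits from the proof of Corollary~\ref{C:I(K) dense}), then computes $\I(K)^* = \M_{d*}\oplus_1 (\TS_{\A_d}(\bS_d)/\TS_{\A_d}(K))$ and the bidual $\I(K)^{**} = \M_d \oplus \TS_{\A_d}(K)^\perp$, and finishes by applying Goldstine's theorem to exhibit the unit ball of $\M_d$ as a weak-$*$ closure. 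You bypass the bidual entirely: the bipolar theorem reduces the density claim to the single isometric identity $\|\Lambda|_{\I(K)}\|_{\I(K)^*} = \|\Lambda\|_{\M_{d*}}$ for every $\Lambda\in\M_{d*}$, which you verify via the quotient-norm formula after observing from Theorem~\ref{T:dualJ} and Corollary~\ref{C:I(K) dense} that $\I(K)^\perp$ sits inside the $\TS_{\A_d}(\bS_d)$ summand. Both proofs ultimately rest on the $\ell^1$ decomposition of Theorem~\ref{T:dual} and are comparably short. Your version has the advantage of not needing the sharp equality $\I(K)^\perp = \TS_{\A_d}(K)$ (the inclusion $\I(K)^\perp \subset \TS_{\A_d}(\bS_d)$ suffices) and of making the decisive norm identity explicit; the paper's version gives a cleaner structural statement about $\I(K)^{**}$, namely that $\M_d$ appears as a direct summand.
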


\begin{proof}
As observed in the previous proof, we have
\[
\I(K)^\perp =\TS_{\A_d}(K) \subset \TS_{\A_d}(\bS_d).
\]
Therefore, by Theorem \ref{T:dual} we conclude that
\[
 \I(K)^* = \A_d^*/\I(K)^\perp = \M_{d*} \oplus_1 (\TS_{\A_d}(\bS_d)/\TS_{\A_d}(K)) .
\]
Consequently we obtain that
\[
 \I(K)^{**} =\M_d \oplus \TS_{\A_d}(K)^{\perp} .
\]
The desired result is now a consequence of Goldstine's theorem.
\end{proof}

We close this section with another interesting density result that depends on duality techniques. The other tool we need for its proof is the following, which is of independent interest. Given $\lambda\in \ol{\bB_d}$, we denote by $\tau_{\lambda}\in \A_d^*$ the functional of evaluation at $\lambda$.

\begin{theorem}\label{T:isometricquotient}
Let $K\subset \bS_d$ be a closed $\A_d$-totally null subset and let $\lambda\in \bB_d$. Then, 
\[
\I(K\cup \{\lambda\})^\perp=\TS_{\A_d}(K)\oplus_1\bC \tau_{\lambda}
\]
and there exists a surjective isometric isomorphism
\[
\rho: \A_d/\I(K\cup\{\lambda\}) \to \rC(K\cup\{\lambda\})
\]
given by the restriction map
\[
\rho(\phi+\I(K\cup\{\lambda\}))=\phi|_{K\cup\{\lambda\}} \qfor \phi\in \A_d .
\]
\end{theorem}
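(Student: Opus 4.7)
The plan is to prove (1) first via the dual space decomposition (Theorem~\ref{T:dual}) combined with Theorem~\ref{T:dualJ}, and then to deduce the isometric identification in (2) from (1) by a duality argument, using peak interpolation only to produce auxiliary functions and to get surjectivity. The inclusion $\TS_{\A_d}(K) \oplus_1 \bC \tau_\lambda \subset \I(K \cup \{\lambda\})^\perp$ is immediate: $\TS_{\A_d}(K)$ annihilates $\I(K) \supset \I(K \cup \{\lambda\})$ by Proposition~\ref{P:TSpreann}, while $\tau_\lambda$ vanishes on $\I(K \cup \{\lambda\})$ by construction. Since $\tau_\lambda \in \M_{d*}$ (evaluation at the interior point $\lambda$ is weak-$*$ continuous), Theorem~\ref{T:dual} forces the sum to be a direct $\ell^1$-sum. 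For the reverse inclusion, write $\Psi \in \I(K \cup \{\lambda\})^\perp$ as $\Psi = \Psi_a + \Phi_{\mu_s}$ and apply Theorem~\ref{T:dualJ} with $\J = \I(K \cup \{\lambda\})$. A short peak interpolation argument shows $Z(\J) \cap \bS_d = K$: given $\zeta \in \bS_d \setminus K$, apply Theorem~\ref{T:peakinterpolation} on the closed $\A_d$-totally null set $K \cup \{\zeta\}$ to produce $\phi \in \A_d$ with $\phi|_K = 0$ and $\phi(\zeta) = 1$, then multiply by a linear polynomial vanishing at $\lambda$ but not at $\zeta$ to obtain an element of $\I(K \cup \{\lambda\})$ which is nonzero at $\zeta$. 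Hence $\mu_s \in \TS_{\A_d}(K)$.

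It thus remains to identify $\widetilde{\I(K \cup \{\lambda\})}_\perp$ with $\bC \tau_\lambda$, which is the main technical step and the expected obstacle. Concretely, I would show $\widetilde{\I(K \cup \{\lambda\})} = \{\phi \in \M_d : \phi(\lambda) = 0\}$. The containment $\subset$ is immediate from the weak-$*$ continuity of $\tau_\lambda$. For the reverse, apply Theorem~\ref{T:peakinterpolation} with $f \equiv 1$ on $K$ to produce $\phi_0 \in \A_d$ with $\phi_0|_K = 1$ and $|\phi_0(\lambda)| < 1$; the function $g := (1 - \phi_0)/(1 - \phi_0(\lambda))$ then lies in $\I(K)$ with $g(\lambda) = 1$. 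Given $\phi \in \M_d$ with $\phi(\lambda) = 0$ and $\|\phi\|_{\M_d} \leq 1$, Corollary~\ref{C:I(K) dense ball} produces a net $\phi_\alpha \in \I(K)$ with $\|\phi_\alpha\|_{\M_d} \leq 1$ and $\phi_\alpha \to \phi$ weak-$*$; the corrected net $\psi_\alpha := \phi_\alpha - \phi_\alpha(\lambda) g$ lies in $\I(K \cup \{\lambda\})$ and still converges weak-$*$ to $\phi$ because $\phi_\alpha(\lambda) \to \phi(\lambda) = 0$. Passing to preannihilators in $\M_{d*}$ identifies $\widetilde{\I(K \cup \{\lambda\})}_\perp$ with $\bC \tau_\lambda$, completing (1).

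For (2), well-definedness and the inequality $\|\rho(\phi + \I(K \cup \{\lambda\}))\|_\infty \leq \|\phi + \I(K \cup \{\lambda\})\|$ are immediate from $\|\cdot\|_\infty \leq \|\cdot\|_{\M_d}$. Surjectivity uses the same recipe as above: given $h \in \rC(K \cup \{\lambda\})$, extend $h|_K$ to some $\phi_1 \in \A_d$ via Theorem~\ref{T:peakinterpolation}, then replace it by $\phi_1 + (h(\lambda) - \phi_1(\lambda)) g$ to match the prescribed value at $\lambda$. The reverse norm inequality is extracted from (1) by duality. Since $K$ is closed and $\A_d$-totally null, every measure on $K$ is $\A_d$-totally singular, so $\TS_{\A_d}(K)$ coincides isometrically with $M(K)$. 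The quotient norm of $\phi + \I(K \cup \{\lambda\})$ thus equals
\[ \sup\bigl\{\, |c\,\phi(\lambda) + \textstyle\int \phi\, d\mu|\ :\ c \in \bC,\ \mu \in M(K),\ |c| + \|\mu\|_{M(\bS_d)} \leq 1 \,\bigr\}, \]
and because $\lambda \notin K$ this is precisely $\|\phi|_{K \cup \{\lambda\}}\|_\infty$ via the standard duality $\rC(K \cup \{\lambda\})^* = M(K) \oplus_1 \bC \delta_\lambda$.
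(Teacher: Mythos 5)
Your proof is correct, and both parts of your argument lead to the stated conclusions. However, you take a genuinely different route from the paper, and it is worth comparing.

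For the annihilator identification, the paper argues via a double-annihilator: it invokes Theorem~\ref{T:TSw*closed} (with $\J=\A_d$) to get that $\TS_{\A_d}(K)$ is weak-$*$ closed, notes that adding the one-dimensional space $\bC\tau_\lambda$ preserves weak-$*$ closedness, computes the preannihilator to be $\I(K)\cap\I(\{\lambda\})=\I(K\cup\{\lambda\})$ via Proposition~\ref{P:TSpreann}, and then takes annihilators again. This avoids any need to identify $\widetilde{\I(K\cup\{\lambda\})}$. You instead apply Theorem~\ref{T:dualJ} directly to $\J=\I(K\cup\{\lambda\})$, which forces you to verify two things the paper never touches: that $Z(\J)\cap\bS_d=K$ (your peak interpolation argument for this is fine, since $K\cup\{\zeta\}$ is indeed closed and totally null and your polynomial correction handles the interior point $\lambda$), and that $\widetilde{\I(K\cup\{\lambda\})}=\{\phi\in\M_d:\phi(\lambda)=0\}$. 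Your proof of the latter --- building a correcting function $g\in\I(K)$ with $g(\lambda)=1$ and then perturbing a net supplied by Corollary~\ref{C:I(K) dense ball} --- is a nice concrete computation that the paper sidesteps entirely. Note, though, that this does not really bypass Theorem~\ref{T:TSw*closed}: Corollary~\ref{C:I(K) dense ball} is proved from it, so the logical dependency is the same, just routed differently.

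For the isometry and surjectivity of $\rho$, the two proofs are essentially the same in spirit. Both compute the quotient norm by duality from the $\ell^1$-decomposition of $\I(K\cup\{\lambda\})^\perp$, using the fact that $M(K)=\TS_{\A_d}(K)$ when $K$ is totally null. The one genuine divergence is surjectivity: the paper notes that $(\A_d/\I(K\cup\{\lambda\}))^*=\rC(K\cup\{\lambda\})^*$ isometrically and invokes Hahn--Banach abstractly, whereas you produce the required extension constructively via Theorem~\ref{T:peakinterpolation} together with the auxiliary function $g$. Your version is longer but more explicit, and it is easy to see that it implies the surjectivity statement. In summary, the proposal is correct; it buys a concrete description of $\widetilde{\I(K\cup\{\lambda\})}$ at the cost of additional technical steps that the paper's cleaner double-annihilator argument avoids.
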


\begin{proof}
By Proposition \ref{P:TSpreann}, we have that
\[
\TS_{\A_d}(K)_\perp=\I(K).
\] 
Invoking Theorem \ref{T:TSw*closed}, we see that $\TS_{\A_d}(K)$ is closed in the weak-$*$ topology of $\A_d^*$ and thus
\[
\TS_{\A_d}(K)+\bC \tau_{\lambda}
\]
is closed in the weak-$*$ topology of $\A_d^*$. Moreover,
\begin{align*}
(\TS_{\A_d}(K)+\bC \tau_{\lambda})_\perp&=\TS_{\A_d}(K)_\perp\cap (\bC \tau_{\lambda})_\perp\\
&=\I(K)\cap \I(\{\lambda\})\\
&=\I(K\cup \{\lambda\}).
\end{align*}
 Hence
\[
\I(K\cup \{\lambda\})^\perp=\TS_{\A_d}(K)+\bC \tau_{\lambda}=\TS_{\A_d}(K)\oplus_1 \bC \tau_{\lambda}
\]
where the last equality follows from Theorem \ref{T:dual}, since  $\lambda\in \bB_d$ and therefore $\tau_\lambda\in \M_{d*}$. The map $\rho$ is clearly a well-defined homomorphism, so we need only show that it is isometric and surjective.

Recall now that $\delta_{\zeta}\in \TS_{\A_d}(K)$ for every $\zeta\in K$, so that
\[
\sup_{z\in K\cup \{\lambda\}}|\phi(z)|\leq \sup\left\{ |\Psi(\phi)|:\Psi\in \I(K\cup \{\lambda\})^\perp, \|\Psi\|\leq 1\right\}
\]
whence
\[
\sup_{z\in K\cup \{\lambda\}}|\phi(z)|\leq \|\phi+\I(K\cup \{\lambda\})\|_{\A_d/\I(K\cup \{\lambda\}}
\]
for every $\phi\in \A_d$.

Conversely, if $\Psi\in \I(K\cup \{\lambda\})^\perp$ with $\|\Psi\|\leq 1$, we can write
$\Psi=c\tau_{\lambda}+\Phi_{\mu}$ with $c\in \bC, \mu\in \TS_{\A_d}(K)$ and 
\[
 1 \ge \|c\tau_{\lambda}+\Phi_{\mu}\|=\|c\tau_{\lambda}\|+\|\Phi_{\mu}\|=|c|+\|\mu\| .
\]
Hence,
\[
|\Psi(\phi)|\leq |c||\phi(\lambda)|+\|\mu\| \sup_{z\in K}|\phi(z)|\leq \sup_{z\in K\cup\{\lambda\}}|\phi(z)| .
\]
Therefore
\[
\|\phi+\I(K\cup\{\lambda\})\|\leq \sup_{z\in K\cup\{\lambda\}}|\phi(z)|
\]
for every $\phi\in \A_d$, and equality is established; so the map $\rho$ is isometric.

To see that $\rho$ is surjective, observe that the dual space of $\A_d/\I(K\cup\{\lambda\})$ is naturally identified with 
\[\I(K\cup \{\lambda\})^\perp=\TS_{\A_d}(K)\oplus_1\bC \tau_{\lambda} .\]
However since $K$ is $\A_d$-totally null, $M(K) = \TS_{\A_d}(K)$ and thus 
\[ \A_d/\I(K\cup\{\lambda\})^* = M(K \cup\{\lambda\}) = \rC(K \cup\{\lambda\})^* .\]
It follows from the Hahn-Banach theorem that 
\[ \rho(\A_d/\I(K\cup\{\lambda\})) = \rC(K\cup\{\lambda\})  .\qedhere \]
\end{proof}

As a consequence, we obtain the following interpolation result for functions in $\A_d$.

\begin{cor}\label{C:interpolation}
Let  $K\subset \bS_d$ be a closed $\A_d$-totally null subset and $\lambda \in \bB_d$. 
Then the restriction of the open unit ball of $\A_d$ to $K \cup \{\lambda\}$ coincides with the open unit ball of $\rC(K \cup \{\lambda\})$.
\end{cor}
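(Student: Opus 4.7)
The statement asserts two containments between subsets of $\rC(K \cup \{\lambda\})$. One direction is essentially free from the basic comparison $\|\phi\|_\infty \le \|\phi\|_{\M_d}$: any $\phi \in \A_d$ with $\|\phi\|_{\M_d} < 1$ satisfies $\|\phi|_{K \cup \{\lambda\}}\|_\infty \le \|\phi\|_\infty < 1$, so its restriction lies in the open unit ball of $\rC(K \cup \{\lambda\})$.

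The substantive content is the reverse inclusion: given $f \in \rC(K \cup \{\lambda\})$ with $\|f\|_\infty < 1$, we must produce $\phi \in \A_d$ with $\|\phi\|_{\M_d} < 1$ and $\phi|_{K \cup \{\lambda\}} = f$. The plan is to invoke Theorem \ref{T:isometricquotient} directly. That theorem provides the isometric isomorphism
\[
\rho : \A_d/\I(K\cup\{\lambda\}) \longrightarrow \rC(K\cup\{\lambda\})
\]
given by restriction. Since $\rho$ is isometric and surjective, the coset $\rho^{-1}(f)$ has quotient norm exactly $\|f\|_\infty < 1$.

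Unwinding the definition of the quotient norm, we have
\[
\|f\|_\infty \;=\; \inf\bigl\{\|\phi\|_{\M_d} : \phi \in \A_d,\ \phi|_{K \cup \{\lambda\}} = f\bigr\}.
\]
Choosing any $\eps \in (0,\, 1 - \|f\|_\infty)$, the definition of infimum yields a representative $\phi \in \A_d$ with $\phi|_{K \cup \{\lambda\}} = f$ and $\|\phi\|_{\M_d} < \|f\|_\infty + \eps < 1$, which is precisely the desired interpolant.

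Since both directions are immediate from already-established results, there is no genuine obstacle here; the corollary is essentially a rephrasing of Theorem \ref{T:isometricquotient} in terms of open unit balls. The real work has been carried out in the preceding duality arguments (Proposition \ref{P:TSpreann}, Theorem \ref{T:TSw*closed}, and the identification of $\TS_{\A_d}(K)$ with $M(K)$ on totally null sets) that underpin that theorem.
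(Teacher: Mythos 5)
Your proof is correct and follows the same route as the paper: both invoke Theorem \ref{T:isometricquotient} to get the isometric isomorphism $\rho$, then use the definition of the quotient norm to select a representative $\psi\in\A_d$ with $\|\psi\|_{\M_d}<1$ restricting to $f$.
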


\begin{proof}
Let $q:\A_d\to\A_d/\I(K\cup\{\lambda\})$ be the quotient map. 
By Theorem \ref{T:isometricquotient},  the restriction map
\[
\rho:\A_d/\I(K\cup\{\lambda\})\to \rC(K\cup\{\lambda\})
\]
is an isometric isomorphism.
In particular, the open unit ball of $\rho(q(\A_d))$ coincides with the open unit ball of $\rC(K \cup \{\lambda\})$.
Let $f\in \rC(K\cup\{\lambda\})$ such that $\|f\|<1$. 
Then, there is $\phi\in \A_d$ such that $\phi|_{K\cup \{\lambda\}}=f$ and $\|q(\phi)\|=\|f\|$. 
By definition of the quotient norm we can find $\psi\in \A_d$ such that $\|\psi\|<1$ and $\psi|_{K\cup\{\lambda\}}=f$, which completes the proof.
\end{proof}

In Section 5, we obtain a similar result for interpolating sequences (Theorem~\ref{T:interp}).

It would be interesting to know if a full analogue of Theorem \ref{T:peakinterpolation} holds for $K\cup\{\lambda\}$. 
Furthermore, it would be of interest to understand what happens when $\{\lambda\}$ is replaced by an arbitrary finite set $F\subset \bB_d$. 
The restriction of $\A_d$ (or even $\AB$) to a two point set $F = \{\lambda,\mu\}\subset \bB_d$ cannot be isometric to $\rC(F)$ because of Schwarz's lemma. The complete Nevanlinna-Pick property governs the norm of a multiplier that achieves specific values at these two points.

\section{Description of the ideals of $\A_d$}\label{S:ideals}

In this short section, we provide the structure theorem for ideals of $\A_d$. 
This result is a natural analogue of Theorems \ref{T:carlesonrudin} and \ref{T:hedenmalm}. 
Recall that if $\J\subset \A_d$, then $\widetilde{\J}\subset \M_d$ denotes its closure in the weak-$*$ topology of $\M_d$.

\begin{thm}\label{T: ideals of A_d}
Let $\J\subset \A_d$ be a closed ideal, and let $K = Z(\J)\cap \bS_d$.
Then 
\[ \J = \I(K) \cap \widetilde\J .\]
\end{thm}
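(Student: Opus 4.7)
The plan is to prove the non-trivial inclusion $\I(K)\cap\widetilde{\J}\subset\J$ by a duality / Hahn--Banach argument, with Theorem \ref{T:dualJ} doing the bulk of the work. The reverse inclusion $\J\subset\I(K)\cap\widetilde\J$ is immediate: every $\phi\in\J$ vanishes on $Z(\J)\supset K$, so $\phi\in\I(K)$, and $\J\subset\widetilde\J$ trivially.

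For the harder direction, I would fix $\phi\in\I(K)\cap\widetilde{\J}$ and show that $\Psi(\phi)=0$ for every $\Psi\in\J^\perp\subset\A_d^*$; since $\J$ is norm-closed, the identity $(\J^\perp)_\perp=\J$ then forces $\phi\in\J$. Given such $\Psi$, I invoke Theorem \ref{T:dualJ} to write
\[
\Psi=\Psi_a+\Phi_{\mu_s},\qquad \Psi_a\in\widetilde{\J}_\perp\subset\M_{d*},\quad \mu_s\in\TS_{\A_d}(Z(\J)\cap\bS_d)=\TS_{\A_d}(K).
\]
Then $\Psi_a(\phi)=0$ because $\phi\in\widetilde{\J}$ and $\Psi_a$ belongs to the preannihilator of $\widetilde{\J}$ in $\M_{d*}$, while
\[
\Phi_{\mu_s}(\phi)=\int_{\bS_d}\phi\,d\mu_s=\int_{K}\phi\,d\mu_s=0
\]
since $\mu_s$ is concentrated on $K$ and $\phi\in\I(K)$ vanishes on $K$. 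Adding these gives $\Psi(\phi)=0$, completing the argument.

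There is essentially no technical obstacle in this step: the theorem is really a packaging of Theorem \ref{T:dualJ} together with the definitions of $\I(K)$ and $\TS_{\A_d}(K)$. The only thing I would be careful about is the interpretation of $\widetilde{\J}_\perp$ — namely that it denotes the preannihilator of $\widetilde{\J}$ inside $\M_{d*}$, so that $\Psi_a$ does vanish on all of $\widetilde{\J}$, not merely on $\J$; this is exactly what Theorem \ref{T:dualJ} provides. No approximation, interpolation, or Krein--Smulyan argument is required here; the earlier machinery (especially the F.\ and M.\ Riesz type decomposition) has already absorbed the difficulty.
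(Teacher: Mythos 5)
Your proof is correct and follows essentially the same route as the paper: both arguments rest entirely on the F.\ and M.\ Riesz type decomposition $\J^\perp = \widetilde{\J}_\perp \oplus_1 \TS_{\A_d}(K)$ from Theorem \ref{T:dualJ} together with Proposition \ref{P:TSpreann}. The paper simply compresses your element-by-element annihilator check into the single preannihilator computation $\J = (\J^\perp)_\perp = (\widetilde{\J}_\perp)_\perp \cap \TS_{\A_d}(K)_\perp = (\widetilde{\J}\cap\A_d)\cap\I(K)$.
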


\begin{proof}
By Theorem \ref{T:dualJ}, we have that 
\[
\J^\perp = \widetilde{\J}_\perp \oplus_1 \TS_{\A_d}(K).
\]
We see that
\[
(\widetilde{\J}_{\perp})_\perp=\{\phi\in \A_d: \Psi(\phi)=0 \qforal \Psi\in \widetilde{\J}_\perp\}=\widetilde{\J}\cap \A_d.
\]
Moreover,
\[
\I(K)= \TS_{\A_d}(K)_\perp
\]
by Proposition \ref{P:TSpreann}.
We conclude that
\begin{align*}
 \J &= (\J^\perp)_\perp = (\widetilde{\J}_\perp)_\perp\cap \TS_{\A_d}(K)_\perp \\
 &= (\widetilde\J \cap \A_d) \cap \I(K)\\
 &=\widetilde{\J}\cap \I(K). \qedhere
\end{align*}
\end{proof}

As a consequence, we obtain the following.

{\samepage 
\begin{corollary}\label{C:Jtilde}
Let $\J\subset \A_d$ be a closed ideal, and let $\widetilde{\J}\subset \M_d$ be its weak-$*$ closure. 
Then, the following statements are equivalent:
\begin{enumerate}
\item[\rm{(i)}] $\widetilde{\J}\cap \A_d=\J$,

\item[\rm{(ii)}] $Z(\widetilde{\J}\cap \A_d)=Z(\J)$, and

\item[\rm{(iii)}] $Z(\widetilde{\J}\cap \A_d)\cap \bS_d=Z(\J)\cap \bS_d$.
\end{enumerate}
\end{corollary}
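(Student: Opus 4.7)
The three statements form a cycle, and two of the implications are essentially free. Specifically, (i) $\Rightarrow$ (ii) is immediate since equal ideals have equal zero sets, and (ii) $\Rightarrow$ (iii) follows by intersecting both sides with $\bS_d$. So the content of the corollary is the implication (iii) $\Rightarrow$ (i), which I propose to establish by applying the main structure theorem (Theorem~\ref{T: ideals of A_d}) to an auxiliary ideal.

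The plan is to introduce $\J' := \widetilde{\J}\cap \A_d$. First I would verify that $\J'$ is a closed ideal of $\A_d$: it is evidently a norm-closed subspace, and since multiplication in $\M_d$ is separately weak-$*$ continuous, $\widetilde{\J}$ is a weak-$*$ closed ideal of $\M_d$, whence $\J'=\widetilde{\J}\cap \A_d$ is an ideal of $\A_d$. Next I would show that the weak-$*$ closure of $\J'$ in $\M_d$ is again $\widetilde{\J}$. This comes from the sandwich $\J \subset \J' \subset \widetilde{\J}$: taking weak-$*$ closures in $\M_d$ and using that $\widetilde{\J}$ is itself weak-$*$ closed yields $\widetilde{\J}\subset \widetilde{\J'}\subset \widetilde{\J}$.

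With these preliminaries in hand, I would apply Theorem~\ref{T: ideals of A_d} to both $\J$ and $\J'$. Writing $K=Z(\J)\cap\bS_d$ and $K'=Z(\J')\cap\bS_d$, the theorem gives
\[
\J = \I(K) \cap \widetilde{\J} \qand \J' = \I(K')\cap \widetilde{\J'} = \I(K')\cap \widetilde{\J}.
\]
Now the hypothesis (iii) reads exactly $K=K'$, so $\I(K)=\I(K')$ and therefore $\J=\J'$, which is precisely (i).

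I do not expect a significant obstacle: once one notices that $\J$ and $\widetilde{\J}\cap \A_d$ share the same weak-$*$ closure in $\M_d$, the corollary reduces to a direct invocation of Theorem~\ref{T: ideals of A_d}. The only point that requires a small justification is that $\widetilde{\J}$ is an ideal of $\M_d$ (so that $\J'$ is an ideal of $\A_d$), which is standard from separate weak-$*$ continuity of multiplication on $\M_d$.
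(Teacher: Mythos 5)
Your proof is correct and follows essentially the same route as the paper: both reduce (iii) $\Rightarrow$ (i) to an application of Theorem~\ref{T: ideals of A_d} to $\J' = \widetilde{\J}\cap \A_d$, using the sandwich $\J \subset \J' \subset \widetilde{\J}$ to identify $\widetilde{\J'}$ with $\widetilde{\J}$. Your explicit remark that $\widetilde{\J}$ is an ideal of $\M_d$ (so that $\J'$ is a closed ideal of $\A_d$, needed to invoke the theorem) is a point the paper leaves unstated, so if anything you are slightly more careful.
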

}  

\begin{proof}
The only non-trivial implication is that (iii) implies (i). Assume that 
\[
Z(\widetilde{\J}\cap \A_d)\cap \bS_d=Z(\J)\cap \bS_d
\]
and denote this set by $X$. Note  that
\[
\J\subset \widetilde{\J}\cap \A_d\subset \widetilde{\J};
\] 
whence $\widetilde{\J}$ is the weak-$*$ closure of $\widetilde{\J}\cap \A_d$. 
By Theorem \ref{T: ideals of A_d}, we may write
\[
\J=\widetilde{\J}\cap \I(X)=\widetilde{(\widetilde{\J}\cap \A_d)} \cap \I(X) =  \widetilde{\J}\cap \A_d,
\]
and the proof is complete.
\end{proof}

Consequently we see that an equality between ideals can be detected by an equality between zero sets. 
Clearly, we have that $\J\subset \widetilde{\J}\cap \A_d$ and hence $Z(\widetilde{\J}\cap \A_d)\subset Z(\J)$.
Remarkably, the set $Z(\J)\setminus Z(\widetilde{\J}\cap \A_d)$ is always rather small and is contained in $\bS_d$.

\begin{theorem}\label{T:XminusY}
Let $\J\subset \A_d$ be a closed ideal and let $\widetilde{\J}\subset \M_d$ be its weak-$*$ closure. Then, $Z(\J)\setminus Z(\widetilde{\J}\cap \A_d)$ is an $\A_d$-totally null subset of $\bS_d$.
\end{theorem}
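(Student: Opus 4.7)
First, I would verify that $Z(\J) \setminus Z(\widetilde{\J}\cap \A_d) \subset \bS_d$. For $\lambda \in \bB_d \cap Z(\J)$, the evaluation functional $\tau_\lambda$ lies in $\M_{d*}$, hence is weak-$*$ continuous on $\M_d$; therefore its vanishing on $\J$ extends to $\widetilde\J$, and in particular to $\widetilde\J \cap \A_d$. Set $K = Z(\J) \cap \bS_d$, $K' = Z(\widetilde\J \cap \A_d) \cap \bS_d$, $\J' = \widetilde\J \cap \A_d$, and $E = K \setminus K'$. The remaining task is to show that $E$ is $\A_d$-totally null, equivalently that every $\A_d$-Henkin measure $\mu$ concentrated on $E$ is zero. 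We may assume $\J \neq \{0\}$, for otherwise $K = \bS_d = K'$ and $E = \emptyset$.

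For the core argument, I would first show that $\mu$ annihilates $\widetilde\J$. Concentration of $\mu$ on $E \subset K \subset Z(\J)$ gives $\Phi_\mu \in \J^\perp$, and Theorem~\ref{T:dualJ} supplies the decomposition $\J^\perp = \widetilde\J_\perp \oplus_1 \TS_{\A_d}(K)$; the Henkinness of $\mu$ places $\Phi_\mu$ in $\M_{d*}$, which is disjoint from $\TS_{\A_d}(\bS_d)$, so $\Phi_\mu \in \widetilde\J_\perp$. I would then upgrade this to $\psi \mu = 0$ as a measure for every $\psi \in \J'$. Since $\widetilde\J$ is an ideal of $\M_d$, the product $\phi\psi$ remains in $\J'$ for every $\phi \in \A_d$, so $\int \phi\psi\,d\mu = 0$; that is, $\psi\mu$ annihilates $\A_d$. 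The measure $\psi\mu$ is itself $\A_d$-Henkin (being absolutely continuous with respect to the Henkin measure $\mu$, by Theorem~\ref{T:band}) and is concentrated on $E \subset K$, a proper closed subset of $\bS_d$. The boundary-uniqueness input underlying Corollary~\ref{C:I(K) dense}---an $\A_d$-Henkin measure that annihilates $\A_d$ and is concentrated on a proper closed subset of $\bS_d$ must vanish as a measure---then forces $\psi\mu = 0$.

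Finally, I would conclude by a covering argument. For each $\zeta \in E$, pick $\psi_\zeta \in \J'$ with $\psi_\zeta(\zeta) \neq 0$ (available because $\zeta \notin K'$), and take a relative neighborhood $V_\zeta$ of $\zeta$ in $\bS_d$ on which $|\psi_\zeta|$ is bounded below. From $\psi_\zeta\mu = 0$ as a measure, $\psi_\zeta$ vanishes $|\mu|$-almost everywhere, so $|\mu|(V_\zeta) = 0$. Second countability of $\bS_d$ produces a countable subcover of $E$, whence $|\mu|(E) = 0$, and since $\mu$ is concentrated on $E$, we conclude $\mu = 0$.

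The hard part will be the $\psi\mu = 0$ step: extracting the vanishing of the measure from the trio ``Henkin, annihilates $\A_d$, concentrated on a proper closed subset of $\bS_d$''. This is the same boundary-uniqueness ingredient (in the spirit of F.\&M.\ Riesz and Luzin--Privalov for the ball) that is implicit in the proof of Corollary~\ref{C:I(K) dense}, and it is the only non-formal input to the argument; the remainder is a direct application of the duality framework of Section~\ref{S:dualityideals}.
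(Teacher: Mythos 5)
Your argument parallels the paper's in structure, and most of the steps are correct: the verification that the exceptional set lies in $\bS_d$, the reduction to a Henkin measure $\mu$ concentrated on $E$, the use of Theorem~\ref{T:dualJ} to place $\Phi_\mu$ in $\widetilde\J_\perp$, the observation that $\psi\mu$ annihilates $\A_d$ for $\psi\in\J':=\widetilde\J\cap\A_d$, and the covering argument at the end---all fine once one knows $\psi\mu=0$.

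The gap is that the ``boundary-uniqueness'' claim you invoke---an $\A_d$-Henkin measure annihilating $\A_d$ and concentrated on a proper closed subset of $\bS_d$ must vanish---is false. Take $d=2$, $L=\{(e^{i\theta},0):\theta\in[0,2\pi)\}\subset\bS_2$, and $\nu = e^{i\theta}\,d\theta/2\pi$ on $L$. This is absolutely continuous with respect to $d\theta/2\pi$, a representing measure for the origin, hence $\A_2$-Henkin by Theorem~\ref{T:band}; it annihilates $\A_2$ because $\phi(\,\cdot\,,0)$ lies in the disc algebra and $\int f(e^{i\theta})e^{i\theta}\,d\theta=0$ for any such $f$; and $L$ is a proper closed subset of $\bS_2$. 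Yet $\nu\neq 0$. Nor is your claim implicit in Corollary~\ref{C:I(K) dense}: the only ``uniqueness'' input there is that for a \emph{positive} measure $\nu$, $\Phi_\nu=0$ forces $\nu=0$ (evaluate at the constant $1$). Your $\psi\mu$ is not positive, so that device is unavailable for it.

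The fix, which is precisely the paper's trick, is to use the conjugate. Take $\mu$ positive from the start (e.g.\ $\mu = |\eta|\big|_E$ for an arbitrary Henkin measure $\eta$), and consider $\ol{\psi}\mu$ instead of $\psi\mu$. This measure is again $\A_d$-Henkin (Theorem~\ref{T:band}) and concentrated on $E\subset Z(\J)$, so $\Phi_{\ol{\psi}\mu}\in \J^\perp\cap\M_{d*}=\widetilde\J_\perp$ by Theorem~\ref{T:dualJ}. Since $\psi\in\widetilde\J$,
\[
\int_E |\psi|^2\,d\mu \;=\; \Phi_{\ol{\psi}\mu}(\psi) \;=\; 0,
\]
and positivity of $\mu$ now gives $\psi=0$ $\mu$-a.e., exactly what your covering argument needs. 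With this replacement your proof becomes essentially the paper's.
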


\begin{proof}
Let $X=Z(\J)$ and $Y=Z(\widetilde{\J}\cap \A_d)$. 
Put $K=X\setminus Y$. 
If $\phi$ is in $\widetilde{\J}$,  then there exists a sequence $\{\psi_n\}_n\subset \J$ which converges to $\phi$ in the weak-$*$ topology of $\M_d$.  In particular, since $\tau_\lambda$ is $\A_d$-Henkin for $\lambda\in\bB_d$, it follows that $\phi(\lambda)=0$ for every $\lambda\in Z(\J)\cap \bB_d$;
whence 
\[
Z(\J)\cap \bB_d=Z(\widetilde{\J}\cap \A_d)\cap \bB_d = Z(\widetilde{\J}) .
\]
We conclude that $K\subset \bS_d$.

Let $\eta$ be an $\A_d$-Henkin measure. We claim that $|\eta|(K)=0$. 
Note that the restriction of the measure $|\eta|$ to $K$ is also $\A_d$-Henkin 
because it is absolutely continuous with respect to $\eta$ (Theorem \ref{T:band}). 
Call this measure $\mu$. Let $\zeta_0\in K$ and 
choose a function $\phi\in \widetilde{\J}\cap \A_d$ such that $\phi(\zeta_0)\neq 0$. 
There exists a sequence $\{\psi_n\}_n\subset \J$ converging to $\phi$ in the weak-$*$ topology of $\M_d$. 
Observe that $\ol{\phi}\mu$ is $\A_d$-Henkin by Theorem \ref{T:band} so we find
\[
\int_{K} |\phi|^2 \,d\mu = \lim_{n\to \infty} \int_{K}\psi_n \ol{\phi} \,d\mu = 0
\]
since each $\psi_n$ vanishes on $X$. In particular, 
\[ \mu(\{\zeta\in K: \phi(\zeta)\neq 0\})=0 . \]
Since $\phi$ is continuous and $\phi(\zeta_0)\neq 0$, this shows that $\zeta_0$ does not lie in the support of $\mu$. 
As $\zeta_0\in K$ was arbitrary and $\eta$ is regular, we conclude that $|\eta|(K)=0$. Since $\eta$ was an arbitrary $\A_d$-Henkin measure,  $K$ is $\A_d$-totally null.
\end{proof}

\section{Zero sets for $\A_d$}\label{S:zerosets}

In this section we will be interested in \emph{zero sets} for $\A_d$, that is closed sets $X \subset \ol{\bB_d}$ such that there is a set $\S \subset \A_d$ for which $X=Z(\S)$. We focus in particular on the size of zero sets, and on the intersection of zero sets with the sphere.
In the case of the ball algebra, it is a classical fact that a closed subset of the sphere 
is the zero set of a single function in $\AB$ if and only if it is $\AB$--totally null (see \cite[Chapter 10]{Rudin} for the appropriate definition and statement).
In $\A_d$, we can establish one of these implications.

\begin{proposition}\label{P:Z(f)}
Let $K\subset \bS_d$ be a closed $\A_d$-totally null subset.
Then there is a function $\phi\in\A_d$ such that $Z(\phi)=K$.
\end{proposition}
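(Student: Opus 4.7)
The plan is to reduce the statement directly to the peak interpolation theorem (Theorem \ref{T:peakinterpolation}) for $\A_d$, which is available precisely because $K$ is assumed $\A_d$-totally null. The case $K=\varnothing$ is trivial (take $\phi\equiv 1$), so assume $K$ is non-empty. I apply Theorem \ref{T:peakinterpolation} to the constant function $f\equiv 1$ on $K$ (with, say, $\ep=1$), producing some $\psi\in\A_d$ with $\psi|_K=1$ and $|\psi(\zeta)|<1$ for every $\zeta\in\ol{\bB_d}\setminus K$. Then the candidate is
\[
 \phi := 1-\psi \ \in\ \A_d.
\]

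Checking that $Z(\phi)=K$ is immediate from the two listed properties of $\psi$: on $K$ we have $\psi=1$ so $\phi=0$, giving $K\subset Z(\phi)$; conversely, for $z\in\ol{\bB_d}\setminus K$ we have $|\psi(z)|<1$, so in particular $\psi(z)\ne 1$ and hence $\phi(z)\ne 0$, giving $Z(\phi)\subset K$.

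There is essentially no obstacle once Theorem \ref{T:peakinterpolation} is in hand; the entire content of the statement is that peak interpolation with the value $1$ on $K$ can be carried out inside $\A_d$ (and not just inside $\AB$). That non-trivial fact is exactly the hard theorem from \cite{CD} quoted as Theorem \ref{T:peakinterpolation}, so the present proposition is really a one-line corollary. Note that control on the multiplier norm of $\psi$ is not needed for this argument, only properties (i) and (ii).
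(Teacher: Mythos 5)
Your proof is correct and is essentially identical to the paper's: both apply Theorem \ref{T:peakinterpolation} to the constant function $1$ on $K$ to get a peak function $\psi$, and then take $\phi=1-\psi$. The only (cosmetic) difference is that you explicitly invoke property (ii) on all of $\ol{\bB_d}\setminus K$, whereas the paper states the inequality on $\bS_d\setminus K$ but implicitly uses the stronger version; your phrasing is the cleaner one.
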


\begin{proof}
By Theorem \ref{T:peakinterpolation}, there exists $\psi\in \A_d$ such that 
\[
 \psi|_K=1  \qand  |\psi(\zeta)|<1 \ \FOR \zeta\in \bS_d\setminus K . 
\]
Hence, we see that for $z\in \ol{\bB_d}$, we have $\psi(z)=1$  if and only if $z\in K$.
If we let $\phi = 1-\psi$, then $Z(\phi)=K$. 
\end{proof}

We do not know whether the converse holds. 

\begin{question}\label{Q:single}
Let $\phi\in \A_d$ such that $Z(\phi)$ is contained in $\bS_d$. Must $Z(\phi)$ be $\A_d$-totally null?
\end{question}
 
More generally, an old unresolved question of Rudin (\cite[page 415]{Rudin}) asks whether the zero set of an ideal of $\AB$ which is contained in the sphere is necessarily $\AB$--totally null (equivalently, the zero set of a \emph{single} function in $\AB$). One can formulate the corresponding problem in $\A_d$.

\begin{question}\label{Q:Rudin}
Let $X$ be a zero set for $\A_d$ that is contained in $\bS_d$. Must $X$ be $\A_d$-totally null?
\end{question}

Since Rudin's problem has been around for a long time, we expect this question to be very difficult to answer. 
Nevertheless, Corollaries \ref{C:TNw*}, \ref{C:I(K) dense} and Theorem \ref{T:TSw*closed} provide a different approach to the problem. 
Moreover, note that if $X\subset \bS_d$ is a zero set, then putting $\J=\I(X)$, we have $X=Z(\J)$ by (\ref{E:ZI}). Therefore
\begin{equation}\label{E:union}
X = \big( Z(\widetilde{\J}\cap \A_d)\cap \bS_d \big) \ \cup\ \big( Z(\J)\setminus Z(\widetilde{\J}\cap \A_d) \big)
\end{equation}
since 
\[
Z(\widetilde{\J}\cap \A_d)\cap \bB_d=Z(\J)\cap \bB_d
\]
as was shown in the proof of Theorem \ref{T:XminusY}. Note that the union in (\ref{E:union}) is disjoint, and that the second member in the right-hand side of the equality is $\A_d$-totally null by Theorem \ref{T:XminusY}. 
In particular, $X$ is $\A_d$-totally null if and only if $Z(\widetilde{\J}\cap \A_d)\cap \bS_d$ is. This observation may help in answering Question \ref{Q:Rudin}, as the ideal $\widetilde{\J}\cap \A_d$ may be easier to handle given our understanding of the weak-$*$ closed ideals of $\M_d$ 
(see \cite{DRS2,GRS,McCT00}).

Next, we show that certain (possibly non-closed) $\A_d$--totally null subsets of $\bS_d$ can be added to zero sets to obtain new zero sets.

\begin{theorem} \label{T:zero + TN}
Let $\J\subset \A_d$ be a closed ideal. Let $E\subset \bS_d$ be an $\A_d$-totally null Borel set such that $\ol{E} \subset Z(\J)\cup E$. If we put $\J'= \J\cap \I(E)$,
then 
\[ Z(\J') = Z(\J) \cup E . \]
\end{theorem}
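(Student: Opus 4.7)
The inclusion $Z(\J)\cup E\subset Z(\J')$ is immediate from $\J'\subset \J$ and $\J'\subset \I(E)$, so the task is to prove the reverse containment. My plan is to identify $(\J')^\perp$ via duality and then exploit the band decomposition of $\A_d^*$ at a would-be additional zero. First I would reduce to the case $E\cap Z(\J)=\emptyset$: any function in $\J$ already vanishes on $Z(\J)$, so $\J\cap \I(E)=\J\cap \I(E\setminus Z(\J))$, and $E':=E\setminus Z(\J)$ inherits both the total nullity and the closure condition, since $\ol{E'}\subset \ol{E}\subset Z(\J)\cup E=Z(\J)\cup E'$. The hypotheses of Theorem \ref{T:TSw*closed} are thus in force and $\J^\perp+\TS_{\A_d}(E)$ is weak-$*$ closed in $\A_d^*$.

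The key step is the identification $(\J')^\perp=\J^\perp+\TS_{\A_d}(E)$. The inclusion $\supset$ is transparent, since $\J^\perp$ annihilates $\J\supset \J'$ and, by Proposition \ref{P:TSpreann}, measures in $\TS_{\A_d}(E)$ annihilate $\I(E)\supset \J'$. For the reverse inclusion, I would compute the preannihilator of $\J^\perp+\TS_{\A_d}(E)$ using Proposition \ref{P:TSpreann}:
\[
(\J^\perp+\TS_{\A_d}(E))_\perp=(\J^\perp)_\perp\cap \TS_{\A_d}(E)_\perp=\J\cap \I(E)=\J'.
\]
Taking annihilators of both sides and using the weak-$*$ closedness of $\J^\perp+\TS_{\A_d}(E)$ (so that this subspace is its own double annihilator) then yields $(\J')^\perp=\J^\perp+\TS_{\A_d}(E)$.

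With this in hand, suppose for contradiction that $\lambda\in Z(\J')\setminus (Z(\J)\cup E)$. Then $\tau_\lambda\in (\J')^\perp$, so I can write $\tau_\lambda=\Theta+\Phi_\mu$ with $\Theta\in \J^\perp$ and $\mu\in \TS_{\A_d}(E)$. Applying Theorem \ref{T:dualJ} to $\Theta$ yields $\Theta=\Theta_a+\Phi_{\nu_s}$ with $\Theta_a\in \widetilde{\J}_\perp\subset \M_{d*}$ and $\nu_s\in \TS_{\A_d}(Z(\J)\cap \bS_d)$, so
\[
\tau_\lambda=\Theta_a+\Phi_{\nu_s+\mu}
\]
is a decomposition of $\tau_\lambda$ into its $\M_{d*}$ part and a totally singular part. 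If $\lambda\in \bB_d$, then $\tau_\lambda$ lies in $\M_{d*}$ with trivial singular part, so the uniqueness in Theorem \ref{T:dual} forces $\Theta_a=\tau_\lambda$ and hence $\tau_\lambda\in \widetilde{\J}_\perp$; but then every function in $\J\subset\widetilde{\J}$ vanishes at $\lambda$, contradicting $\lambda\notin Z(\J)$. If instead $\lambda\in \bS_d$, then $\tau_\lambda=\Phi_{\delta_\lambda}$ with $\delta_\lambda\in \TS_{\A_d}(\bS_d)$, and uniqueness forces $\nu_s+\mu=\delta_\lambda$. But $\nu_s+\mu$ is concentrated on $(Z(\J)\cap \bS_d)\cup E$, which excludes $\lambda$, whence $(\nu_s+\mu)(\{\lambda\})=0$, contradicting $\delta_\lambda(\{\lambda\})=1$.

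The main obstacle is the duality identification of $(\J')^\perp$, which rests entirely on the non-trivial weak-$*$ closedness supplied by Theorem \ref{T:TSw*closed}; once this is in place, the band decomposition of $\A_d^*$ handles the remainder mechanically through the two cases $\lambda\in \bB_d$ and $\lambda\in \bS_d$.
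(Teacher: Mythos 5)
Your proof is correct and follows essentially the same route as the paper: reduce to $E\cap Z(\J)=\emptyset$, use Theorem \ref{T:TSw*closed} to conclude $\J^\perp+\TS_{\A_d}(E)$ is weak-$*$ closed and hence equals $(\J')^\perp$, then invoke the band decomposition of $\A_d^*$ (Theorems \ref{T:dual} and \ref{T:dualJ}) to force any $\tau_\lambda\in(\J')^\perp$ to land in $\widetilde{\J}_\perp$ (if $\lambda\in\bB_d$) or in $\TS_{\A_d}(X\cap\bS_d)+\TS_{\A_d}(E)$ (if $\lambda\in\bS_d$). The only cosmetic difference is that the paper states the decomposition $\J'^\perp = \widetilde\J_\perp \oplus_1 (\TS_{\A_d}(X\cap\bS_d)+\TS_{\A_d}(E))$ once and reads off both cases directly, while you decompose each functional on the fly and phrase the conclusion as a contradiction.
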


\begin{proof}
Put $X=Z(\J)$. We may suppose that $E$ is disjoint from $X$ by replacing it with $E \setminus X$ if necessary. It is clear that $X\cup E\subset Z(\J')$, so we need only show the reverse inclusion.

Observe that
\[
( \J^\perp + \TS_{\A_d}(E) )_\perp = (\J^\perp)_\perp \cap \TS_{\A_d}(E)_\perp = \J \cap \I(E) = \J' 
\]
by Proposition \ref{P:TSpreann}.
By Theorem \ref{T:TSw*closed}, we see that
$\J^\perp + \TS_{\A_d}(E)$ is closed in the weak-$*$ topology of $\A_d^*$, whence
\[ \J'^\perp  =  \J^\perp + \TS_{\A_d}(E)= \widetilde\J_\perp \oplus_1 \left(\TS_{\A_d}(X \cap \bS_d)+\TS_{\A_d}(E)\right)\]
by Theorem \ref{T:dualJ}.
Let now $\lambda\in Z(\J')$. If $\lambda\in \bB_d$, then the functional $\tau_{\lambda}$ of evaluation at $\lambda$ lies in 
$\J'^\perp\cap \M_{d*}=\widetilde{\J}_\perp\subset \J^\perp.$ In particular, we must have $\lambda\in Z(\J)=X$. If $\lambda\in \bS_d$, then $\tau_\lambda$ is given by integration against $\delta_\lambda$, so that 
\[
\delta_{\lambda}\in \TS_{\A_d}(\bS_d)\cap \J'^\perp=\TS_{\A_d}(X \cap \bS_d)+\TS_{\A_d}(E) ;
\]
whence $\lambda\in (X\cap \bS_d)\cup E$. We conclude that
\[
Z(\J')\subset X\cup E .\qedhere
\]
\end{proof}

An immediate consequence of this result is the following.

\begin{cor}\label{C:zerosetunionTN}
If $X\subset \ol{\bB_d}$ is a zero set for $\A_d$ and $E$ is an $\A_d$-totally null Borel set such that $\ol{E}\subset X\cup E$, then $X\cup E$ is  a zero set for $\A_d$.
\end{cor}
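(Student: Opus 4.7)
The plan is that this corollary follows almost immediately from Theorem \ref{T:zero + TN}; the only task is to realize the given zero set $X$ as the zero set of a concrete closed ideal of $\A_d$ so that the theorem becomes applicable.

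First, since $X$ is a zero set for $\A_d$, by definition there exists some $\S\subset \A_d$ with $X=Z(\S)$. I would then set $\J=\I(X)$, a closed ideal of $\A_d$ by construction. Invoking the identity $Z(\I(Z(\S)))=Z(\S)$ from \eqref{E:ZI}, I obtain $Z(\J)=Z(\I(Z(\S)))=Z(\S)=X$. Thus the hypothesis $\ol{E}\subset X\cup E$ rewrites as $\ol{E}\subset Z(\J)\cup E$, which is exactly the condition needed to apply Theorem \ref{T:zero + TN}. Recall that $\A_d$-totally null sets are by definition Borel subsets of $\bS_d$, so the remaining hypotheses of the theorem on $E$ are in place.

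Applying Theorem \ref{T:zero + TN} with this $\J$ and $E$, the closed ideal $\J'=\J\cap \I(E)=\I(X)\cap \I(E)\subset\A_d$ satisfies $Z(\J')=Z(\J)\cup E=X\cup E$. This exhibits $X\cup E$ as the common zero set in $\ol{\bB_d}$ of a collection of functions in $\A_d$, which is precisely the definition of being a zero set for $\A_d$. There is no genuine obstacle here, as all of the hard work (the weak-$*$ closedness of $\J^\perp+\TS_{\A_d}(E)$ and the identification of $Z(\J')$) has already been carried out in the proof of Theorem \ref{T:zero + TN}.
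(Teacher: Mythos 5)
Your proof is correct and matches the paper's intent precisely: the paper states the corollary as ``an immediate consequence'' of Theorem~\ref{T:zero + TN}, and your reduction via $\J=\I(X)$ together with the identity $Z(\I(Z(\S)))=Z(\S)$ from \eqref{E:ZI} is exactly the step being left to the reader.
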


In the special case where the zero set $X$ lies in the sphere, the conclusion of that corollary is consistent with a positive answer for Question \ref{Q:Rudin}, and thus can be seen as supporting evidence.

Upon examining Question \ref{Q:Rudin}, one may wonder to what extent a zero set for $\A_d$ is determined by its intersection with the sphere. In order to address this issue, we need a standard lemma.

\begin{lemma}\label{L:characters}
Let $X\subset \ol{\bB_d}$ be a closed subset. Then, the character space of $\A_d/\I(X)$ can be canonically identified with $Z(\I(X))$. Moreover, under this identification the Gelfand transform 
\[
\Gamma: \A_d/\I(X)\to \rC(Z(\I(X)))
\]
is given by the restriction map 
\[
\Gamma(\phi+\I(X))=\phi|_{Z(\I(X))} \qfor \phi\in \A_d .
\]
\end{lemma}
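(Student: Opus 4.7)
The plan is to realize the character space of $\A_d/\I(X)$ as the set of those characters of $\A_d$ that annihilate $\I(X)$, and then to show that every character of $\A_d$ is a point evaluation $\tau_\lambda$ at some $\lambda\in\ol{\bB_d}$; the characters annihilating $\I(X)$ will then pick out precisely $Y := Z(\I(X))$.

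For the easy direction, for every $\lambda\in\ol{\bB_d}$ the evaluation $\tau_\lambda$ is multiplicative on the polynomials and satisfies $|\tau_\lambda(\phi)|\le\|\phi\|_\infty\le\|\phi\|_{\M_d}$ for every $\phi\in\A_d$, since $\A_d\subset\AB$. Hence $\tau_\lambda$ extends by continuity to a character of $\A_d$, and if $\lambda\in Y$ then $\tau_\lambda$ vanishes on $\I(X)$ and descends to a character of $\A_d/\I(X)$.

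For the converse, I would show that an arbitrary character $\chi$ of $\A_d$ has the form $\tau_\lambda$ for some $\lambda\in\ol{\bB_d}$. Setting $\lambda_i := \chi(z_i)$, the key input is the row-contraction property of the coordinate multipliers: $\sum_i M_{z_i} M_{z_i}^*\le I$ on $H^2_d$, which yields $\|\sum_i c_i z_i\|_{\M_d}\le\|c\|_{\ell^2}$ for every $c\in\bC^d$. Since any character of a unital Banach algebra is contractive, we get $|\sum_i c_i\lambda_i|=|\chi(\sum_i c_i z_i)|\le\|c\|_{\ell^2}$ for all $c$, whence $\|\lambda\|\le 1$, i.e., $\lambda\in\ol{\bB_d}$. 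Then $\chi$ and $\tau_\lambda$ agree on the coordinate functions, hence on all polynomials by multiplicativity and linearity, and finally on $\A_d$ by norm density of the polynomials. Applying this to a character of $\A_d/\I(X)$ lifted to $\A_d$, we see it corresponds to some $\lambda\in\ol{\bB_d}$ with $\phi(\lambda)=0$ for every $\phi\in\I(X)$, that is, to a point of $Y$.

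Finally, the resulting bijection $Y\to$ character space of $\A_d/\I(X)$, $\lambda\mapsto\tau_\lambda$ (mod $\I(X)$), is continuous: if $\lambda_\alpha\to\lambda$ in $Y$ then $\tau_{\lambda_\alpha}\to\tau_\lambda$ pointwise on polynomials, and by the uniform bound $\|\tau_{\lambda_\alpha}\|\le 1$ together with density of polynomials in $\A_d$, pointwise on all of $\A_d$; this is exactly convergence in the weak-$*$ (Gelfand) topology. Since $Y$ is compact Hausdorff and the character space is Hausdorff, a continuous bijection between them is automatically a homeomorphism. The Gelfand transform then acts by $\Gamma(\phi+\I(X))(\lambda)=\tau_\lambda(\phi)=\phi(\lambda)$, which is precisely the restriction map $\phi\mapsto\phi|_Y$. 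The only nontrivial step in the whole argument is the identification of the character space of $\A_d$ with $\ol{\bB_d}$, and the only subtle ingredient there is the appeal to the row-contraction property of $(M_{z_1},\ldots,M_{z_d})$.
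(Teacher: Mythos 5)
Your proof is correct and takes essentially the same route as the paper: pull a character of $\A_d/\I(X)$ back to a character of $\A_d$, use that the character space of $\A_d$ is $\ol{\bB_d}$, and observe that those characters killing $\I(X)$ are exactly evaluations at points of $Z(\I(X))$, from which the description of the Gelfand transform is immediate. The one difference is that the paper simply asserts the identification of the character space of $\A_d$ with $\ol{\bB_d}$ as a known fact, whereas you supply a clean self-contained argument for it via the row-contraction inequality $\bigl\|\sum_i c_i M_{z_i}\bigr\|\le\|c\|_{\ell^2}$; that argument is sound and your continuity/compactness step establishing the homeomorphism is also handled properly, so nothing is missing.
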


\begin{proof}
Let
\[
q:\A_d\to \A_d/\I(X)
\]
be the quotient map. 
As this map is a surjective homomorphism, the adjoint map $q^*$ is an injective map from $(\A_d/\I(X))^*$ to $\A_d^*$ which takes characters to characters.
Let $\chi$ be a character of $\A_d/\I(X)$. Then $q^*(\chi) = \chi\circ q$ is a character of $\A_d$.
Hence there is a $\lambda\in\ol{\bB_d}$ so that $\chi \circ q = \tau_\lambda$, where $\tau_{\lambda}$ is evaluation at $\lambda$.
Observe that 
\[
\phi(\lambda)=(\chi\circ q)(\phi)=0 \qforal \phi\in \I(X) ;
\]
whence $\lambda\in Z(\I(X))$.

Conversely, let $\lambda\in Z(\I(X))$. 
Then, $\I(X)\subset \ker \tau_{\lambda}$; so  there exists a character $\chi_{\lambda}$ of $\A_d/\I(X)$ with $\chi_\lambda\circ q=\tau_\lambda$. 
Hence the character space of $\A_d/\I(X)$ is canonically identified with $Z(\I(X))$. The claim about the Gelfand transform follows immediately.
\end{proof}

Recall that given a commutative Banach algebra $\A$, its \emph{Shilov boundary} is the smallest closed subset $\Sigma$ of the character space $\Delta_{\A}$ with the property that
\[
\max_{\chi\in \Delta_\A}|\chi(a)|=\max_{\chi\in \Sigma}|\chi(a)|
\]
for every $a\in \A$. The next lemma establishes a simple topological property of the Shilov boundary for some quotients of $\A_d$.

\begin{lemma}\label{L:shilov}
Let $X\subset\ol{\bB_d}$ and let $\Sigma_X$ denote the Shilov boundary of the algebra $\A_d/\I(X)$. 
Then, $\Sigma_X$ contains $Z(\I(X))\cap \bS_d$ 
and $\Sigma_X\cap \bB_d$ consists of all isolated points of $Z(\I(X))\cap \bB_d$.
Moreover, if $z$ is an isolated point of $Z(\I(X)) \cap \bB_d$, then there is a $\phi \in \A_d$ such that 
\[
 \phi(z)=1 \qand \phi|_{Z(I(X))\setminus\{z\}} = 0 .
\]
\end{lemma}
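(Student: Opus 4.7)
The plan is to use Lemma \ref{L:characters} throughout in order to identify the character space of $\A_d/\I(X)$ with $Z(\I(X))$ (with Gelfand transform given by restriction), so that the Shilov boundary $\Sigma_X$ is a closed subset of $Z(\I(X)) \subset \ol{\bB_d}$. The three assertions of the lemma will be proved by handling boundary points, isolated interior points, and non-isolated interior points in turn.

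To establish $Z(\I(X)) \cap \bS_d \subset \Sigma_X$, I would show that every such point is a peak point of $\A_d/\I(X)$. Given $\zeta \in Z(\I(X)) \cap \bS_d$, the singleton $\{\zeta\}$ is $\A_d$-totally null (as noted immediately before Theorem \ref{T:peakinterpolation}), so applying Theorem \ref{T:peakinterpolation} to the constant function $1$ on $K = \{\zeta\}$ yields $\phi \in \A_d$ with $\phi(\zeta) = 1$ and $|\phi(w)| < 1$ for every $w \in \ol{\bB_d} \setminus \{\zeta\}$. The Gelfand transform of $\phi + \I(X)$ attains its maximum over $Z(\I(X))$ only at $\zeta$, so $\zeta$ is a peak point, which forces $\zeta \in \Sigma_X$.

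I would treat the isolated points of $Z(\I(X)) \cap \bB_d$ simultaneously with the construction of the function $\phi$ appearing in the ``Moreover'' clause. If $z$ is such an isolated point, there is an open neighborhood $U \subset \bB_d$ of $z$ with $U \cap Z(\I(X)) = \{z\}$, and since $z$ is separated from $\bS_d$ the singleton $\{z\}$ is clopen in $Z(\I(X))$. Applying the Shilov idempotent theorem to the commutative unital Banach algebra $\A_d/\I(X)$ then produces an idempotent whose Gelfand transform equals $1$ at $z$ and $0$ elsewhere on $Z(\I(X))$. Any lift $\phi \in \A_d$ of this idempotent satisfies $\phi(z) = 1$ and $\phi|_{Z(\I(X)) \setminus \{z\}} = 0$, which is precisely the asserted function, and the same $\phi$ exhibits $z$ as a peak point of $\A_d/\I(X)$, forcing $z \in \Sigma_X$.

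Finally, to rule out non-isolated points, I would exploit the complex analytic structure of $V := Z(\I(X)) \cap \bB_d$. As the common zero set of the holomorphic functions in $\I(X) \subset H^\infty(\bB_d)$, the set $V$ is a closed analytic subvariety of $\bB_d$, and local finiteness of its decomposition into irreducible components makes the set of isolated points of $V$ a discrete (hence closed in $\bB_d$) subset. A non-isolated $z \in V$ lies on some positive-dimensional irreducible component $V_z$, which cannot be relatively compact in the Stein manifold $\bB_d$, and therefore $\ol{V_z} \cap \bS_d \neq \emptyset$. The maximum modulus principle for continuous functions on $\ol{V_z}$ that are holomorphic on $V_z$ gives
\[
|\phi(z)| \leq \max_{\ol{V_z} \cap \bS_d} |\phi| \leq \max_{Z(\I(X)) \cap \bS_d} |\phi|
\]
for every $\phi \in \A_d$. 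Consequently the closed set $(Z(\I(X)) \cap \bS_d) \cup \{\text{isolated points of } V\}$ is a boundary for the Gelfand image of $\A_d/\I(X)$, so it must contain $\Sigma_X$ and hence excludes $z$. The principal obstacle is this last step, namely correctly deploying the maximum modulus principle for analytic subvarieties together with enough local dimension theory to know that isolated points are discrete in $\bB_d$ and that positive-dimensional components necessarily reach the sphere.
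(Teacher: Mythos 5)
Your proof is correct and follows essentially the same strategy as the paper: a peak-point argument for boundary points, the Shilov idempotent theorem for isolated interior points, and the maximum modulus principle on a positive-dimensional analytic subvariety for non-isolated interior points. The one genuine difference is in the first step. You invoke the deep peak interpolation theorem (Theorem~\ref{T:peakinterpolation}) for the singleton $\{\zeta\}$, whereas the paper uses the elementary and explicit function $\theta_\zeta(z)=\tfrac12\bigl(1+\langle z,\zeta\rangle\bigr)$, which already peaks exactly at $\zeta$ on $\ol{\bB_d}$ and lies in $\A_d$ (it is a polynomial). Your route is valid but considerably heavier; the paper's choice keeps this step self-contained and independent of the interpolation machinery. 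For non-isolated points your packaging is slightly different: you exhibit the closed boundary set $(Z(\I(X))\cap\bS_d)\cup\{\text{isolated points}\}$ and appeal to minimality of the Shilov boundary, whereas the paper argues more locally that $\Sigma_X$ must avoid the entire irreducible variety $V\cap\bB_d$ through $z$. Both hinge on the same maximum modulus estimate and local finiteness facts; your version makes the required closedness explicit, which is a small bonus in rigor.
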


\begin{proof}
By virtue of Lemma \ref{L:characters} we see that the character space of $\A_d/\I(X)$ is $Z(\I(X))$ and that the Gelfand transform is the restriction of a function in $\A_d$ to that set.
For $\zeta\in Z(\I(X))\cap \bS_d$, let 
\[
\theta_{\zeta}(z)=\frac{1+\langle z,\zeta \rangle_{\bC^d}}{2}, \quad z\in \bB_d.
\]
Then, $\theta_\zeta\in \A_d$ and
\[
|\theta_\zeta(\lambda)|<|\theta_\zeta(\zeta)|=1
\]
whenever $\lambda\in \ol{\bB_d}\setminus\{\zeta\}$; whence $\zeta\in \Sigma_X$. Thus, $Z(\I(X))\cap \bS_d\subset \Sigma_X$.

Next, let $z$ be a non-isolated point of $Z(\I(X))\cap \bB_d$. Since $Z(\I(X))\cap \bB_d$ is a complex analytic variety, there is an irreducible subvariety $V$ containing $z$ which has dimension at least 1 (see \cite[Theorem 3.2B]{Whitney}). 
By the maximum modulus principle (see \cite[Theorem~III.B.16]{GR}), we see that $\ol{V}\cap \bS_d$ is non-empty and
\[
|\phi(z)| \leq \max_{\lambda\in \ol{V} \cap \bS_d} |\phi(\lambda)| 
\]
for every $\phi\in \A_d$. In particular, $\Sigma_X\subset \ol{\bB_d}\setminus (V\cap \bB_d)$ and $z$ does not belong to $\Sigma_X$.

Finally suppose that $z$ is an isolated point of $Z(\I(X))\cap \bB_d$.
By the Shilov idempotent theorem (see \cite[Corollary~III.6.53]{Gamelin}), there is an element $b \in \A_d/\I(X)$ such that 
\[
 \Gamma(b)(z)=1 \qand \Gamma(b) |_{Z(I(X))\setminus\{z\}} = 0 .
\]
In particular, there is $\phi\in \A_d$ such that 
\[
\phi(z)=1 \qand \phi |_{Z(I(X))\setminus\{z\}} = 0 .
\]
Therefore $z$ belongs to the Shilov boundary.
\end{proof}

Recall also that a \emph{representing measure} for a point $\lambda\in \ol{\bB_d}$ is a regular Borel measure $\mu$ on $\ol{\bB_d}$ satisfying
\[
\phi(\lambda)=\int \phi \,d\mu
\]
for every $\phi\in \A_d$. We show that for non-isolated points of a zero set inside the ball, there always exists a representing measure supported on the part of that zero set lying in the sphere.

\begin{lemma}\label{L:repmeasure}
Let $X\subset\ol{\bB_d}$ and let $\lambda\in Z(\I(X))\cap \bB_d$ be a non-isolated point. 
Then there is a positive representing measure for $\lambda$ that is supported on $Z(\I(X))\cap \bS_d$.
\end{lemma}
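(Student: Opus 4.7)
The plan is to construct the representing measure by extending evaluation at $\lambda$ to a probability measure on $B := Z(\I(X)) \cap \bS_d$ via Hahn--Banach and the Riesz representation theorem. The key input, which is where non-isolation of $\lambda$ is used, is that the values of any $\phi \in \A_d$ at $\lambda$ are controlled by the supremum of $|\phi|$ over $B$.

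The first step is to invoke (as in the proof of Lemma~\ref{L:shilov}) the fact that $Z(\I(X))\cap \bB_d$ is a complex analytic variety, so that through the non-isolated point $\lambda$ there passes an irreducible subvariety $V$ of dimension at least one. Since $V \subset Z(\I(X))$ and $Z(\I(X))$ is closed in $\ol{\bB_d}$, we have $\ol{V}\cap \bS_d \subset B$. The maximum modulus principle then yields
\[
|\phi(\lambda)| \leq \max_{\zeta \in \ol{V}\cap \bS_d}|\phi(\zeta)| \leq \sup_{\zeta\in B}|\phi(\zeta)| \qfor \phi\in \A_d.
\]
In particular, the restriction map $R:\A_d \to \rC(B)$, $\phi \mapsto \phi|_B$, has kernel contained in $\ker \tau_\lambda$, so the formula $\psi(\phi|_B) := \phi(\lambda)$ defines a contractive linear functional on the subspace $R(\A_d) \subset \rC(B)$.

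Next, I would extend $\psi$ by Hahn--Banach to a norm-one functional $\widetilde{\psi}$ on $\rC(B)$, and apply the Riesz representation theorem to obtain a regular Borel measure $\mu$ on $B$ with $\|\mu\| \leq 1$ representing $\widetilde{\psi}$. Since $1 \in R(\A_d)$ and $\psi(1) = 1$, we get $\mu(B) = 1$; combined with $\|\mu\| \leq 1$, this forces $\mu$ to be a positive probability measure. Unwinding definitions gives $\phi(\lambda) = \int \phi \, d\mu$ for every $\phi \in \A_d$, so $\mu$ is the desired representing measure supported on $Z(\I(X))\cap \bS_d$.

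The main obstacle -- and the only place where non-isolation of $\lambda$ is used -- is the maximum modulus estimate, which requires producing a positive-dimensional irreducible subvariety $V$ through $\lambda$ and the fact that $\ol{V}$ meets the sphere. Both geometric facts have already been invoked in the proof of Lemma~\ref{L:shilov}; once they are in hand, the remainder of the argument is a routine application of Hahn--Banach and Riesz representation.
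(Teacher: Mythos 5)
Your proof is correct, and it takes a genuinely different route from the paper's. The paper quotes the general fact that every character of a uniform algebra has a positive representing measure supported on the Shilov boundary $\Sigma_X$, and then uses the idempotent functions produced in Lemma~\ref{L:shilov} to show that $\mu(\{z\})=0$ for each isolated point $z$ of $Z(\I(X))\cap\bB_d$, leaving $\mu$ supported on $Z(\I(X))\cap\bS_d$. You instead bypass $\Sigma_X$ entirely: you extract only the variety-theoretic half of the argument in Lemma~\ref{L:shilov} (a positive-dimensional irreducible component $V$ through the non-isolated point $\lambda$, and the maximum modulus estimate $|\phi(\lambda)|\le \max_{\ol V\cap\bS_d}|\phi|$ of Gunning--Rossi) to conclude directly that evaluation at $\lambda$ is a contractive functional on the restriction algebra $\A_d|_B$ with $B=Z(\I(X))\cap\bS_d$, and then manufacture the measure by Hahn--Banach and Riesz, deducing positivity from $\mu(B)=1=\|\mu\|$. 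This is arguably more elementary: you never need the Shilov idempotent theorem or the ``isolated points in the ball'' part of Lemma~\ref{L:shilov}, nor the abstract theorem on representing measures supported on the Shilov boundary. What the paper's route buys in exchange is that it slots cleanly into the structural picture of $\Sigma_X$ that Lemma~\ref{L:shilov} has already set up and that Theorem~\ref{T:smallestzeroset} also relies on. Both arguments are correct and comparable in length.
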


\begin{proof}
The closure of the Gelfand transform of $\A_d/\I(X)$ can be considered as a uniform algebra on its Shilov boundary $\Sigma_X$. Thus, 
there is a positive representing measure $\mu$ for the non-isolated point $\lambda\in Z(\I(X))\cap \bB_d$ supported on $\Sigma_X$. Let $z\in Z(\I(X))\cap \bB_d$ be an isolated point. In particular, $z\neq \lambda$.  By Lemma \ref{L:shilov}, there is $\phi\in \A_d$ such that $\phi(z)=1$ while $\phi$ vanishes on $Z(\I(X)) \setminus \{z\}$. We find
\[
0=\phi(\lambda)=\int_{\Sigma_X}\phi d\mu= \mu(\{z\})
\]
and thus $z$ does not belong to the support of $\mu$. By Lemma \ref{L:shilov} again, we conclude that $\mu$ must be supported on $Z(\I(X))\cap \bS_d$.
\end{proof}

It turns out that a zero set for $\A_d$ is completely determined by its intersection with the sphere, except for isolated points in the ball. This is the content of the next result. 

\begin{thm}\label{T:smallestzeroset}
Let $X$ be a zero set for $\A_d$ and let $X_0=X\cap\bS_d$. 
Define $\widehat{X_0}$ to be the set consisting of $X_0$ together with all points in $\bB_d$ 
which have a representing measure supported on $X_0$. 
Then, $\widehat{X_0}$ is the smallest zero set for $\A_d$ containing $X_0$, 
and  $X\setminus \widehat{X_0}$ is a countable discrete set.
\end{thm}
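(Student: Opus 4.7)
The plan is to identify $\widehat{X_0}$ with $Z(\I(X_0))$ and check that this is the smallest zero set of $\A_d$ containing $X_0$. Minimality is automatic: for any zero set $Y \supset X_0$ one has $\I(Y) \subset \I(X_0)$, hence $Y = Z(\I(Y)) \supset Z(\I(X_0))$ by \eqref{E:ZI}. Thus the problem reduces to establishing the equality $\widehat{X_0} = Z(\I(X_0))$. The inclusion $\widehat{X_0} \subset Z(\I(X_0))$ is immediate, since any $\lambda \in \bB_d$ with representing measure $\mu$ supported on $X_0$ satisfies $\phi(\lambda) = \int \phi \, d\mu = 0$ for every $\phi \in \I(X_0)$.

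For the reverse inclusion $Z(\I(X_0)) \subset \widehat{X_0}$, I would split according to whether a point lies on $\bS_d$ or in $\bB_d$. The sphere case uses the standing hypothesis that $X$ is itself a zero set: from $\I(X) \subset \I(X_0)$ one deduces $Z(\I(X_0)) \subset Z(\I(X)) = X$, and intersecting with $\bS_d$ gives $Z(\I(X_0)) \cap \bS_d \subset X_0$, the reverse containment being trivial. The ball case hinges on the key observation that $Z(\I(X_0)) \cap \bB_d$ contains no isolated points at all. Indeed, an isolated point $z$ there would, by Lemma \ref{L:shilov}, supply $\phi \in \A_d$ with $\phi(z) = 1$ and $\phi$ vanishing on $Z(\I(X_0)) \setminus \{z\}$; since $z \in \bB_d$ while $X_0 \subset \bS_d$, this forces $\phi|_{X_0} = 0$, placing $\phi$ inside $\I(X_0)$ and contradicting $z \in Z(\I(X_0))$. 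With this obstruction removed, Lemma \ref{L:repmeasure} produces, for every $\lambda \in Z(\I(X_0)) \cap \bB_d$, a positive representing measure supported on $Z(\I(X_0)) \cap \bS_d = X_0$; hence $\lambda \in \widehat{X_0}$.

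It remains to handle $X \setminus \widehat{X_0}$. Since $X_0 \subset \widehat{X_0}$, this difference lies in $X \cap \bB_d$. Applying Lemma \ref{L:repmeasure} directly to $X$ (which is legitimate because $X = Z(\I(X))$), every non-isolated point of $X \cap \bB_d$ admits a representing measure supported on $X \cap \bS_d = X_0$, and so lies in $\widehat{X_0}$. Consequently $X \setminus \widehat{X_0}$ consists solely of points that are isolated in $X \cap \bB_d$; such a set is discrete in $\bB_d$, and since $\bB_d$ is second countable any discrete subset is automatically countable. The central obstacle, and really the heart of the argument, is the ``no isolated points'' step: this is where the standing hypothesis that $X_0$ is cut out from a genuine zero set $X$ gets converted, via the Shilov idempotent theorem packaged in Lemma \ref{L:shilov}, into precisely the analytic input that Lemma \ref{L:repmeasure} needs throughout the open ball.
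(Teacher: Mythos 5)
Your proof is correct and takes essentially the same route as the paper's: it identifies the smallest zero set containing $X_0$ with $Z(\I(X_0))$, rules out isolated points in the ball via the Shilov idempotent mechanism of Lemma \ref{L:shilov}, and then invokes Lemma \ref{L:repmeasure} to produce representing measures supported on $X_0$. One small inaccuracy in your closing remark: the ``no isolated points'' step actually uses only that $X_0\subset\bS_d$, not that $X$ is a zero set; the latter hypothesis enters earlier, when identifying $Z(\I(X_0))\cap\bS_d$ with $X_0$, and again at the very end when applying Lemma \ref{L:repmeasure} to $X$ itself.
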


\begin{proof}
Let $Y$ be the smallest zero set for $\A_d$ containing $X_0$ and put $\B = \A_d/\I(Y)$. Note that $Y\cap \bS_d=X_0$ since $X$ is a zero set.
By Lemma \ref{L:characters}, the character space of $\B$ is canonically identified with 
\[
Z(\I(Y))=Y, 
\]
since $Y$ is a zero set (use (\ref{E:ZI})). 
Moreover, under this identification, the Gelfand transform $\Gamma: \B\to \rC(Y)$ is given as $\Gamma(q(\phi))=\phi|_Y$ for every $\phi\in \A_d$, where $q:\A_d\to \B$ is the quotient map. 

Let $\Sigma\subset Y$ be the Shilov boundary of $\B$. 
By Lemma~\ref{L:shilov}, $\Sigma$ consists of $X_0$ together with any isolated points of $Y\cap \bB_d$.
However if $z$ is an isolated point of $Y\cap \bB_d$, then the same lemma provides a function $\phi \in \A_d$ such that 
\[
 \phi(z)=1 \qand \phi |_{Y\setminus\{z\}} = 0 .
\]
Therefore $Y \cap Z(\phi) = Y\setminus\{z\}$ is a zero set containing $X_0$ that is smaller than $Y$.
This contradiction establishes that $Y$ has no isolated points in the ball.
We conclude that $X_0$ is the Shilov boundary of $\B$. 

In particular, the uniform closure $\C=\ol{\Gamma(\B)}\subset \rC(Y)$ is a uniform algebra with Shilov boundary given by $X_0$. Hence, $\C$ can be considered as a subalgebra of $\rC(X_0)$.
It follows that the evaluation functional at any point of $Y\cap\bB_d$ has a representing measure supported on $X_0$. 
We conclude that $Y\subset \widehat{X_0}$.

Note now that if $\lambda\in \ol{\bB_d}$ has a representing measure supported on $X_0$, then clearly any $\phi\in\A_d$ vanishing on $X_0$ must also satisfy $\phi(\lambda)=0$. Hence $\widehat{X_0}\subset Z(\I(X_0)).$
Since $Y$ is a zero set containing $X_0$, we also have that
\[
\widehat{X_0}\subset Z(\I(X_0))\subset Z(\I(Y))=Y,
\]
so in fact $Y=\widehat{X_0}$ (here again we used (\ref{E:ZI})). Finally, $X \setminus \widehat{X_0}$ must consist of isolated points by Lemma \ref{L:repmeasure}, and thus it is countable and discrete.
\end{proof}

We mention that the proof technique used above applies verbatim in the uniform algebra setting, and appears to be new there as well.
Let $\Omega$ be a bounded, strictly pseudo-convex domain in $\bC^n$ which is the interior of its closure.
Let $B =  \ol{\Omega} \setminus\Omega$.
Consider the Banach algebra $A(\Omega)$ consisting of functions in $\rC(\ol{\Omega})$ which are
analytic on $\Omega$. 
It is known that the character space of $A(\Omega)$ can be identified with $\ol{\Omega}$ via point evaluation.
We can prove the following:

\begin{thm}
Let $X$ be a zero set for $A(\Omega)$ and let $X_0=X\cap B$. 
Define $\widehat{X_0}$ to be the set consisting of $X_0$ together with all points in $\Omega$ 
which have a representing measure supported on $X_0$. 
Then, $\widehat{X_0}$ is the smallest zero set for $A(\Omega)$ containing $X_0$, 
and  $X\setminus \widehat{X_0}$ is a countable discrete set.
\end{thm}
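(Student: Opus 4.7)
The plan is to mimic the proof of Theorem~\ref{T:smallestzeroset} verbatim, checking that each of the three supporting lemmas admits an analogue in the present setting. First I would let $Y$ be the smallest zero set for $A(\Omega)$ containing $X_0$, so $X_0\subseteq Y\subseteq X$, and observe that $Y\cap B = X_0$ because $X\cap B=X_0$. Writing $\B = A(\Omega)/\I(Y)$, the argument of Lemma~\ref{L:characters} applies verbatim: the character space of $\B$ is canonically identified with $Z(\I(Y))=Y$ (using (\ref{E:ZI}) and the hypothesis that the character space of $A(\Omega)$ is $\ol{\Omega}$), and the Gelfand transform $\Gamma:\B\to \rC(Y)$ is given by $\Gamma(q(\phi))=\phi|_Y$, where $q:A(\Omega)\to\B$ is the quotient map.

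The core of the argument is to identify the Shilov boundary $\Sigma$ of $\B$ with $X_0$, which is the analogue of Lemma~\ref{L:shilov}. The inclusion $X_0\subseteq \Sigma$ uses that every point of $B$ is a peak point for $A(\Omega)$, a classical consequence of strict pseudo-convexity: given $\zeta\in X_0$, there is $\phi\in A(\Omega)$ with $\phi(\zeta)=1$ and $|\phi(\lambda)|<1$ for $\lambda\in\ol{\Omega}\setminus\{\zeta\}$, which forces $\zeta\in\Sigma$. For the reverse inclusion, I would argue as in Lemma~\ref{L:shilov}: the set $Y\cap \Omega$ is an analytic subvariety of $\Omega$; a non-isolated point $z$ must lie in an irreducible component of positive dimension whose closure meets $B$, and the maximum modulus principle on analytic varieties then excludes $z$ from $\Sigma$; if instead $z$ were isolated in $Y\cap\Omega$, the Shilov idempotent theorem would produce $\phi\in A(\Omega)$ with $\phi(z)=1$ and $\phi|_{Y\setminus\{z\}}=0$, making $Y\cap Z(\phi)=Y\setminus\{z\}$ a zero set containing $X_0$ strictly smaller than $Y$, contradicting minimality. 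Hence $Y\cap \Omega$ has no isolated points and $\Sigma = X_0$.

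Once $X_0$ is identified as the Shilov boundary, the proof concludes as in Theorem~\ref{T:smallestzeroset}: the uniform closure $\C=\ol{\Gamma(\B)}\subseteq \rC(X_0)$ is a uniform algebra on $X_0$, so evaluation at every $\lambda\in Y\cap \Omega$ has a representing measure supported on $X_0$, giving $Y\subseteq\widehat{X_0}$. The reverse inclusion is immediate, since any point with a representing measure on $X_0$ lies in $Z(\I(X_0))\subseteq Z(\I(Y))=Y$. Thus $Y=\widehat{X_0}$. Finally, $X\setminus \widehat{X_0}\subseteq \Omega$ consists only of isolated points of $X\cap\Omega$ by the analogue of Lemma~\ref{L:repmeasure}: any non-isolated point of $X\cap\Omega$ would have a representing measure supported on $X\cap B=X_0$ (using the Shilov-boundary identification applied to $\B'=A(\Omega)/\I(X)$) and therefore belong to $\widehat{X_0}$. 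Hence $X\setminus\widehat{X_0}$ is countable and discrete.

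The main obstacle is confirming that the three ingredients driving Lemmas~\ref{L:characters}--\ref{L:repmeasure} remain available for $A(\Omega)$: the peak-point property at every point of $B$, the maximum modulus principle for analytic subvarieties of $\Omega$, and the Shilov idempotent theorem. All three are classical in the strictly pseudo-convex category, so the transcription should go through with no essential change; the only mild subtlety is that one must use the stated hypothesis on the character space of $A(\Omega)$ in place of the corresponding (classical) fact for $\AB$.
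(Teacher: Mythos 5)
Your proposal is correct and takes essentially the same route the paper intends: the paper gives no separate proof of this theorem, stating only that the argument for Theorem~\ref{T:smallestzeroset} ``applies verbatim,'' and your write-up is precisely that transcription, with the right hypotheses substituted (character space of $A(\Omega)$ is $\ol\Omega$, boundary points are peak points by strict pseudo-convexity, maximum modulus for positive-dimensional analytic subvarieties of $\Omega$, Shilov idempotent theorem). You have in fact done a bit more than the paper, namely isolating and verifying the three SCV ingredients that make the transcription go through.
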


Looking at the proof, one sees that non-trivial information from several complex variables was used when we apply the Shilov idempotent theorem.
As mentioned in the introduction, this result has implications for analytic varieties, and even algebraic varieties, that we have not found in the literature or by consulting experts in those areas.
In particular, if $V$ is a connected analytic variety that intersects the open ball $\bB_d$, then $V\cap\bB_d$ is determined by $V\cap \bS_d$.
We expected that this would be a classical observation, but so far  we have no reference for this fact  even in the case of an algebraic variety.

As another remark, note that one cannot replace $X_0$ in Theorem \ref{T:smallestzeroset} by an arbitrary closed subset of $\bS_d$å. For example, if $X_0\subset \bT$ is a proper subset with positive Lebesgue measure,
then it is a classical fact that the smallest zero set for $\A_1=\AD$ containing $X_0$ is $\ol{\bD}$, which differs from $ \widehat{X_0}$. It thus appears to be much more difficult to obtain a precise picture describing the smallest zero set for $\A_d$ containing an arbitrary subset $X\subset \bS_d$. One basic fact we can prove in general is the following.

\begin{proposition}\label{P:smallestgeneral}
Let $X\subset \ol{\bB_d}$. Then, $X$ intersects every connected component of $Z(\I(X))$.
\end{proposition}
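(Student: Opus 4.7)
\emph{Plan.} I would like to apply the Shilov idempotent theorem to the commutative Banach algebra $\B := \A_d/\I(X)$. By Lemma~\ref{L:characters}, the character space of $\B$ is canonically identified with $Y := Z(\I(X))$ via point evaluation, and the Gelfand transform $\Gamma : \B \to \rC(Y)$ is given by restriction, $\Gamma(q(\phi)) = \phi|_Y$, where $q : \A_d \to \B$ is the quotient map. Since every function in $\A_d \subset \AB$ is continuous on $\ol{\bB_d}$, we have $\I(X) = \I(\ol X)$ and hence $Y = Z(\I(\ol X))$; so for the purposes of the argument I may as well assume $X$ is closed (if one insists on the literal statement for non-closed $X$, the reasoning below establishes the conclusion for $\ol X$, which is consistent with the proposition since only $\ol X$ is ``seen'' by $\I(X)$).

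Now argue by contradiction: suppose that some connected component $C$ of $Y$ is disjoint from $X$. In the compact Hausdorff space $Y$ the connected component $C$ coincides with its quasi-component, so $C$ equals the intersection of the filtered family $\mathcal U$ of clopen subsets of $Y$ that contain $C$. Since $X$ is a compact subset of $Y$ disjoint from $\bigcap \mathcal U = C$, a routine finite-intersection compactness argument produces some $U \in \mathcal U$ with $X \cap U = \emptyset$. Shilov's idempotent theorem (invoked already in Lemma~\ref{L:shilov}) then furnishes an idempotent $e \in \B$ with $\Gamma(e) = \mathbf 1_U$; lifting to a representative $\phi \in \A_d$ with $q(\phi) = e$ gives $\phi|_Y = \mathbf 1_U$. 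Since $X \subset Y \setminus U$, we have $\phi|_X = 0$, so $\phi \in \I(X)$; but then $\phi$ must vanish on all of $Y = Z(\I(X))$, contradicting $\phi \equiv 1$ on the nonempty set $C \subset U$.

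The conceptual core is the production of the separating clopen set $U$: that rests on the compact-Hausdorff fact that connected components are filtered intersections of clopen sets, combined with the compactness of $X$ in $Y$. The remainder of the argument is essentially a reprise of the Shilov-idempotent technique already used in Lemma~\ref{L:shilov} and Theorem~\ref{T:smallestzeroset}: once separation is in hand, Shilov's theorem produces an element of $\I(X)$ that refuses to vanish on $C$, which is the desired contradiction. I therefore expect the clopen-separation step to be the only place where genuine care is required.
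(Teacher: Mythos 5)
Your proof is correct and follows the same strategy as the paper's: identify the character space of $\A_d/\I(X)$ with $Z(\I(X))$ via Lemma~\ref{L:characters}, use the Shilov idempotent theorem to produce a function $\phi\in\A_d$ vanishing on $X$ yet not on all of $Z(\I(X))$, and contradict the minimality of $Z(\I(X))$ as a zero set containing $X$. The notable difference is that you supply a step the published proof skips over: the paper invokes Shilov's idempotent theorem directly for a connected component of $Z(\I(X))$, but that theorem furnishes idempotents only for \emph{clopen} subsets of the character space, and a connected component of a compact Hausdorff space need not be open (indeed $Z(\I(X))$ could be, say, a totally disconnected $\A_d$-totally null subset of $\bS_d$, all of whose components are non-open singletons). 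Your use of the fact that components coincide with quasi-components in a compact Hausdorff space, together with a finite-intersection compactness argument against the compact set $\ol{X}$, produces a genuinely clopen $U$ containing $C$ and disjoint from $X$; only then does Shilov's theorem apply. This is exactly the right fix, and your argument is, if anything, more complete than the one in the paper. The passage to $\ol{X}$ for the compactness step is harmless, since $\I(X)=\I(\ol{X})$ and the appeal to Lemma~\ref{L:characters} (which is stated for closed subsets) already carries the same tacit reduction.
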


\begin{proof}
By Lemma \ref{L:characters}, we see that the character space of $\A_d/\I(X)$ is $Z(\I(X))$ and that the Gelfand transform is simply the restriction of a function in $\A_d$ to that set.
Assume that there is a connected component $U$ of $Z(\I(X))$ which is disjoint from $X$. 
By the Shilov idempotent theorem, there is a $\phi\in \A_d$ whose restriction to $Z(\I(X))$ coincides with the characteristic function of $U$. 
In particular, we see that $\phi(z)=0$ for every $z\in X$ while $U$ is disjoint from $Z(\phi)$. 
We conclude that $\phi\in \I(X)$, whence
\[
X\subset Z(\phi)\cap Z(\I(X))\subset Z(\I(X))\setminus U .
\]
This is absurd since $Z(\I(X))$ is the smallest zero set containing $X$.
\end{proof}


We close the paper by exploring an issue related to Corollary \ref{C:zerosetunionTN}, and show that given a closed $\A_d$-totally null subset $K\subset\bS_d$ (which is a zero set by Proposition \ref{P:Z(f)}), we can find a countably infinite set $\Lambda\subset \bB_d$ such that $\Lambda\cup K$ is also a zero set for $\A_d$. For this purpose we will use the notion of an \emph{interpolating sequence}.

Recall that $\Lambda = \{z_n\}_n \subset\bB_d$ is an interpolating sequence for $\M_d$ if the restriction map $\rho_\Lambda:\M_d\to \ell^\infty$ defined as
\[
 \rho_\Lambda(\phi) = \phi|_\Lambda =  \{ \phi(z_n) \}_{n} \qfor \phi\in\M_d 
\]
is surjective. This map is obviously contractive and the norm of the inverse of the induced isomorphism of $\M_d/\ker(\rho_\Lambda)$ onto $\ell^\infty$ is called the \emph{interpolation constant}.
Except in the single variable case of the unit disc, there is no known characterization of interpolating sequences for $\M_d$. On the other hand, such sequences are known to be plentiful. For example, \cite[Proposition~9.1]{DHS15} shows that any sequence in $\bB_d$ converging to the boundary contains an interpolating subsequence.

We first establish a result which is of independent interest, which the reader may want to compare with Corollary \ref{C:interpolation}.

\begin{thm} \label{T:interp}
Let $\Lambda=\{z_n\}_n \subset \bB_d$ be an interpolating sequence for $\M_d$ with interpolation constant $\gamma>0$. Let $K\subset\bS_d$ be a closed $\A_d$-totally null subset containing $\ol{\Lambda}\cap\bS_d$.
Then the restriction map $\rho:\A_d \to \rC(K \cup \Lambda)$ is surjective.
\end{thm}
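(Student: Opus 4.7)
The approach is via duality: I will show that the adjoint map $\rho^*\colon M(K \cup \Lambda) \to \A_d^*$ is bounded below, which by the closed range theorem is equivalent to surjectivity of $\rho$. Observe first that $K \cup \Lambda$ is compact in $\ol{\bB_d}$: since $\Lambda$ is interpolating for $\M_d$, it cannot cluster at any point $w \in \bB_d$ (continuity of each $\phi \in \M_d$ at $w$ would otherwise prevent the restriction map from being surjective onto $\linf(\Lambda)$), and its boundary cluster points lie in $K$ by hypothesis. Hence every $\mu \in M(K \cup \Lambda)$ decomposes uniquely as $\mu = \mu_K + \sum_n a_n \delta_{z_n}$ with $\mu_K \in M(K)$ and $(a_n) \in \ell^1$.

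The central step is to recognize that the expression
\[
\rho^*\mu \;=\; \Phi_{\mu_K} + \sum_n a_n \tau_{z_n}
\]
is \emph{exactly} the canonical decomposition of $\rho^*\mu$ across the bands of Theorem~\ref{T:dual}. Indeed, the $\A_d$-total nullity of $K$ places $\mu_K$ inside $\TS_{\A_d}(K) \subset \TS_{\A_d}(\bS_d)$, while each evaluation functional $\tau_{z_n}$ at the interior point $z_n$ lies in $\M_{d*}$ with $\|\tau_{z_n}\|_{\M_{d*}} = 1$, so the series $\sum_n a_n \tau_{z_n}$ converges absolutely in $\M_{d*}$. Theorem~\ref{T:dual} then delivers the exact additivity
\[
\|\rho^*\mu\|_{\A_d^*} \;=\; \Bigl\| \sum_n a_n \tau_{z_n} \Bigr\|_{\M_{d*}} + \|\mu_K\|_{M(K)} .
\]

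To bound the first summand from below, I invoke the interpolating property: for any $(s_n) \in \linf$ with $\|s\|_\infty \leq 1$ there is $\phi \in \M_d$ with $\phi(z_n) = s_n$ and $\|\phi\|_{\M_d} \leq \gamma$. Choosing $s_n = \overline{\operatorname{sgn}(a_n)}$ gives $\|\sum_n a_n \tau_{z_n}\|_{\M_{d*}} \geq \gamma^{-1} \sum_n |a_n|$, and combining yields
\[
\|\rho^*\mu\|_{\A_d^*} \;\geq\; \min\{1,\gamma^{-1}\}\,\|\mu\|_{M(K \cup \Lambda)} .
\]
Hence $\rho^*$ is bounded below and $\rho$ is surjective.

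The main conceptual hurdle is seeing that the $\oplus_1$ band decomposition of $\A_d^*$ splits $\rho^*\mu$ into its interior contribution (the evaluations at $\Lambda$, in the Henkin band $\M_{d*}$) and its boundary contribution (the measure $\mu_K$, pushed into the totally singular band by the $\A_d$-total nullity of $K$). This exact additivity of norms is precisely what welds the two available hypotheses, the interpolation constant for $\Lambda$ and the total nullity of $K$, into a single bounded-below estimate for $\rho^*$.
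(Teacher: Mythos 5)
Your proof is correct, and it takes a genuinely different route from the paper. The paper's argument is constructive: it first shows that the closed $\gamma$-ball of $\I(K)$ has dense image (under $\rho$) in the unit ball of the space of restrictions vanishing on $K$, by interpolating finitely supported sequences with $\M_d$ functions, replacing them with $\I(K)$ functions via Corollary~\ref{C:I(K) dense ball}, and then averaging $M$ such approximants to tame the error; it then converts density to surjectivity via the open-mapping trick in \cite[Lemma 5.9]{CD}, and finally glues in the peak interpolation on $K$ (Theorem~\ref{T:peakinterpolation}) to recover a general $f\in\rC(K\cup\Lambda)$. By contrast, you work entirely on the dual side: you verify that $K\cup\Lambda$ is compact with each $z_n$ isolated (so $M(K\cup\Lambda)=M(K)\oplus_1\ell^1(\Lambda)$), identify $\rho^*\mu=\sum_n a_n\tau_{z_n}+\Phi_{\mu_K}$ as precisely the canonical splitting of Theorem~\ref{T:dual} because $\A_d$-total nullity of $K$ forces $\mu_K$ into $\TS_{\A_d}(\bS_d)$ while each $\tau_{z_n}\in\M_{d*}$, and then read off
\[
\|\rho^*\mu\|_{\A_d^*}=\Bigl\|\sum_n a_n\tau_{z_n}\Bigr\|_{\M_{d*}}+\|\mu_K\|\geq \gamma^{-1}\sum_n|a_n|+\|\mu_K\|\geq \gamma^{-1}\|\mu\| ,
\]
with the first inequality from the interpolation hypothesis. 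The closed range theorem then yields surjectivity. This is cleaner, avoids the averaging argument and the appeal to \cite[Lemma 5.9]{CD} altogether, and incidentally improves the bound on the inverse of the induced isomorphism $\A_d/\I(K\cup\Lambda)\to\rC(K\cup\Lambda)$ from the paper's $2\gamma+1$ to $\gamma$. The one step worth making fully explicit in a writeup is that each $z_n$ is indeed isolated in $K\cup\Lambda$ (no subsequence of $\Lambda$ can converge to another $z_m\in\bB_d$ without destroying surjectivity of $\rho_\Lambda$ onto $\ell^\infty$), which is what licenses the unique decomposition $\mu=\mu_K+\sum_n a_n\delta_{z_n}$ and the additivity of total variation.
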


\begin{proof}
We first show that $\rho$ takes the closed ball of radius $\gamma$ in $\I(K)$ onto a dense subset of the closed unit ball of 
\[
\{f\in \rC(K\cup \Lambda): f|_K=0\}.
\]
Note that since $\ol{\Lambda}\cap \bS_d\subset K$, we can identify elements of that space with sequences in $c_0(\Lambda)$.
Fix $\ep>0$, $N\geq 1$ and $\Ba = \{a_n\}_n \in c_0(\Lambda)$ such that $\|\Ba\|\leq 1$ and $a_n=0$ for $n>N$. Since $\Lambda$ is an interpolating sequence, there is $\psi\in\M_d$ with $\|\psi\|\leq  \gamma$ such that $\psi|_\Lambda = \Ba$. 

Because evaluation at a point in the open unit ball is a weak-$*$ continuous functional on $\M_d$, we may use Corollary~\ref{C:I(K) dense ball} to find $\phi_1\in \I(K)$ such that $\|\phi_1\|\leq \gamma$ and 
\[
|\phi_1(z_n)-a_n|=|\phi_1(z_n)-\psi(z_n)|<\ep/2 \qfor 1\leq n\leq N.
\]
Put $N_1=N$. Note that since $\phi_1\in \I(K)$ and $\ol{\Lambda}\cap \bS_d\subset K$, we must have
\[
 \lim_{n\to\infty} \phi_1(z_n) = 0 .
\]
In particular, there is $N_2>N_1$ such that
\[
|\phi_1(z_n)-a_n|=|\phi_1(z_n)|<\ep/2 \qfor n>N_2.
\]
Proceeding recursively, we obtain a sequence of functions $\{\phi_k\}_k \subset \I(K)$ and a strictly increasing sequence of positive integers $\{N_k\}_k$ such that $\|\phi_k\|\leq \gamma$ and
\[
|\phi_k(z_n)-a_n|<\ep/2 \qfor 1\leq n\leq N_k \qand n>N_{k+1}
\]
for each $k\geq 1$.

Now select a positive integer $M$ so large that $2\gamma< M\ep$.
Define
\[
 \phi = \frac 1 M \sum_{k=1}^M \phi_k .
\]
It is clear that $\phi \in \I(K)$ and $\|\phi\| \leq  \gamma$. Moreover
\[ |\phi(z_n) - a_n| < \frac\ep 2 \qfor 1 \le n \le N \qand n>N_{M+1}\]
because this is valid for each $\phi_k, 1\leq k \leq M$. We claim that this inequality also holds for $N<n\leq N_{M+1}$. Indeed, in that case there is $1\leq j\leq M$ such that $N_j<n\leq N_{j+1}$. Note that if $k\leq j-1$, then $n>N_{k+1}$, so that
\[
|\phi_k(z_n)|=|\phi_k(z_n)-a_n|<\ep/2 ;
\]
while if $k\geq j+1$, then $n\leq N_{k}$, and
\[
|\phi_k(z_n)|=|\phi_k(z_n)-a_n|<\ep/2
\]
as well. Therefore we can estimate for $N<n <N_{M+1}$ that
\[
|\phi(z_n)|\leq \frac{1}{M}\sum_{k\neq j}|\phi_k(z_n)|+\frac{|\phi_j(z_n)|}{M} < \frac\ep 2 + \frac{\gamma}{M} < \ep .
\]
Hence $\| \rho(\phi) - \Ba \| < \ep$. Since $\ep$, $N$ and $\Ba$ were arbitrary, this establishes our claim that the image under $\rho$ of the closed ball of radius $\gamma$ in $\I(K)$ is dense in the closed unit ball of 
\[
\{f\in \rC(K\cup \Lambda): f|_K=0\}.
\]

In turn, arguing as in \cite[Lemma 5.9]{CD} we see that the image under $\rho$ of the open ball of radius $\gamma$ in $\I(K)$ contains the open unit ball of the space above. Moreover, by Theorem~\ref{T:peakinterpolation} (or the more elementary result \cite[Proposition~5.10]{CD}), the restriction of the open unit ball of $\A_d$ to $K$ coincides with the open unit ball of $\rC(K)$.
Fix  now $f \in\rC(K \cup \Lambda)$ with $\|f\| < 1$. Pick $\theta \in\A_d$ such that $\|\theta\| < 1$ and $\theta|_K = f|_K$.
Then $g = f - \theta|_{K\cup \Lambda}\in \rC(K\cup \Lambda)$ vanishes on $K$ and $\|g\| < 2$. Choose $\phi\in\I(K)$ with $\|\phi\| < 2\gamma$ so that $\phi|_{K\cup\Lambda} = g$. Evidently we have $\rho(\theta+\phi)=f$ and we conclude that $\rho$ is surjective.
\end{proof}

Since $\ker \rho=\I(K\cup \Lambda)$, one consequence of this result is that $\A_d/\I(K\cup \Lambda)$ is isomorphic to $\rC(K\cup \Lambda)$. The proof actually shows that the inverse of the isomorphism has norm at most $2\gamma+1$. We also single out a nice consequence for zero sets.

\begin{cor}\label{C:zero interp}
If $\Lambda\subset \bB_d$ is an interpolating sequence for $\M_d$ and $K\subset \bS_d$ is a closed $\A_d$-totally null subset containing $\ol{\Lambda}\cap\bS_d$, then $K \cup \Lambda$ is a zero set for $\A_d$.
\end{cor}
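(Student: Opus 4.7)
My plan is to use Theorem \ref{T:interp} together with Lemma \ref{L:characters} to identify the character space $Z(\I(K \cup \Lambda))$ directly with $K \cup \Lambda$. First I would verify that $K \cup \Lambda$ is closed in $\ol{\bB_d}$, hence compact. The point is that since $\Lambda$ is an interpolating sequence for $\M_d$, it can have no accumulation point in the open ball $\bB_d$: any such accumulation point $w$ would give a weak-$*$ continuous functional $\tau_w$ on $\M_d$ that imposes a nontrivial linear constraint on the image of $\rho_\Lambda$, contradicting surjectivity onto $\ltwo$\ldots actually onto $\linf$. Consequently every limit point of $\Lambda$ in $\ol{\bB_d}$ lies in $\bS_d$, and by hypothesis in $K$, so $\ol{\Lambda} \subset K \cup \Lambda$, making the union closed.

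To show $K \cup \Lambda$ is a zero set, it suffices to prove that $Z(\I(K \cup \Lambda)) \subset K \cup \Lambda$, since the reverse inclusion is automatic from (\ref{E:ZI}). Given $\lambda \in Z(\I(K \cup \Lambda))$, the evaluation functional $\tau_\lambda$ on $\A_d$ annihilates $\I(K \cup \Lambda)$ and so descends to a character $\chi_\lambda$ of the quotient algebra $\A_d/\I(K \cup \Lambda)$. By Theorem \ref{T:interp}, the restriction map $\rho: \A_d \to \rC(K \cup \Lambda)$ is surjective, and its kernel is clearly $\I(K \cup \Lambda)$; hence $\rho$ descends to a bijective algebra homomorphism
\[
\bar\rho: \A_d/\I(K \cup \Lambda) \to \rC(K \cup \Lambda).
\]
Transporting $\chi_\lambda$ across $\bar\rho^{-1}$ yields a character of $\rC(K \cup \Lambda)$, which (since $K \cup \Lambda$ is compact Hausdorff) must be evaluation at some point $z \in K \cup \Lambda$. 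Therefore $\phi(\lambda) = \phi(z)$ for every $\phi \in \A_d$. As the polynomials lie in $\A_d$ and separate points of $\ol{\bB_d}$, we conclude that $\lambda = z \in K \cup \Lambda$, completing the proof.

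The main input is Theorem \ref{T:interp}, which has already been established; given that, the present corollary is essentially a formal consequence of Gelfand theory via Lemma \ref{L:characters}. The only mildly subtle point is the compactness of $K \cup \Lambda$, which rests on the standard observation that an interpolating sequence for $\M_d$ has no accumulation points in the open ball. I anticipate no real obstacle beyond carefully invoking the right identifications.
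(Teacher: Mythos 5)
Your proposal is correct and follows essentially the same route as the paper, combining Theorem \ref{T:interp} and Lemma \ref{L:characters} via Gelfand theory. You also usefully make explicit two details the paper leaves implicit: that $K\cup\Lambda$ is compact (since an interpolating sequence for $\M_d$ has no accumulation points in $\bB_d$, so $\ol{\Lambda}\subset K\cup\Lambda$), and the separation-of-points argument behind the paper's ``easily verified'' final step.
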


\begin{proof}
By Lemma~\ref{L:characters}, the character space of $\A_d/\I(K\cup \Lambda)$ is naturally identified with $Z(\I(K\cup \Lambda))$.  On the other hand, we have that $\A_d/\I(K\cup \Lambda)$ and  $\rC(K \cup \Lambda)$ are isomorphic as Banach algebras via the restriction map by virtue of Theorem~\ref{T:interp}.  It is easily verified that this implies that 
\[
Z(\I(K\cup \Lambda))=K\cup \Lambda. \qedhere
\]
\end{proof}

\bibliographystyle{amsplain}

\end{document}